\definecolor{darkgreen}{rgb}{0.0,0,0.9}
\DeclareMathAlphabet{\mathpzc}{OT1}{pzc}{m}{it}
\newtheorem{propo}{Proposition}[section]
\newtheorem{lemma}[propo]{Lemma}
\newtheorem{coro}[propo]{Corollary}
\newtheorem{thm}[propo]{Theorem}
\newtheorem{remark}[propo]{Remark}
\newtheorem{claim}[propo]{Claim}
\def\tX{\widetilde{X}}
\def\hS{\widehat{S}}
\def\ed{\stackrel{\rm d}{=}}
\def\tX{\widetilde{X}}
\def\cE{{\cal E}}
\def\reals{{\mathbb R}}
\def\prob{{\mathbb P}}
\def\E{{\mathbb E}}
\def\Var{{\rm Var}}
\def\L0{{L_0}}
\def\<{\langle}
\def\>{\rangle}
\def\bZ{{\mathbf Z}}
\def\bX{{\mathbf X}}
\def\hr{\widehat{r}}
\def\htheta{\widehat{\theta}}
\def\hSigma{\widehat{\Sigma}}
\def\supp{{\rm supp}}
\def\F{{\sf F}}
\def\F{{\sf F}}
\def\normal{{\sf N}}
\def\tX{{\tilde{X}}}
\def\sT{{\sf T}}
\def\id{{\rm I}}
\def\sign{{\rm sign}}
\def\hu{\widehat{u}}
\def\smax{\sigma_{\max}}
\def\smin{\sigma_{\min}}
\def\event{\mathcal{E}}
\def\term{\mathcal{T}}
\def\v*{v_0}
\def\T*{T_0}
\def\xistar{\xi_0}
\def\u*{u_0}
\def\F*{F_0}
\def\thetaGL{\htheta^{{\rm GL}}}
\def\thetaZN{\htheta^{{\rm ZN}}}
\definecolor{olivegreen}{rgb}{0,0.6,0.4}
\def\hkappa{\widehat{\kappa}}
\newcommand{\ajcomment}[1]{}
\newcommand{\deq}{\stackrel{\text{\rm d}}{=}}
\newcommand{\labitem}[2]{%
\def\@itemlabel{\text{#1}}
\item
\def\@currentlabel{#1}\label{#2}}
\title{Model Selection for High-Dimensional Regression under the Generalized Irrepresentability Condition}
\author{Adel Javanmard
            \footnote{Department of Electrical Engineering, Stanford University. Email: \url{adelj@stanford.edu} }
             \,and Andrea~Montanari 
            \footnote{Department of Electrical Engineering and Department of Statistics, Stanford University. Email: \url{montanar@stanford.edu}}
            }
\begin{document}

\maketitle

\begin{abstract}
In the high-dimensional regression model a response
variable is linearly related to $p$ covariates, but the sample size
$n$ is smaller than $p$. We assume that only a small subset of
covariates is `active' (i.e., the corresponding coefficients are
non-zero), and consider the model-selection problem of identifying the
active covariates.

A popular approach is to estimate the regression coefficients through
the Lasso ($\ell_1$-regularized least squares). This is known to
correctly identify the active set only if the irrelevant covariates are roughly orthogonal to
the relevant ones, as quantified through the so called
`irrepresentability' condition. In this paper we study the
`Gauss-Lasso' selector, a simple two-stage method that first solves
the Lasso, and then performs ordinary least squares restricted to the
Lasso active set. 

We formulate `generalized irrepresentability condition' (GIC), an assumption that is substantially weaker than 
irrepresentability. We prove that,
under GIC, the Gauss-Lasso correctly recovers the active set. 
\end{abstract}

\begingroup
\hypersetup{linktocpage}
\tableofcontents
\endgroup
\section{Introduction}

In linear regression, we wish to estimate an unknown but fixed vector of parameters $\theta_0 \in \reals^p$
from $n$ pairs $(Y_1,X_1), (Y_2,X_2), \dotsc, (Y_n,X_n)$, with vectors
$X_i$ taking values in $\reals^p$ and response variables $Y_i$ given by
\begin{eqnarray}\label{eqn:regression}
Y_i \,=\, \<\theta_0,X_i\> + W_i\, ,\;\;\;\;\;\;\;\; W_i\sim
\normal(0,\sigma^2)\, ,
\end{eqnarray}
where $\<\,\cdot\,,\,\cdot\,\>$ is the standard scalar product. 

In matrix form,
letting  $Y = (Y_1,\dotsc,Y_n)^\sT$ and denoting by $\bX$ the design matrix with
rows $X_1^\sT,\dotsc, X_n^\sT$, we have
\begin{eqnarray}\label{eq:NoisyModel}
Y\, =\, \bX\,\theta_0+ W\, ,\;\;\;\;\;\;\;\; W\sim
\normal(0,\sigma^2 \id_{n\times n})\, .
\end{eqnarray}
In this paper, we consider the high-dimensional setting in which the number of
parameters exceeds the sample size, i.e., $p > n$, but the number of
non-zero entries of $\theta_0$ is smaller than $p$.
We denote by $S \equiv \supp(\theta_0)\subseteq[p]$ the support of
$\theta_0$, and let $s_0\equiv |S|$. We are interested in the `model
selection' problem, namely in the problem of identifying $S$ from data
$Y$, $\bX$. 

In words, there exists a `true' low dimensional linear model that explains the
data. We want to identify the set $S$ of  covariates that are
`active' within this model.
This problem has motivated a large body of research, because
of its relevance to several modern data analysis tasks, ranging from
signal processing \cite{Donoho1,CandesFourier} to genomics \cite{peng2010regularized,shevade2003simple}. 
A crucial step forward has been the
development of model-selection techniques based on convex optimization
formulations \cite{Tibs96,BP95,Dantzig}. These formulations have lead to computationally efficient algorithms  that
can be applied to large scale problems. Such developments pose the following theoretical question: 
\emph{For which vectors $\theta_0$, designs $\bX$, and noise levels $\sigma$,
the support $S$ can be identified, with high probability, through
computationally efficient procedures?} The same question
can be asked for random designs $\bX$ and, in this case, `high probability'
will refer both to the noise realization $W$, and to the design
realization $\bX$. In the rest of this introduction we shall focus
--for the sake of simplicity-- on the deterministic settings, and
refer to Section \ref{sec:RandomDesigns} for a treatment of Gaussian
random designs.

The analysis of computationally efficient methods has largely focused on
$\ell_1$-regularized least squares, a.k.a.  the Lasso \cite{Tibs96}.
The Lasso estimator is defined by 
\begin{align}
\htheta^{n}(Y,\bX;\lambda) \equiv \arg\min_{\theta\in\reals^p}
\Big\{\frac{1}{2n}\|Y-\bX\theta\|^2_2+\lambda\|\theta\|_1\Big\}\, . \label{eq:LassoEstimator}
\end{align}
In case the right hand side has more than one minimizer, one of them
can be selected arbitrarily for our purposes.
We will often omit the arguments $Y$, $\bX$, as they are clear from
the context. (A closely related method is the so-called Dantzig
selector \cite{Dantzig}: it would be interesting to explore whether our
results can be generalized to that approach.)

It was understood early on that, even in the large-sample, low-dimensional limit
$n\to\infty$ at $p$ constant, $\supp(\htheta^n)\neq S$ unless the
columns of $\bX$ with index in $S$ are roughly orthogonal to the ones
with index outside $S$ \cite{knight2000asymptotics}. This assumption is formalized by the so-called
`irrepresentability condition', that can be stated in
terms of the empirical covariance matrix $\hSigma =  (\bX^\sT \bX/n)$.
Letting $\hSigma_{A,B}$ be the submatrix $(\hSigma_{i,j})_{i\in
  A,j\in B}$, irrepresentability requires
\begin{align}
\|\hSigma_{S^c,S}\hSigma_{S,S}^{-1}\,\sign(\theta_{0,S})\|_{\infty} \le 1-\eta\, ,\label{eq:IRR}
\end{align}
for some $\eta> 0$
 (here $\sign(u)_i =+1$,
$0$, $-1$ if, respectively, $u_i>0$, $=0$, $<0$). In an early breakthrough, Zhao and Yu \cite{zhao}
proved that, if this condition holds with $\eta$ uniformly bounded
away from $0$, it  guarantees correct model selection
also in the high-dimensional regime $p\gg n$. Meinshausen and
B\"ulmann \cite{MeinshausenBuhlmann} independently established the
same result for random Gaussian designs, with applications to learning
Gaussian graphical models. These papers applied to very sparse
models, requiring in particular $s_0 = O(n^c)$, $c<1$, and
parameter vectors with large coefficients. Namely, scaling the columns of $X$
such that $\hSigma_{i,i}\le 1$, for $i\in [p]$, they require
$\theta_{\rm min}\equiv \min_{i\in S}|\theta_{0,i}|\ge c\sqrt{s_0/n}$.

Wainwright \cite{Wainwright2009LASSO} strengthened considerably these
results by allowing for general scalings of $s_0,p,n$ and proving that
much smaller non-zero coefficients can be detected. Namely, he proved that for a broad class of empirical covariances
it is
only necessary that $\theta_{\rm min}\ge c\sigma\sqrt{(\log p)/n}$.
This scaling of the minimum non-zero entry is optimal up to
constants. Also, for a specific classes of random Gaussian designs
(including $\bX$ with i.i.d. standard Gaussian entries), the analysis
of \cite{Wainwright2009LASSO} provides tight bounds on the minimum
sample size for correct model selection. Namely, there exists
$c_{\ell}, c_u>0$ such that the Lasso fails with high probability if
$n<c_\ell \, s_0\log p$ and succeeds with high probability if $n\ge
c_{u}\, s_0\log p$. 

While, thanks to these recent works \cite{zhao,MeinshausenBuhlmann,Wainwright2009LASSO}, 
we understand reasonably well  model selection via the
Lasso, it is fundamentally unknown what model-selection performances
can be achieved with general computationally practical methods. Two
aspects of of the above theory cannot be improved substantially: $(i)$ The non-zero
entries must satisfy the condition $\theta_{\rm min}\ge
c\sigma/\sqrt{n}$ to be detected with high probability.
Even if $n=p$ and the measurement directions $X_i$ are orthogonal,
e.g., $\bX = \sqrt{n} \id_{n\times n}$, one would need $|\theta_{0,i}| \ge c\sigma/\sqrt{n}$ to distinguish
the $i$-th entry from noise. For instance, in~\cite{javanmard2013hypothesis}, 
the present authors prove a general upper bound on the minimax power
of tests for hypotheses $H_{0,i}= \{\theta_{0,i} = 0\}$.
Specializing this bound to the case of standard Gaussian designs, the
analysis of \cite{javanmard2013hypothesis}
shows formally that no test can detect $\theta_{0,i} \neq 0$, with a fixed degree of confidence, unless 
$|\theta_{0,i}| \ge c \sigma/\sqrt{n}$. 
$(ii)$ The sample size must satisfy $n\ge s_0$. Indeed, if this is
not the case, for each $\theta_0$ with support of size
$|S|=s_0$, there is a one parameter family $\{\theta_0(t)=\theta_0+t\,
v\}_{t\in\reals}$ with $\supp(\theta_0(t))\subseteq S$, $\bX\theta_0(t) = \bX\theta_0$ and, for
specific values of $t$, the support of $\theta_0(t)$ is strictly
contained in $S$.

On the other hand, there is no fundamental reason to assume the
irrepresentability condition (\ref{eq:IRR}). This follows  from the
requirement that a specific method (the Lasso) succeeds, but is
unclear why it should be necessary in general.  
The situation is very different for estimation consistency, e.g., for
characterizing the $\ell_2$ error $\|\htheta-\theta_0\|_2$. In that case
the restricted isometry property (RIP) \cite{CandesTao} (or one of its relaxations
\cite{BickelEtAl,BuhlmannVanDeGeer}) is sufficient and --essentially-- necessary.

In this paper we prove that the \emph{Gauss-Lasso selector} has
nearly optimal model selection properties under a condition that is
strictly weaker than irrepresentability. We call this condition the
\emph{generalized irrepresentability condition} (GIC). 
The Gauss-Lasso procedure uses the Lasso estimator to estimate a first
model $T\subseteq\{1,\dots,p\}$. It then constructs
a new estimator by ordinary least squares regression of 
the data $Y$ onto the model $T$.

\begin{algorithm}[t]
\caption*{{\sc Gauss-Lasso selector}:  Model selector for high dimensional problems}
\begin{algorithmic}[1]

\REQUIRE Measurement vector $y$, design model $\bX$, regularization parameter $\lambda$, support size $s_0$. 

\ENSURE Estimated support $\hS$.

\STATE Let $T =\supp(\htheta^{n})$ be the support of Lasso estimator $\htheta^{n} = \htheta^{n}(y,\bX,\lambda)$
 given by
 \[
 \htheta^{n}(Y,\bX;\lambda) \equiv \arg\min_{\theta\in\reals^p}
\Big\{\frac{1}{2n}\|Y-\bX\theta\|^2_2+\lambda\|\theta\|_1\Big\}\, . 
\]

\STATE Construct the estimator $\thetaGL$ as follows:
\[
\thetaGL_{T} = (\bX_T^\sT \bX_T)^{-1} \bX_T^\sT y\,,  \quad \quad \thetaGL_{T^c} = 0\,.
\]

\STATE Find $s_0$-{th} largest entry (in modulus) of $\thetaGL_{T}$,
denoted by $\thetaGL_{(s_0)}$, and let
\[
\hS \equiv \big\{i\in [p]:\; |\thetaGL_i|\ge |\thetaGL_{(s_0)}| \big\}.
\]
\end{algorithmic}
\end{algorithm}

We prove that the estimated model is, with high probability, correct
(i.e., $\hS=S$) under conditions comparable to  the ones assumed in
\cite{MeinshausenBuhlmann,zhao,Wainwright2009LASSO},
while replacing irrepresentability by the weaker generalized
irrepresentability condition.
In the case of random Gaussian designs, our analysis further assumes
the restricted eigenvalue property in order to establish a nearly
optimal scaling of the sample size $n$ with the sparsity parameter $s_0$.

In order to build some intuition about the difference between
irrepresentability and generalized irrepresentability, it is
convenient to consider the Lasso cost function at `zero noise':
\begin{align*}
G(\theta;\xi) &\equiv
\frac{1}{2n}\|\bX(\theta-\theta_0)\|_2^2+\xi\|\theta\|_1\\
&=\frac{1}{2}\<(\theta-\theta_0),\hSigma(\theta-\theta_0)\>
+\xi\|\theta\|_1\,. 
\end{align*}
Let $\thetaZN(\xi)$ be the minimizer of $G(\,\cdot\,;\xi)$ and 
$v \equiv \lim_{\xi\to 0+}\sign(\thetaZN(\xi))$. The limit is well
defined by Lemma \ref{lem:LimitXi0-DET} below.
The KKT conditions for $\thetaZN$ imply, for $T\equiv\supp(v)$, 
\begin{align*}
\|\hSigma_{T^c,T}\hSigma_{T,T}^{-1}v_T\|_{\infty} \le 1\, .
\end{align*}
Since $G(\,\cdot\,;\xi)$ has always at least one minimizer, this condition
is \emph{always satisfied}.
Generalized irrepresentability requires that the above inequality
holds with some small slack $\eta>0$ bounded away from zero, i.e.,
\begin{align*}
\|\hSigma_{T^c,T}\hSigma_{T,T}^{-1}v_T\|_{\infty} \le 1-\eta\, .
\end{align*}
Notice that this assumption reduces to standard irrepresentability
cf. Eq.~(\ref{eq:IRR})  if, in addition, we ask that $v =
\sign(\theta_0)$. 
In other words, earlier work
  \cite{MeinshausenBuhlmann,zhao,Wainwright2009LASSO} required generalized irrepresentability
\emph{plus} sign-consistency in zero noise, and established sign
consistency in non-zero noise. In this paper the former condition is shown to
be sufficient.

From a different point of view, GIC demands that 
irrepresentability holds for a superset of the true support $S$. It
was indeed argued in the literature that such a relaxation of
irrepresentability allows to cover a significantly broader set of
cases (see for instance \cite[Section 7.7.6]{buhlmann2011statistics}). 
However, it was never clarified why such a superset irrepresentability
condition should be significantly more general than simple
irrepresentability. Further, no precise prescription existed for the
superset of the true support.

Our contributions can therefore be summarized as follows:
\begin{enumerate}
\item  By tying it to the KKT condition for the zero-noise problem, 
we justify the expectation that generalized irrepresentability should
hold for a broad class of design matrices.
\item We thus provide a specific formulation of superset
  irrepresentability, prescribing both the superset $T$ and the sign
  vector $v_T$,
that is --by itself-- significantly more general than simple irrepresentability.
\item We show that, under GIC, exact support recovery can be
  guaranteed using the Gauss-Lasso, and formulate the appropriate
  `minimum coefficient' conditions that guarantee this.
\end{enumerate}
As a side remark, even when simple irrepresentability holds,
our results strengthen somewhat the estimates of
\cite{Wainwright2009LASSO} (see below for details).

\vspace{0.2cm}

The paper is organized as follows. In the rest of the introduction we
illustrate the range of applicability of GIC through a simple example and
we discuss further related work. We finally introduce the basic
notations to be used throughout the paper.

Section \ref{sec:Deterministic} treats the case of deterministic
designs $\bX$, and develops our main results on the basis of the
GIC. Section \ref{sec:RandomDesigns} extends our analysis to the case
of random designs. In this case GIC is required to hold for the
population covariance, and the analysis is more technical as it
requires to control the randomness of the design matrix.
The proofs of our main results can be found in Sections
\ref{proof:two-thm} and \ref{proof:LassoSuppRecovery},
with several technical steps deferred to the Appendices.

\subsection{An example}
\label{subsec:example}

In order to illustrate the range of new cases covered by our results, 
it is instructive to consider a simple
example. A detailed discussion of this calculation can be found in
Appendix \ref{app:Example}.
The example corresponds to a Gaussian random design, i.e., the rows 
$X_1^{\sT}$, \dots $X_n^{\sT}$ are i.i.d. realizations of a $p$-variate normal
distribution with mean zero. We write
$X_i=(X_{i,1},X_{i,2},\dots,X_{i,p})^{\sT}$ for the components of $X_i$.
The response variable is linearly related to the first $s_0$
covariates
\begin{align*}
Y_i =
\theta_{0,1}X_{i,1}+\theta_{0,2}X_{i,2}+\cdots+\theta_{0,s_0}X_{i,s_0}
+ W_i\, ,
\end{align*}
where $W_i\sim \normal(0,\sigma^2)$ and we assume $\theta_{0,i}>0$ for all $i\le
s_0$.  In particular $S=\{1,\dots,s_0\}$.

As for the design matrix, first $p-1$ covariates are orthogonal at the
population level,
i.e., $X_{i,j}\sim\normal(0,1)$ are independent for $1\le j\le p-1$
(and $1\le i \le n$). However the $p$-th covariate is correlated to
the $s_0$ relevant ones:
\begin{align*}
X_{i,p} = a\, X_{i,1}+a\,X_{i,2}+\dots+a\, X_{i,s_0} +
b\, \tX_{i,p}\, .
\end{align*}
Here $\tX_{i,p}\sim\normal(0,1)$ is independent from
$\{X_{i,1},\dots,X_{i,p-1}\}$ and represents the orthogonal component
of the $p$-th covariate. We choose the  coefficients $a,b\ge 0$ such that
$s_0a^2+b^2=1$, whence $\E\{X_{i,p}^2\}=1$ and hence the $p$-th
covariate is normalized as the first $(p-1)$ ones.
In other words,  the rows of $\bX$ are i.i.d. Gaussian
$X_i\sim\normal(0,\Sigma)$ with covariance given by
\begin{align*}
\Sigma_{ij} = \begin{cases}
1 & \mbox{if $i=j$},\\
a & \mbox{if $i=p, j\in S$ or $i\in S, j=p$,}\\
0 & \mbox{otherwise.}
\end{cases}
\end{align*}

For $a=0$, this is the standard i.i.d. design and irrepresentability
holds. The Lasso correctly recovers the support $S$ from $n\ge c\,
s_0\log p$ samples, provided $\theta_{\rm min}\ge c'\sqrt{(\log
  p)/n}$. It follows from \cite{Wainwright2009LASSO} that this remains
true as long as $a\le (1-\eta)/s_0$ for some $\eta>0$ bounded away
from $0$. However, as soon as $a>1/s_0$, the Lasso includes the
$p$-th covariate in the estimated model, with high probability (see Appendix
\ref{app:Example}). 

As it is shown in Appendix \ref{app:Example}, the Gauss-Lasso is successful
for a significantly larger set of values of $a$. Namely, if
\begin{align*}  
a\in
\left[0,\frac{1-\eta}{s_0}\right]\cup
\left(\frac{1}{s_0},\frac{1-\eta}{\sqrt{s_0}}\right]\, ,
\end{align*}
then it recovers $S$ from $n\ge c\,
s_0\log p$ samples, provided $\theta_{\rm min}\ge c'\sqrt{(\log
  p)/n}$. While the interval $((1-\eta)/s_0,1/s_0]$ is not
covered by this result, we expect this to be due to the proof
technique rather than to an intrinsic limitation of the Gauss-Lasso selector.
 
\subsection{Further related work}

The restricted isometry property \cite{CandesTao,Dantzig} (or the related
restricted eigenvalue \cite{BickelEtAl} or compatibility conditions~\cite{BuhlmannVanDeGeer})
have been used to establish guarantees on the estimation and model selection errors of the Lasso or similar
approaches. In particular, Bickel, Ritov and Tsybakov
\cite{BickelEtAl} show that, under such conditions, with high probability,
\begin{align*}
\|\htheta-\theta_0\|^2_2\le C \sigma^2 \frac{s_0 \log p}{n}\, .
\end{align*}

The same conditions can be used to prove model-selection guarantees.
In particular, Zhou \cite{Zhou-threshold} studies a multi-step
thresholding procedure whose first steps coincide with the
Gauss-Lasso. While the main objective of this work is to prove
high-dimensional $\ell_2$ consistency with a sparse estimated model, the author
also proves partial model selection guarantees. Namely, the method
correctly recovers a subset of large coefficients $S_L\subseteq S$,
provided  $|\theta_{0,i}|\ge c\sigma\sqrt{s_0(\log p)/n}$, for $i\in
S_L$.  This means that the coefficients that are guaranteed to be
detected must be a factor $\sqrt{s_0}$ larger than what is required by
our results. 

Also related to model selection is the recent line of work on
hypothesis testing in high-dimensional regression
\cite{ZhangZhangSignificance,BuhlmannSignificance}. These papers
propose methods for testing hypotheses of the form $H_{0,i}= \{\theta_{0,i}=0\}$.
In order to achieve a given significance level, they require --again--
large coefficients, namely $|\theta_{0,i}|\ge c\sigma\sqrt{s_0(\log
  p)/n}$ (see \cite{javanmard2013hypothesis} for a discussion of this point). 
In \cite{javanmard2013hypothesis}, we investigate a
hypothesis testing method that achieves any given significance level $\alpha$ for
$|\theta_{0,i}|\ge c\sigma/\sqrt{n}$, with $c$ a constant that depends on $\alpha$. 
Although the testing procedure can be used for general setting, the guarantee
on its statistical power is provided only for some random Gaussian designs in an asymptotic sense. 
A very recent paper by van de Geer, B{\"u}hlmann and Ritov
\cite{GBR-hypothesis} proposes a similar procedure and gives conditions
 under which the procedure achieves the semiparametric efficiency bound. 
Their analysis allows for general Gaussian and sub-Gaussian designs.
However, it requires a sample size
$n\ge C (s_0\log p)^2$, namely the square of the optimal sample size.

Let us finally mention that an alternative approach to establishing
model-selection guarantees assumes a suitable mutual incoherence conditions. Lounici
\cite{lounici2008sup} proves correct model selection under the
assumption $\max_{i\neq j}|\hSigma_{ij}| = O(1/s_0)$.  This assumption
is however stronger than irrepresentability \cite{BuhlmannVanDeGeer}.
Cand\'es and Plan \cite{candes2009near} also assume mutual
incoherence, albeit with a much weaker requirement, namely $\max_{i\neq
  j}|\hSigma_{ij}| = O(1/(\log p))$. Under this condition, they
establish model selection guarantees for an ideal scaling of the
non-zero coefficients $\theta_{\rm min}\ge c\sigma\sqrt{(\log
  p)/n}$. However, this result only holds with high probability for a
`random signal model' in which the non-zero coefficients $\theta_{0,i}$
have uniformly random signs.

Finally, model selection consistency can be obtained without
irrepresentability through other methods. For instance
\cite{zou2006adaptive} develops the adaptive Lasso,  using
a data-dependent weighted $\ell_1$ regularization, and
\cite{bach2008bolasso} proposes the Bolasso, a resampling-based
techniques. Unfortunately, both of these approaches are only
guaranteed to succeed in the low-dimensional regime of $p$ fixed, and $n\to\infty$.

\subsection{Notations}
We provide a brief summary of the notations used throughout the paper.
For a matrix $A$ and set of indices $I,J$, we let $A_{J}$ denote the submatrix
containing just the columns in $J$ and $A_{I,J}$ denote the submatrix formed 
by the rows in $I$ and columns in $J$. Likewise, for a vector $v$, $v_I$ is the restriction
of $v$ to indices in $I$. Further, the notation $A^{-1}_{I,I}$ represents the inverse of $A_{I,I}$, i.e., $A^{-1}_{I,I} = (A_{I,I})^{-1}$.
The maximum and the minimum singular values of $A$ are respectively denoted 
by $\sigma_{\max}(A)$ and $\sigma_{\min}(A)$.
We write $\|v\|_p$ for the standard $\ell_p$ norm of a vector $v$.
Specifically, $\|v\|_0$ denotes the number of nonzero entries in $v$. Also, $\|A\|_p$ refers
to the induced operator norm on a matrix $A$. We use $e_i$ to refer to the $i$-th standard basis element, e.g., 
$e_1 = (1,0,\dotsc,0)$.
For a vector $v$, $\supp(v)$ represents
the positions of nonzero entries of $v$. Throughout, we denote the rows of the design matrix $\bX$ by $X_1,\dotsc,X_n \in \reals^p$ and
denote its columns by $x_1,\dotsc,x_p \in \reals^n$. 
Further, for a vector $v$,
$\sign(v)$ is the vector with 
entries $\sign(v)_i = +1$ if $v_{i}>0$, $\sign(v)_i=-1$ if
$v_{i}<0$, and $\sign(v)_i=0$ otherwise. 
\section{Deterministic designs}
\label{sec:Deterministic}

An outline of this section is given below:
\begin{enumerate}
\item We first consider the zero-noise problem $W = 0$,
and prove several useful properties
of the Lasso estimator in this case. 
In particular, we show that there exists a threshold for the regularization parameter
below which the support of the Lasso estimator remains the same and contains $\supp(\theta_0)$.
Moreover, the Lasso estimator support is not much larger than $\supp(\theta_0)$.

\item We then turn to the noisy problem, and introduce the \emph{generalized
irrepresentability condition} (GIC)  that is motivated by the properties of the Lasso in the zero-noise case. We prove that under GIC  (and other technical conditions), with high probability, the signed support of the Lasso estimator is the same as that in the  zero-noise problem.

\item We show that the Gauss-Lasso selector correctly
recovers the signed support of $\theta_0$.

\end{enumerate}
%

%
 \subsection{Zero-noise problem}
 \label{sec:zero-noise}
Recall that $\hSigma \equiv (\bX^\sT \bX/n)$ denotes the empirical
covariance of the rows of the design matrix. 
Given $\hSigma \in \reals^{p\times p}$, $\hSigma \succeq 0$, $\theta_0 \in \reals^p$ and $\xi \in \reals_+$, we define the \emph{zero-noise Lasso estimator}
as 
\begin{align}
\thetaZN(\xi) \equiv \arg\min_{\theta\in\reals^p}
\Big\{\frac{1}{2n}\<(\theta-\theta_0), \hSigma(\theta-\theta_0)\> + \xi\|\theta\|_1\Big\}\, . \label{eq:ZNLassoEstimator}
\end{align}
Note that $\thetaZN(\xi)$ is obtained by letting $Y = \bX\theta_0$ in the definition of $\htheta^n(Y,\bX;\xi)$.

Following \cite{BickelEtAl}, we introduce a restricted
eigenvalue constant for the empirical covariance matrix $\hSigma$:
\begin{align}\label{eq:RE-DET}
\hkappa(s,c_0) \equiv \min_{\substack{J\subseteq [p]\\ |J|\le s}} 
\min_{\substack{u\in\reals^p\\ \|u_{J^c}\|_1\le c_0\|u_J\|_1}}\,
\frac{\<u,\hSigma u\>}{\|u\|^2_2}\, .
\end{align}

Our first result states that the support of $\thetaZN(\xi)$ is not
much larger than the support of $\theta_0$, for any $\xi>0$.
\begin{lemma}\label{lem:Tsize-DET}
Let $\thetaZN =\thetaZN(\xi)$ be defined as per
Eq.~(\ref{eq:NinftyProblem}), with $\xi>0$. Then, if $s_0 = \|\theta_0\|_0$,
\begin{align}\label{eqn:TsizeB-DET}
\|\thetaZN\|_0\le \left(1+\frac{4\|\hSigma\|_2}{\hkappa(s_0,1)}\right)\, s_0\, .
\end{align} 
\end{lemma}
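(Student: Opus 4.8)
The plan is to combine the stationarity (KKT) conditions satisfied by $\thetaZN$ on its own support with the restricted eigenvalue constant $\hkappa(s_0,1)$. Write $h\equiv\thetaZN-\theta_0$ and $T\equiv\supp(\thetaZN)$. Since the objective in (\ref{eq:ZNLassoEstimator}) is convex, any minimizer obeys the subgradient condition $\frac1n\hSigma h+\xi g=0$ for some $g\in\partial\|\thetaZN\|_1$; in particular, for every coordinate $i\in T$ one has $[\hSigma h]_i=-n\xi\,\sign(\thetaZN_i)$, so that $|[\hSigma h]_i|=n\xi$ exactly on the support. This identity is what will convert a cardinality count on $T$ into a quadratic-form estimate.

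First I would establish the cone condition for $h$. Comparing the cost at $\thetaZN$ and at $\theta_0$ gives the basic inequality $\frac{1}{2n}\<h,\hSigma h\>\le\xi(\|\theta_0\|_1-\|\thetaZN\|_1)$, and the triangle inequality applied on $S$ yields $\|\theta_0\|_1-\|\thetaZN\|_1\le\|h_S\|_1-\|h_{S^c}\|_1$. Because the left-hand side of the basic inequality is nonnegative, this forces $\|h_{S^c}\|_1\le\|h_S\|_1$, i.e. $h$ lies in the cone with $c_0=1$ and $|S|=s_0$. Hence the definition (\ref{eq:RE-DET}) applies and $\<h,\hSigma h\>\ge\hkappa(s_0,1)\|h\|_2^2$. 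Feeding this back into the basic inequality together with $\|h_S\|_1\le\sqrt{s_0}\,\|h\|_2$ produces the two bounds I need: $\|h\|_2\le 2n\xi\sqrt{s_0}/\hkappa(s_0,1)$ and, consequently, $\<h,\hSigma h\>\le 4n^2\xi^2 s_0/\hkappa(s_0,1)$.

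The key step — and the one yielding the clean $1+\cdots$ form rather than a squared factor — is to split $|T|=|T\cap S|+|T\setminus S|\le s_0+|T\setminus S|$ and to bound only the excess support $|T\setminus S|$ through the stationarity identity. Let $\sigma\in\reals^p$ be supported on $T\setminus S$ with $\sigma_i=\sign(\thetaZN_i)$ there; then $\<\sigma,\hSigma h\>=-n\xi\,|T\setminus S|$, while Cauchy–Schwarz in the $\hSigma$-semi-inner product gives $|\<\sigma,\hSigma h\>|\le\sqrt{\<\sigma,\hSigma\sigma\>}\,\sqrt{\<h,\hSigma h\>}\le\sqrt{\|\hSigma\|_2\,|T\setminus S|}\,\sqrt{\<h,\hSigma h\>}$, using $\<\sigma,\hSigma\sigma\>\le\|\hSigma\|_2\|\sigma\|_2^2=\|\hSigma\|_2\,|T\setminus S|$. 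Squaring, cancelling one factor of $|T\setminus S|$, and inserting the bound on $\<h,\hSigma h\>$ gives $|T\setminus S|\le 4\|\hSigma\|_2 s_0/\hkappa(s_0,1)$, whence the claim (\ref{eqn:TsizeB-DET}).

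I expect the delicate points to be bookkeeping rather than conceptual: tracking the normalization $n\xi$ (which ultimately cancels), and applying Cauchy–Schwarz in the $\hSigma$-semi-inner product so that the operator norm $\|\hSigma\|_2$ appears to the first power rather than squared. The decision to restrict the stationarity identity to $T\setminus S$ (instead of all of $T$) is precisely what avoids a lossy $\|\hSigma\|_2^2/\hkappa^2$ estimate and recovers the stated constant, so that is the step I would be most careful to set up correctly. Positivity of $\hkappa(s_0,1)$ is assumed implicitly, since otherwise the bound is vacuous.
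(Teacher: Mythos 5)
Your proof is correct and follows essentially the same route as the paper's: the basic inequality gives the cone condition, the restricted eigenvalue gives the quadratic-form bound $\<h,\hSigma h\>\le 4n^2\xi^2 s_0/\hkappa(s_0,1)$, and the KKT identity restricted to $T\setminus S$ converts this into a count of the excess support with $\|\hSigma\|_2$ appearing to the first power. The only cosmetic difference is in that last step: the paper sums the squared stationarity identities, bounding $|T\setminus S|\le \|\hSigma\hu\|_2^2\le \|\hSigma\|_2\<\hu,\hSigma\hu\>$, whereas you pair with a sign vector and apply Cauchy--Schwarz in the $\hSigma$-semi-inner product; the two variants yield identical constants.
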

The proof of this lemma is deferred to Section~\ref{proof:Tsize-DET}.

\begin{lemma}\label{lem:LimitXi0-DET}
Let $\thetaZN =\thetaZN(\xi)$ be defined as per
Eq.~(\ref{eq:ZNLassoEstimator}), with $\xi>0$. Then there exist $\xi_0 = \xi_0(\hSigma, S, \theta_0) > 0$,
$\T* \subseteq [p]$, $v_0 \in \{-1,0,+1\}^p$, such that 
the following happens. For all $\xi\in (0,\xistar)$,
$\sign(\thetaZN(\xi)) = \v*$ and $\supp(\thetaZN(\xi))
=\supp(\v*) = \T*$.
Further $\T*\supseteq S$, $v_{0,S} = \sign(\theta_{0,S})$ and
$\xi_0 = \min_{i\in S} |\theta_{0,i} / [\hSigma_{\T*,\T*}^{-1}v_{0,\T*}]_i |$.
\end{lemma}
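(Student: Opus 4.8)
The plan is to analyze the minimizer of the strictly convex (on the relevant subspace) objective $G(\theta;\xi)$ via its KKT conditions and track how the solution depends on $\xi$ as $\xi \to 0^+$. First I would write the stationarity conditions: a vector $\thetaZN(\xi)$ minimizes $G(\,\cdot\,;\xi)$ if and only if there exists a subgradient $g \in \partial\|\thetaZN\|_1$ with $\hSigma(\thetaZN - \theta_0) + \xi\, g = 0$, where $g_i = \sign(\thetaZN_i)$ on the support and $g_i \in [-1,1]$ off it. By Lemma~\ref{lem:Tsize-DET} the support stays of size at most $C s_0$ uniformly in $\xi$, so the solution path lives in a bounded union of low-dimensional faces. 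The key structural observation is that as $\xi \to 0^+$, the minimizer converges to $\theta_0$ itself (since $G(\theta_0;\xi) = \xi\|\theta_0\|_1 \to 0$ and the quadratic term is nonnegative), which forces the active coefficients to stabilize in sign.

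The main argument I would run is a \emph{piecewise-linearity / finiteness} argument on the solution path. Restricted to a fixed candidate signed support $(T, v_T)$, the KKT stationarity equation on $T$ reads $\hSigma_{T,T}(\thetaZN_T - \theta_{0,T}) - \hSigma_{T,S\setminus T}\theta_{0,S\setminus T} + \xi\, v_T = 0$ (using $\theta_{0,T^c}=0$ appropriately), which solves to an affine function of $\xi$ whenever $\hSigma_{T,T}$ is invertible; invertibility follows from the restricted eigenvalue constant $\hkappa$ being positive on supports of bounded size. Because there are only finitely many possible signed supports $(T, v)$ with $|T| \le C s_0$, and on each one the candidate solution is affine in $\xi$, the true solution path is piecewise affine with finitely many breakpoints. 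Hence there is a smallest positive breakpoint, below which the signed support is constant; call this constant signed support $v_0$ with $\supp(v_0) = \T*$, and set $\xistar$ to be that breakpoint. This gives $\sign(\thetaZN(\xi)) = v_0$ and $\supp(\thetaZN(\xi)) = \T*$ for all $\xi \in (0,\xistar)$.

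To pin down the remaining claims, I would use that $\thetaZN(\xi) \to \theta_0$ as $\xi \to 0^+$ together with continuity. Since $\theta_{0,i} \neq 0$ for $i \in S$, the convergence forces $\thetaZN_i(\xi)$ to have the sign of $\theta_{0,i}$ for all small $\xi$, giving $\T* \supseteq S$ and $v_{0,S} = \sign(\theta_{0,S})$. On the active set $\T*$ the KKT equation becomes, after substituting $\theta_{0,T^c \cap S} = 0$ and simplifying, $\thetaZN_{\T*}(\xi) = \theta_{0,\T*} - \xi\, \hSigma_{\T*,\T*}^{-1} v_{0,\T*}$. The explicit formula for $\xistar$ then emerges as the largest $\xi$ for which this affine expression still has the correct signs on every coordinate of $S$: for $i \in S$ we need $\theta_{0,i} - \xi\,[\hSigma_{\T*,\T*}^{-1}v_{0,\T*}]_i$ to retain the sign of $\theta_{0,i}$, which caps $\xi$ at $\min_{i \in S} |\theta_{0,i} / [\hSigma_{\T*,\T*}^{-1}v_{0,\T*}]_i|$; checking that the off-support subgradient inequality and the sign constraints on $\T* \setminus S$ do not bind earlier yields the stated value of $\xistar$.

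The hard part will be the careful bookkeeping that makes the piecewise-linear argument rigorous: one must verify that the signed support can only \emph{change} at the finitely many breakpoints (so that between breakpoints the affine candidate really is the global minimizer and its sign pattern is genuinely constant), and that as $\xi \to 0^+$ one cannot have infinitely many sign changes accumulating at $0$. This is handled by the finiteness of admissible signed supports combined with uniqueness of the minimizer on each face (guaranteed by positivity of $\hkappa$ on the restricted cone), but the matching of subgradient feasibility across adjacent affine pieces, and confirming that the formula for $\xistar$ is determined by the $S$-coordinates rather than some coordinate in $\T* \setminus S$, requires attention. The remaining claim that $\xistar$ is strictly positive and depends only on $(\hSigma, S, \theta_0)$ follows once $v_0$ and $\T*$ are shown to be intrinsic limits independent of the particular sequence $\xi \to 0^+$.
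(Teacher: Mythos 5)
You take a genuinely different route from the paper. Yours is the homotopy/LARS-path argument: finitely many candidate signed supports, on each of which the stationarity system makes the candidate solution affine in $\xi$, so the validity region of each signed support is an interval and the path has finitely many breakpoints; the signed support is then frozen below the smallest positive breakpoint, and the KKT system on $T_0$ yields the formula for $\xi_0$. The paper avoids all of this bookkeeping with a single change of variables: writing $\thetaZN(\xi)=\theta_0+\xi\,\hu(\xi)$, the rescaled objective satisfies $F(u;\xi)\ge F_0(u)$ for every $u$, where $F_0(u)=\frac{1}{2}\langle u,\hSigma u\rangle+\|u_{S^c}\|_1+\langle\sign(\theta_{0,S}),u_S\rangle$ does not depend on $\xi$, with equality precisely when $\xi\le\min_{i\in S}|\theta_{0,i}/u_i|$. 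Hence the minimizer $u_0$ of $F_0$ is also the minimizer of $F(\,\cdot\,;\xi)$ for every $\xi\in(0,\xi_0)$ with $\xi_0=\min_{i\in S}|\theta_{0,i}/u_{0,i}|$, and $\thetaZN(\xi)=\theta_0+\xi u_0$ on that whole range. This gives for free the three things you must verify by hand: that the path is affine near zero, that no sign changes accumulate at $0$, and that the first breakpoint is controlled by the $S$-coordinates only (in your language: the sign constraints on $T_0\setminus S$ and the dual feasibility constraints on $T_0^c$ are homogeneous in $\xi$, hence never bind).

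There is, however, one genuine gap in your argument as written: the claim that $\thetaZN(\xi)\to\theta_0$ because $G(\theta_0;\xi)=\xi\|\theta_0\|_1\to 0$ and the quadratic term is nonnegative. That reasoning only gives $\langle(\thetaZN-\theta_0),\hSigma(\thetaZN-\theta_0)\rangle\to 0$, i.e., $\bX\thetaZN\to\bX\theta_0$. In the regime of interest here $p>n$, so $\hSigma$ is singular and this does not force $\thetaZN\to\theta_0$; the $\xi\to 0^+$ limit of the Lasso path is a minimum-$\ell_1$-norm solution of $\bX\theta=\bX\theta_0$, which can differ from $\theta_0$ without an identifiability assumption. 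Since you use this convergence to obtain $T_0\supseteq S$ and $v_{0,S}=\sign(\theta_{0,S})$ --- two of the lemma's main conclusions --- the step must be repaired. A repair consistent with the paper's toolkit: optimality gives $\|\thetaZN\|_1\le\|\theta_0\|_1$, hence $u=\thetaZN-\theta_0$ satisfies the cone condition $\|u_{S^c}\|_1\le\|u_S\|_1$, and $\hkappa(s_0,1)>0$ (the restricted eigenvalue already invoked in Lemma~\ref{lem:Tsize-DET}) then converts the vanishing quadratic form into $\|u\|_2\to 0$; alternatively, read off $v_{0,S}=\sign(\theta_{0,S})$ directly from the explicit affine formula, as the paper does. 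Relatedly, your assertion that invertibility of $\hSigma_{T,T}$ along the path follows from positivity of $\hkappa$ is only automatic for $|T|\le 2s_0$; for the larger supports permitted by Lemma~\ref{lem:Tsize-DET} it is an additional regularity assumption --- one the paper also makes implicitly when it writes $\hSigma_{T_0,T_0}^{-1}$.
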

Proof of Lemma~\ref{lem:LimitXi0-DET} can be found in Section~\ref{proof:LimitXi0-DET}.

Finally we have the following standard characterization of the
solution of the zero-noise problem.
\begin{lemma}\label{lem:ZN-supp}
Let $\thetaZN =\thetaZN(\xi)$ be defined as per
Eq.~(\ref{eq:ZNLassoEstimator}), with $\xi>0$.
Let $T\supseteq S$ and $v\in \{+1,0,-1\}^p$ be such that $\supp(v) =
T$. Then  $\sign(\thetaZN)=v$ if and only if
\begin{align}
&\Big\|\hSigma_{T^c,T}\hSigma_{T,T}^{-1}v_T\Big\|_{\infty}\le
1\, ,\label{eq:thetaZNvT1}\\
v_T &= \sign
\Big(\theta_{0,T}-\xi\hSigma_{T,T}^{-1}v_T\Big)\,.\label{eq:thetaZNvT2}
\end{align}
Further, if the above holds,
$\thetaZN$ is given by $\thetaZN_{T^c}=0$ and
\begin{align*}
\thetaZN_T & =
\theta_{0,T}-\xi\hSigma_{T,T}^{-1}v_T\, .
\end{align*}
\end{lemma}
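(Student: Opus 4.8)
The statement is the standard subgradient (KKT) characterization of a Lasso-type optimum, specialized to the zero-noise objective, so the plan is to write down the first-order optimality condition and match its two coordinate blocks (on $T$ and on $T^c$) against the two displayed conditions. Since the objective in \eqref{eq:ZNLassoEstimator} is convex, $\thetaZN$ is a minimizer if and only if there is a subgradient $g\in\partial\|\thetaZN\|_1$ with $\hSigma(\thetaZN-\theta_0)+\xi\, g=0$, where $g_i=\sign(\thetaZN_i)$ whenever $\thetaZN_i\neq 0$ and $g_i\in[-1,1]$ otherwise. The key structural observation, used throughout, is that because $T\supseteq S$ we have $\theta_{0,T^c}=0$, so once we impose $\supp(\thetaZN)=T$ the vector $\thetaZN-\theta_0$ is supported on $T$ and equals $\thetaZN_T-\theta_{0,T}$ there; consequently $[\hSigma(\thetaZN-\theta_0)]_T=\hSigma_{T,T}(\thetaZN_T-\theta_{0,T})$ and $[\hSigma(\thetaZN-\theta_0)]_{T^c}=\hSigma_{T^c,T}(\thetaZN_T-\theta_{0,T})$.

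For the sufficiency direction I would start from the two conditions and exhibit an explicit minimizer. Define $\theta^\star$ by $\theta^\star_{T^c}=0$ and $\theta^\star_T=\theta_{0,T}-\xi\,\hSigma_{T,T}^{-1}v_T$, and choose the candidate subgradient $g_T=v_T$, $g_{T^c}=\hSigma_{T^c,T}\hSigma_{T,T}^{-1}v_T$. Plugging in, the $T$-block of the stationarity equation reduces to $-\xi v_T+\xi g_T=0$, while the $T^c$-block reduces to $-\xi\,\hSigma_{T^c,T}\hSigma_{T,T}^{-1}v_T+\xi g_{T^c}=0$, so both hold by construction. It then remains to check that $g$ is a legitimate subgradient: condition \eqref{eq:thetaZNvT2} guarantees $\sign(\theta^\star_T)=v_T$, so on $T$ (where $\supp(v)=T$ forces $v_T$ to have no zero entries) we indeed have $g_T=v_T=\sign(\theta^\star_T)$ and $\theta^\star$ is nonzero with the prescribed signs, while condition \eqref{eq:thetaZNvT1} is exactly $\|g_{T^c}\|_{\infty}\le 1$, which legitimizes $g_{T^c}$ at the coordinates where $\theta^\star=0$. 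Hence $\theta^\star$ is a minimizer with $\sign(\theta^\star)=v$ and the claimed closed form.

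For the necessity direction I would assume $\sign(\thetaZN)=v$, hence $\supp(\thetaZN)=T$, and read off the optimality condition. On $T$ the subgradient must satisfy $g_T=\sign(\thetaZN_T)=v_T$, so the $T$-block gives $\hSigma_{T,T}(\thetaZN_T-\theta_{0,T})+\xi v_T=0$; solving yields the closed form $\thetaZN_T=\theta_{0,T}-\xi\,\hSigma_{T,T}^{-1}v_T$, and taking signs recovers \eqref{eq:thetaZNvT2}. Substituting this into the $T^c$-block gives $g_{T^c}=\hSigma_{T^c,T}\hSigma_{T,T}^{-1}v_T$, and the subgradient constraint $\|g_{T^c}\|_{\infty}\le 1$ is precisely \eqref{eq:thetaZNvT1}.

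The main thing needing care is invertibility of $\hSigma_{T,T}$, on which both the closed form and the derivations above rely. This is where the restricted eigenvalue constant \eqref{eq:RE-DET} enters: as long as $|T|$ is controlled (guaranteed in our setting by Lemma~\ref{lem:Tsize-DET}), positivity of $\hkappa$ forces $\hSigma_{T,T}\succ 0$, since any $u$ supported on $T$ trivially satisfies the cone constraint in \eqref{eq:RE-DET}. A second, more delicate point is the precise meaning of ``$\sign(\thetaZN)=v$'' when the minimizer need not be unique: all minimizers of \eqref{eq:ZNLassoEstimator} share the same value of $\hSigma(\thetaZN-\theta_0)$ (the objective depends on $\theta$ only through $\bX\theta$), hence the same subgradient $g$, so the sign pattern derived above is common to every minimizer and the equivalence is unambiguous. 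I expect the bookkeeping of these two issues, rather than the KKT matching itself, to be the only real obstacle.
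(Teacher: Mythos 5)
Your proposal is correct and takes essentially the same route as the paper's proof: write the zero-subgradient condition $\hSigma(\thetaZN-\theta_0)=-\xi u$, use $T\supseteq S$ (hence $\theta_{0,T^c}=0$) to split it into the $T$ and $T^c$ blocks, solve the $T$-block to obtain the closed form and conditions \eqref{eq:thetaZNvT1}--\eqref{eq:thetaZNvT2} for necessity, and for sufficiency exhibit the candidate $\tilde\theta$ together with the dual vector $u_T=v_T$, $u_{T^c}=\hSigma_{T^c,T}\hSigma_{T,T}^{-1}v_T$ and verify the subgradient conditions. Your two supplementary remarks (invertibility of $\hSigma_{T,T}$ and non-uniqueness of the minimizer) concern points the paper silently assumes rather than proves, so they do not change the comparison, though your claim that a common subgradient forces a common sign pattern across all minimizers is not fully justified as stated.
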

Lemma~\ref{lem:ZN-supp} is proved in Appendix~\ref{app:ZN-supp}.

Motivated by this result, we introduce the \emph{generalized irrepresentability condition} (GIC) for deterministic designs.
\begin{itemize}
\item[] {\bf Generalized irrepresentability (deterministic designs).} The pair
$(\hSigma,\theta_0)$, $\hSigma\in\reals^{p\times p}$, $\theta_0\in
\reals^p$ satisfy the generalized irrepresentability condition with
parameter $\eta>0$ if the following happens.  Let 
$\v* $, $\T*$ be defined as per Lemma \ref{lem:LimitXi0-DET}. Then 
\begin{align}
\Big\|\hSigma_{\T*^c,\T*}\hSigma_{\T*,\T*}^{-1} v_{0,\T*}\Big\|_{\infty}
&\le 1-\eta\, .\label{eq:GIC-DET}
\end{align}
\end{itemize}
In other words we require the dual feasibility condition
(\ref{eq:thetaZNvT1}) --which always holds-- to hold with a positive
slack $\eta$.

\subsection{Noisy problem}
Consider the noisy linear observation model as described in~\eqref{eq:NoisyModel}, and let $\hr \equiv (\bX^\sT W/n)$.
We begin with a standard characterization of $\sign(\htheta^{n})$, the signed support of the Lasso estimator~\eqref{eq:LassoEstimator}.

\begin{lemma}\label{lem:supp-lasso-DET}
Let $\htheta^n = \htheta^n(y,\bX;\lambda)$ be defined as per
Eq.~(\ref{eq:LassoEstimator}), and let $z\in\{+1,0,-1\}^p$ with $\supp(z) = T$. Further assume $T\supseteq
S$. Then the signed support of the Lasso estimator is given by $\sign(\htheta^n)=z$ if and only if 
\begin{align}
\Big\|
\hSigma_{T^c,T}&\hSigma_{T,T}^{-1}z_T + \frac{1}{\lambda} (\hr_{T^c} - \hSigma_{T^c,T}\hSigma_{T,T}^{-1}\hr_{T})
\Big\|_{\infty}\le
1\, , \label{eq:z1-DET}\\
&z_T = \sign\Big(\theta_{0,T} - \hSigma^{-1}_{T,T} (\lambda z_T - \hr_T) \Big)\,.\label{eq:z2-DET}
\end{align}
\end{lemma}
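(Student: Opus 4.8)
The plan is to run the standard primal--dual (KKT) / ``witness'' argument for the Lasso, adapted to the present notation. Since the objective in \eqref{eq:LassoEstimator} is convex, a vector $\htheta^n$ is a minimizer if and only if $0$ lies in its subdifferential, i.e. there is a subgradient $g\in\partial\|\htheta^n\|_1$ with $\frac{1}{n}\bX^\sT(\bX\htheta^n-Y)+\lambda g=0$. Substituting $Y=\bX\theta_0+W$ from \eqref{eq:NoisyModel} and recalling $\hSigma=\bX^\sT\bX/n$ and $\hr=\bX^\sT W/n$, this stationarity condition becomes
\[
\hSigma(\htheta^n-\theta_0)-\hr+\lambda g=0,\qquad g_i=\sign(\htheta^n_i)\ \text{if}\ \htheta^n_i\neq0,\quad g_i\in[-1,1]\ \text{if}\ \htheta^n_i=0.
\]
This single identity is what I would manipulate throughout, splitting it into its $T$- and $T^c$-blocks.

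For necessity, suppose $\sign(\htheta^n)=z$ with $\supp(z)=T\supseteq S$. Then $\htheta^n_{T^c}=0$, and since $S\subseteq T$ we also have $\theta_{0,T^c}=0$, so the deviation $\htheta^n-\theta_0$ is supported on $T$ and $\hSigma(\htheta^n-\theta_0)=\hSigma_{\cdot,T}(\htheta^n_T-\theta_{0,T})$. On $T$, where $g_T=z_T$, the block reads $\hSigma_{T,T}(\htheta^n_T-\theta_{0,T})-\hr_T+\lambda z_T=0$; assuming $\hSigma_{T,T}$ is invertible (which holds since $|T|\le n$ and the restricted-eigenvalue condition keeps the active block nondegenerate), this solves to $\htheta^n_T=\theta_{0,T}-\hSigma_{T,T}^{-1}(\lambda z_T-\hr_T)$, and imposing $\sign(\htheta^n_T)=z_T$ gives exactly \eqref{eq:z2-DET}. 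On $T^c$, the block determines the subgradient $g_{T^c}=\frac{1}{\lambda}\big(\hr_{T^c}-\hSigma_{T^c,T}(\htheta^n_T-\theta_{0,T})\big)$; substituting $\htheta^n_T-\theta_{0,T}=\hSigma_{T,T}^{-1}(\hr_T-\lambda z_T)$ and collecting terms yields $g_{T^c}=\hSigma_{T^c,T}\hSigma_{T,T}^{-1}z_T+\frac{1}{\lambda}(\hr_{T^c}-\hSigma_{T^c,T}\hSigma_{T,T}^{-1}\hr_T)$, so that the dual-feasibility requirement $\|g_{T^c}\|_\infty\le1$ is precisely \eqref{eq:z1-DET}.

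For sufficiency I would reverse the construction: given \eqref{eq:z1-DET}--\eqref{eq:z2-DET}, define the candidate by $\htheta^n_{T^c}=0$ and $\htheta^n_T=\theta_{0,T}-\hSigma_{T,T}^{-1}(\lambda z_T-\hr_T)$, and define $g$ by $g_T=z_T$ together with the formula above on $T^c$. Condition \eqref{eq:z2-DET} guarantees $\sign(\htheta^n_T)=z_T$, so $g_T=\sign(\htheta^n_T)$ is a valid subgradient entry on $T$; condition \eqref{eq:z1-DET} gives $\|g_{T^c}\|_\infty\le1$, which is admissible at the zero coordinates; and by construction the stationarity identity holds. Hence $g\in\partial\|\htheta^n\|_1$, and by convexity $\htheta^n$ is a global minimizer of \eqref{eq:LassoEstimator} with $\sign(\htheta^n)=z$.

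The only genuinely delicate point is the uniqueness and consistency of the signed support in the high-dimensional regime, where $\hSigma$ is singular and \eqref{eq:LassoEstimator} may admit several minimizers. The argument above produces a minimizer with the claimed signed support, and I expect uniqueness on $T$ to follow from invertibility of $\hSigma_{T,T}$, so that no minimizer can place mass off $T$ without violating stationarity. Making this fully rigorous --- in particular ruling out a distinct minimizer supported on a different set --- is the step that requires the most care, and it is where I would lean on the restricted-eigenvalue / active-block nondegeneracy already available in this section.
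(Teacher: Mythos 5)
Your proof is correct and follows essentially the same route as the paper's own: the zero-subgradient (KKT) condition, substitution of $Y=\bX\theta_0+W$ to introduce $\hSigma$ and $\hr$, block decomposition over $T$ and $T^c$ for necessity, and the explicit primal-dual witness construction for sufficiency. The uniqueness subtlety you flag at the end is genuine (with the non-strict inequality in \eqref{eq:z1-DET} one cannot rule out other minimizers off $T$), but the paper's proof does not resolve it either --- it simply asserts that the constructed minimizer $\tilde{\theta}$ equals the selected $\htheta^n$ --- so your argument matches the paper's approach and level of rigor.
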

Lemma~\ref{lem:supp-lasso-DET} is proved in Appendix~\ref{app:supp-lasso-DET}.

\begin{thm}\label{thm:LassoSuppRecovery-DET}
Consider the deterministic design model with empirical covariance matrix $\hSigma \equiv (\bX^\sT\bX)/n$,
and assume that $\hSigma_{i,i} \le 1$ for $i\in[p]$.
Let $\T*\subseteq [p]$, $\v*\in\{+1,0,-1\}^p$ be the 
 set and vector defined in Lemma~\ref{lem:LimitXi0-DET}, and $t_0 \equiv |\T*|$.
 Assume that 
\begin{itemize}
\labitem{(i)}{Condition:Minsigma-DET} We have $\sigma_{\min}(\hSigma_{\T*,\T*}) \ge C_{\min} > 0$. 
\item[(ii)] The pair $(\hSigma,\theta_0)$ satisfies the generalized
  irrepresentability condition with parameter $\eta$.
\end{itemize}
Consider the Lasso estimator $\htheta^n = \htheta^n(y,\bX;\lambda)$ defined as per
Eq.~(\ref{eq:LassoEstimator}), with regularization parameter
\begin{align}
\lambda = \frac{\sigma}{\eta} \sqrt{\frac{2 c_1 \log p}{n}}\,,\label{eq:lambda_val-DET}
\end{align}
for some constant $c_1 > 1$, and suppose that
\begin{enumerate}
\labitem{(iii)}{Condition:ThetaMin-DET} For some $c_2>0$:
\begin{align}
|\theta_{0,i}|\ge  c_2\lambda+ \lambda\big|[\hSigma_{\T*,\T*}^{-1}v_{0,\T*}]_i\big|\;\;\;\;\;\;\; &\mbox{ for
  all }i\in S,\label{eq:ConditionTheta1-DET}\\
\big|[\hSigma_{\T*,\T*}^{-1}v_{0,\T*}]_i \big|\ge c_2 \;\;\;\;\;\;\; &\mbox{ for
  all }i\in \T*\setminus S. \label{eq:ConditionTheta2-DET}
\end{align}
\end{enumerate}
We further assume, without loss of generality, $\eta\le
c_2\sqrt{C_{\min}}$. Then the following holds true:
\begin{align}
\prob\Big\{\sign(\htheta^n(\lambda)) = \v*\Big\}\ge 1- 4 p^{1-c_1}\,.
\label{MainThm1:ProbabilityEstimate}
\end{align}
\end{thm}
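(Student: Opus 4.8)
The plan is to apply the exact characterization of the Lasso signed support in Lemma~\ref{lem:supp-lasso-DET} with the choice $z=\v*$ and $T=\T*$; this is admissible because $\supp(\v*)=\T*\supseteq S$ by Lemma~\ref{lem:LimitXi0-DET}. It then suffices to show that the dual feasibility condition~\eqref{eq:z1-DET} and the sign condition~\eqref{eq:z2-DET} each hold with probability at least $1-2p^{1-c_1}$, after which a union bound gives the claimed $1-4p^{1-c_1}$. Both failure events are controlled through Gaussian concentration, since $\hr=\bX^\sT W/n$ is a linear image of the noise $W\sim\normal(0,\sigma^2\id)$; the substance of the argument is in computing the correct variances and in the sign bookkeeping.

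For the dual feasibility condition~\eqref{eq:z1-DET}, I would split the left-hand side into the deterministic term $\hSigma_{\T*^c,\T*}\hSigma_{\T*,\T*}^{-1}v_{0,\T*}$, whose $\ell_\infty$ norm is at most $1-\eta$ by GIC~\eqref{eq:GIC-DET}, and the noise term $\lambda^{-1}(\hr_{\T*^c}-\hSigma_{\T*^c,\T*}\hSigma_{\T*,\T*}^{-1}\hr_{\T*})$. For each $j\in\T*^c$ the $j$-th entry of the latter equals $(n\lambda)^{-1}P_j^\sT W$, where $P_j=x_j-\bX_{\T*}(\bX_{\T*}^\sT\bX_{\T*})^{-1}\bX_{\T*}^\sT x_j$ is the residual of the column $x_j$ after orthogonal projection onto the span of $\bX_{\T*}$; hence it is a centered Gaussian of variance at most $\sigma^2\|P_j\|_2^2/(n^2\lambda^2)\le \sigma^2/(n\lambda^2)$, using $\|P_j\|_2\le\|x_j\|_2=\sqrt{n\hSigma_{jj}}\le\sqrt n$. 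With $\lambda$ as in~\eqref{eq:lambda_val-DET} the standard deviation is at most $\eta/\sqrt{2c_1\log p}$, so a Gaussian tail bound gives $\prob\{|\,\text{entry}\,|\ge\eta\}\le 2p^{-c_1}$, and a union bound over the at most $p$ indices $j$ shows the noise term has $\ell_\infty$ norm below $\eta$ except with probability $2p^{1-c_1}$; on that event the triangle inequality yields~\eqref{eq:z1-DET}.

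For the sign condition~\eqref{eq:z2-DET}, note that the bracketed vector is exactly the Lasso estimate restricted to $\T*$, namely $\theta_{0,\T*}-\hSigma_{\T*,\T*}^{-1}(\lambda v_{0,\T*}-\hr_{\T*})$, which decomposes coordinatewise as $\thetaZN_i(\lambda)+u_i$, where $\thetaZN_i(\lambda)=\theta_{0,i}-\lambda[\hSigma_{\T*,\T*}^{-1}v_{0,\T*}]_i$ is the zero-noise solution at regularization $\lambda$ and $u=\hSigma_{\T*,\T*}^{-1}\hr_{\T*}$ is the noise contribution. Condition~\ref{Condition:ThetaMin-DET} forces $\lambda<\xistar$ (indeed~\eqref{eq:ConditionTheta1-DET} gives $|\theta_{0,i}|>\lambda|[\hSigma_{\T*,\T*}^{-1}v_{0,\T*}]_i|$ for $i\in S$), so by Lemma~\ref{lem:LimitXi0-DET} we have $\sign(\thetaZN(\lambda))=\v*$, and moreover $|\thetaZN_i(\lambda)|\ge c_2\lambda$ for every $i\in\T*$: from~\eqref{eq:ConditionTheta1-DET} when $i\in S$, and from~\eqref{eq:ConditionTheta2-DET} together with $\theta_{0,i}=0$ when $i\in\T*\setminus S$. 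Hence $\sign(\htheta^n_i)=v_{0,i}$ for all $i\in\T*$ provided $\|u\|_\infty<c_2\lambda$. Now $u=(\bX_{\T*}^\sT\bX_{\T*})^{-1}\bX_{\T*}^\sT W$, so each $u_i$ is centered Gaussian with variance $\sigma^2[(\bX_{\T*}^\sT\bX_{\T*})^{-1}]_{ii}=\sigma^2[\hSigma_{\T*,\T*}^{-1}]_{ii}/n\le\sigma^2/(nC_{\min})$ by condition~\ref{Condition:Minsigma-DET}. A Gaussian tail bound gives $\prob\{|u_i|\ge c_2\lambda\}\le 2\exp(-c_2^2C_{\min}c_1\log p/\eta^2)$, and the standing assumption $\eta\le c_2\sqrt{C_{\min}}$ makes the exponent at least $c_1\log p$, so each probability is at most $2p^{-c_1}$; a union bound over $i\in\T*$ controls~\eqref{eq:z2-DET} up to probability $2p^{1-c_1}$.

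The main obstacle is this second step: one must check that the prescribed $\lambda$ and the two-part minimum-coefficient condition~\ref{Condition:ThetaMin-DET} are precisely what keeps every coordinate of the zero-noise solution bounded away from zero by the common margin $c_2\lambda$ — simultaneously for the genuinely nonzero coordinates $i\in S$, where the signal must overcome the shrinkage $\lambda[\hSigma_{\T*,\T*}^{-1}v_{0,\T*}]_i$, and for the ``extra'' coordinates $i\in\T*\setminus S$, where the entire magnitude comes from that shrinkage term — and then to match this margin against the noise scale set by $C_{\min}$, which is exactly where the normalization $\eta\le c_2\sqrt{C_{\min}}$ is used. The dual-feasibility step~\eqref{eq:z1-DET} is comparatively routine once the projection-residual representation of the noise entries is in place.
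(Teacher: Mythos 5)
Your proposal is correct and follows essentially the same route as the paper's proof: invoke Lemma~\ref{lem:supp-lasso-DET} with $z=\v*$, $T=\T*$, verify the dual feasibility condition by combining GIC with a Gaussian tail bound on the projected noise $\bX_{\T*^c}^\sT\Pi_{\bX_{\T*}^\perp}W/(n\lambda)$, and verify the sign condition by showing the zero-noise solution has margin $c_2\lambda$ on $\T*$ (splitting $i\in S$ versus $i\in\T*\setminus S$ exactly as in condition~\ref{Condition:ThetaMin-DET}) while the perturbation $\hSigma_{\T*,\T*}^{-1}\hr_{\T*}$ stays below $c_2\lambda$ via condition~\ref{Condition:Minsigma-DET} and $\eta\le c_2\sqrt{C_{\min}}$. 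The variance computations, the use of $\lambda<\xi_0$ to invoke Lemma~\ref{lem:LimitXi0-DET}, and the final union bound giving $1-4p^{1-c_1}$ all match the paper's argument.
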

Theorem~\ref{thm:LassoSuppRecovery-DET} is proved in Section~\ref{proof:LassoSuppRecovery-DET}.
Note that, even in the case standard irrepresentability holds
(and hence $\T*=S$), this result improves over \cite[Theorem
1.(b)]{Wainwright2009LASSO}, in that the required lower bound for $|\theta_{0,i}|$, $i \in S$,
does not depend on $\|\hSigma_{S,S}\|_\infty$. More precisely, Theorem~\ref{thm:LassoSuppRecovery-DET} assumes $|\theta_{0,i}| \ge \lambda (c_2 + |[\hSigma^{-1}_{S,S} v_{0,S}]_i|)$,
for $i \in S$, which is weaker than the assumption of Theorem1.(b)\cite{Wainwright2009LASSO}, namely,
$|\theta_{0,i}| \ge \lambda (c + \|\hSigma^{-1}_{S,S} \|_{\infty})$, since $\|v_{0,S}\|_{\infty} \le 1$. 

\begin{remark}
Condition~\ref{Condition:Minsigma-DET} in Theorem~\ref{thm:LassoSuppRecovery-DET} requires the submatrix $\hSigma_{\T*,\T*}$ to have minimum singular value bounded away form zero.
Assuming $\hSigma_{S,S}$ to be non-singular is necessary for
identifiability. Requiring the minimum singular value of
$\hSigma_{\T*,\T*}$ to be bounded away from zero is not much more
restrictive since $\T*$ is comparable in size with $S$, as stated in Lemma~\ref{lem:Tsize-DET}.
\end{remark}

We next show that the Gauss-Lasso selector correctly recovers the support of $\theta_0$. 
\begin{thm}\label{thm:GLSuppRecovery-DET}
Consider the deterministic design model with empirical covariance matrix $\hSigma \equiv (\bX^\sT\bX)/n$,
and assume that $\hSigma_{i,i} \le 1$ for $i\in[p]$. Under the assumptions of Theorem~\ref{thm:LassoSuppRecovery-DET},
\[
\prob\Big(\|\thetaGL- \theta_0\|_\infty \ge \mu \Big)
\le 4p^{1-c_1} + 2p e^{-nC_{\min}\mu^2/ 2\sigma^2}\,.
\]
In particular, if $\hS$ is the model selected by the Gauss-Lasso, we have
\[
\prob(\hS = S) \ge 1 - 6\, p^{1-c_1/4} \,.
\]
\end{thm}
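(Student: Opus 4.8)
The plan is to reduce the analysis to the behaviour of ordinary least squares on a \emph{fixed} support, by conditioning on the event that the Lasso recovers the signed support $\v*$. Concretely, let $\event \equiv \{\sign(\htheta^n(\lambda)) = \v*\}$; Theorem~\ref{thm:LassoSuppRecovery-DET} gives $\prob(\event^c)\le 4p^{1-c_1}$, and on $\event$ the Lasso active set is $T = \supp(\htheta^n) = \supp(\v*) = \T*$, so the second stage of the Gauss-Lasso is exactly least squares regression onto the columns indexed by $\T*$.

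The subtlety is that $\event$ depends on the noise $W$, so one cannot simply assert that the Gauss-Lasso error is Gaussian conditionally on $\event$. To sidestep this, I would introduce the oracle estimator $\htheta^{\rm OLS}$ defined by $\htheta^{\rm OLS}_{\T*} = (\bX_{\T*}^\sT\bX_{\T*})^{-1}\bX_{\T*}^\sT y$ and $\htheta^{\rm OLS}_{\T*^c} = 0$, which is well defined for every realization of $W$ (thanks to Condition~\ref{Condition:Minsigma-DET}) and coincides with $\thetaGL$ on $\event$. Since $S\subseteq\T*$ we have $\bX\theta_0 = \bX_{\T*}\theta_{0,\T*}$, and substituting $y = \bX\theta_0 + W$ gives the identity $\htheta^{\rm OLS}_{\T*} - \theta_{0,\T*} = (\bX_{\T*}^\sT\bX_{\T*})^{-1}\bX_{\T*}^\sT W$, whereas $\htheta^{\rm OLS}_{\T*^c} - \theta_{0,\T*^c} = 0$. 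This error is a centered Gaussian vector with covariance $\sigma^2(\bX_{\T*}^\sT\bX_{\T*})^{-1} = (\sigma^2/n)\hSigma_{\T*,\T*}^{-1}$, and Condition~\ref{Condition:Minsigma-DET} bounds every diagonal entry of $\hSigma_{\T*,\T*}^{-1}$ by $1/C_{\min}$, so each coordinate has variance at most $\sigma^2/(nC_{\min})$. A one-dimensional Gaussian tail bound and a union bound over the at most $p$ coordinates then give $\prob(\|\htheta^{\rm OLS}-\theta_0\|_\infty\ge\mu)\le 2p\,e^{-nC_{\min}\mu^2/2\sigma^2}$. The first display of the theorem follows by splitting on $\event$: on $\event^c$ we pay $\prob(\event^c)\le 4p^{1-c_1}$, while on $\event$ we may replace $\thetaGL$ by $\htheta^{\rm OLS}$ and drop the intersection, bounding the remaining contribution by $\prob(\|\htheta^{\rm OLS}-\theta_0\|_\infty\ge\mu)$.

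For the support-recovery claim I would specialize to $\mu = \sigma\sqrt{c_1(\log p)/(2nC_{\min})}$, which makes the Gaussian term equal to $2p^{1-c_1/4}$, so that the right-hand side of the first display is at most $6p^{1-c_1/4}$ (using $c_1>1$ to bound $4p^{1-c_1}$ by $4p^{1-c_1/4}$). It then suffices to show that $\{\|\thetaGL-\theta_0\|_\infty < \mu\}$ forces $\hS = S$. On this event, every $i\notin S$ satisfies $|\thetaGL_i| = |\thetaGL_i-\theta_{0,i}| < \mu$, while every $i\in S$ satisfies $|\thetaGL_i| > \theta_{\rm min}-\mu$. The quantitative heart of the argument is that Condition~\ref{Condition:ThetaMin-DET} yields $\theta_{\rm min}\ge c_2\lambda$, and substituting $\lambda = (\sigma/\eta)\sqrt{2c_1(\log p)/n}$ shows that the standing hypothesis $\eta\le c_2\sqrt{C_{\min}}$ is precisely what guarantees $\theta_{\rm min}\ge 2\mu$. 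Consequently the $s_0$ coordinates in $S$ all exceed $\mu$ in modulus while all coordinates outside $S$ stay below $\mu$; the $s_0$ largest entries of $\thetaGL$ are therefore exactly $S$, and the thresholding step returns $\hS = S$.

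The main obstacle is the decoupling carried out in the second paragraph: because the good event $\event$ is a function of the noise, the Gaussian structure of the least-squares error is only available for the \emph{unconditional} oracle estimator, and the device of bounding $\thetaGL$ through $\htheta^{\rm OLS}$ on $\event$ is what legitimizes the tail bound. The remaining work is bookkeeping of constants, the one genuinely load-bearing check being that $\eta\le c_2\sqrt{C_{\min}}$ converts the coefficient lower bound $\theta_{\rm min}\ge c_2\lambda$ into the separation $\theta_{\rm min}\ge 2\mu$ required for the final thresholding.
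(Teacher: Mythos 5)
Your proposal is correct and follows essentially the same route as the paper: condition on the Lasso sign-recovery event from Theorem~\ref{thm:LassoSuppRecovery-DET} (on which the second stage is OLS on the deterministic set $\T*$), apply a Gaussian tail bound with union bound using $\sigma_{\min}(\hSigma_{\T*,\T*})\ge C_{\min}$, and then separate $S$ from $S^c$ via $\theta_{\min}\ge c_2\lambda$ and $\eta\le c_2\sqrt{C_{\min}}$. The paper handles the noise-dependence of the good event by bounding the probability of the intersection directly (your oracle estimator $\htheta^{\rm OLS}$ makes the same decoupling explicit), and it uses $\mu=\theta_{\min}/2$ where you fix $\mu$ numerically and check $\theta_{\min}\ge 2\mu$ --- equivalent bookkeeping.
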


The proof of Theorem~\ref{thm:GLSuppRecovery-DET} is given in Section~\ref{proof:GLSuppRecovery-DET}.
\section{Random Gaussian designs}
\label{sec:RandomDesigns}

In the previous section, we studied the case of deterministic design models which allowed for
a straightforward analysis. Here, we consider the random design model which needs a
more involved analysis.   
Within the random Gaussian design model, the rows $X_i$ are distributed as
$X_i\sim \normal(0,\Sigma)$ for some (unknown) covariance matrix $\Sigma\succ
0$. 

In order to study the performance of Gauss-Lasso selector in this case, we first define the population-level estimator.
Given $\Sigma\in\reals^{p\times p}$, $\Sigma\succ 0$,
$\theta_0\in\reals^p$ and $\xi \in
\reals_+$, the \emph{population-level estimator} $\htheta^{\infty}(\xi) =
\htheta^{\infty}(\xi;\theta_0,\Sigma)$ is defined
as
\begin{align}
\htheta^{\infty}(\xi)\equiv \arg\min_{\theta\in\reals^p}\Big\{
\frac{1}{2}\,\<(\theta-\theta_0),\Sigma(\theta-\theta_0)\> + \xi
\|\theta\|_1\Big\}\, .\label{eq:NinftyProblem}
\end{align}
Notice that the minimizer is unique because $\Sigma$ is strictly
positive definite and hence the cost function on the right-hand side
is strongly convex. In fact, the population-level estimator is obtained by assuming that
the response vector $Y$ is noiseless and $n= \infty$, hence replacing the empirical covariance
$(\bX^\sT \bX /n)$ with the exact covariance $\Sigma$ in the lasso optimization problem~\eqref{eq:LassoEstimator}.

Notice that the population-level estimator $\htheta^{\infty}$ is deterministic, albeit $\bX$ is a random design.
We show that under some conditions on the covariance $\Sigma$ and vector $\theta_0$,
$T\equiv \supp(\htheta^{n}) = \supp(\htheta^{\infty})$, i.e., the population-level estimator and the Lasso estimator
share the same (signed) support. 
Further $T \supseteq S$. Since $\htheta^{\infty}$ (and hence $T$) is deterministic, $\bX_T$ is a 
Gaussian matrix with rows drawn independently from $\normal(0,\Sigma_{T,T})$. This observation
allows for a simple analysis of the Gauss-Lasso selector $\thetaGL$.  
 
An outline of the section is given below:
\begin{enumerate}
\item We begin with proving several properties
of the population-level estimator.  
Similar to the zero-noise problem in Section~\ref{sec:zero-noise}, we show that there exists a threshold $\xi_0$, 
such that for all $\xi \in (0,\xi_0)$, $\supp(\htheta^{\infty}(\xi))$ remains the same and contains $\supp(\theta_0)$.
Moreover, $\supp(\htheta^\infty(\xi))$ is not much larger than $\supp(\theta_0)$.

\item We show that under GIC for covariance matrix $\Sigma$ (and other sufficient conditions), with high probability, the signed support of the Lasso estimator is the same as the signed support of the population-level estimator.

\item Following the previous steps, we show that the Gauss-Lasso selector correctly recovers  the signed support of $\theta_0$.

\end{enumerate}
%

\subsection{The $n=\infty$ problem}
In this section we derive several useful properties of the
population-level problem (\ref{eq:NinftyProblem}).
Comparing Eqs.~\eqref{eq:ZNLassoEstimator} and~\eqref{eq:NinftyProblem},
the estimators $\thetaZN(\xi)$ and $\htheta^\infty(\xi)$ are defined in a very similar manner
(the former is defined with respect to $\hSigma$ and the latter is defined with respect to $\Sigma$), and
as we will see $\htheta^\infty$ also possesses the properties stated in Section~\ref{sec:zero-noise}.

Let $\kappa_\infty(s,c_0)$ be the restricted eigenvalue constant for the covariance matrix $\Sigma$:
\begin{align}\label{eq:RE}
\kappa(s,c_0) \equiv \min_{\substack{J\subseteq [p]\\ |J|\le s}} 
\min_{\substack{u\in\reals^p\\ \|u_{J^c}\|_1\le c_0\|u_J\|_1}}\,
\frac{\<u,\Sigma u\>}{\|u\|^2_2}\, .
\end{align}
%
 

The proofs of the following Lemmas are very similar to the corresponding ones in Section~\ref{sec:zero-noise},
 and are omitted.

\begin{lemma}\label{lem:Tsize}
Let $\htheta^{\infty} =\htheta^{\infty}(\xi)$ be defined as per
Eq.~(\ref{eq:NinftyProblem}), with $\xi>0$. Then, if $s_0 = \|\theta_0\|_0$,
\begin{align}\label{eqn:TsizeB}
\|\htheta^{\infty}\|_0\le \left(1+\frac{4\|\Sigma\|_2}{\kappa(s_0,1)}\right)\, s_0\, .
\end{align} 
\end{lemma}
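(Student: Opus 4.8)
The plan is to mirror the (omitted) proof of Lemma~\ref{lem:Tsize-DET}, replacing the empirical covariance $\hSigma$ by the population covariance $\Sigma$ and the constant $\hkappa$ by $\kappa$ throughout; since $\Sigma\succ 0$ the minimizer $\htheta^{\infty}(\xi)$ of~\eqref{eq:NinftyProblem} is unique, so there is no ambiguity to worry about. Write $\htheta=\htheta^{\infty}(\xi)$, set $u=\htheta-\theta_0$, and let $T=\supp(\htheta)$. The stationarity (KKT) condition for this strongly convex problem reads $\Sigma u = -\xi g$, where $g\in\partial\|\htheta\|_1$ satisfies $g_i=\sign(\htheta_i)$ on $T$ and $|g_i|\le 1$ off $T$.

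First I would establish the cone condition. Comparing the optimal value of~\eqref{eq:NinftyProblem} against the feasible point $\theta=\theta_0$, at which the quadratic term vanishes, gives $\tfrac12\<u,\Sigma u\>\le \xi(\|\theta_0\|_1-\|\htheta\|_1)$. Splitting the $\ell_1$ norms over $S$ and $S^c$, using the triangle inequality together with $\theta_{0,S^c}=0$, yields $\tfrac12\<u,\Sigma u\>\le \xi(\|u_S\|_1-\|u_{S^c}\|_1)$. Since the left-hand side is nonnegative, this forces $\|u_{S^c}\|_1\le\|u_S\|_1$, so $u$ lies in the cone with $c_0=1$ and $J=S$, $|S|=s_0$. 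Hence the restricted eigenvalue bound~\eqref{eq:RE} applies: $\<u,\Sigma u\>\ge\kappa(s_0,1)\,\|u\|_2^2$. Feeding this back into the displayed inequality and bounding $\|u_S\|_1\le\sqrt{s_0}\,\|u\|_2$ by Cauchy--Schwarz gives $\tfrac12\<u,\Sigma u\>\le \xi\sqrt{s_0}\,\|u\|_2\le \xi\sqrt{s_0}\,\sqrt{\<u,\Sigma u\>/\kappa(s_0,1)}$; solving this quadratic inequality in $\sqrt{\<u,\Sigma u\>}$ yields the key estimate $\<u,\Sigma u\>\le 4\xi^2 s_0/\kappa(s_0,1)$.

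Next I would bound the number of false positives $|T\setminus S|$. On $T\setminus S$ one has $\theta_{0,i}=0$ and $\htheta_i\neq 0$, so the KKT relation forces $|[\Sigma u]_i|=\xi|g_i|=\xi$; therefore $\|\Sigma u\|_2^2\ge\sum_{i\in T\setminus S}[\Sigma u]_i^2=\xi^2\,|T\setminus S|$. On the other hand, writing $\|\Sigma u\|_2^2=\<u,\Sigma^2 u\>=\<\Sigma^{1/2}u,\Sigma\,\Sigma^{1/2}u\>\le\|\Sigma\|_2\,\<u,\Sigma u\>$ bounds the same quantity from above. Combining the two with the key estimate gives $\xi^2|T\setminus S|\le \|\Sigma\|_2\cdot 4\xi^2 s_0/\kappa(s_0,1)$, hence $|T\setminus S|\le 4\|\Sigma\|_2\, s_0/\kappa(s_0,1)$. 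Adding the trivial bound $|T\cap S|\le s_0$ produces $\|\htheta^{\infty}\|_0=|T|\le\bigl(1+4\|\Sigma\|_2/\kappa(s_0,1)\bigr)s_0$, which is exactly~\eqref{eqn:TsizeB}.

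The individual steps are elementary once the structure is laid out; the only point requiring genuine care---the mild obstacle---is the chaining of the cone condition with the restricted eigenvalue inequality to extract the clean constant $4s_0/\kappa(s_0,1)$, together with the realization that one should bound the false positives $|T\setminus S|$ rather than $|T|$ directly. It is precisely on $T\setminus S$ that the KKT conditions pin $|[\Sigma u]_i|$ exactly to $\xi$ (because $\theta_{0,i}=0$ there), and it is this restriction that generates the additive $s_0$ in the final bound.
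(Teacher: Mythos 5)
Your proof is correct and follows essentially the same route as the paper's proof of Lemma~\ref{lem:Tsize-DET}: an optimality comparison against $\theta_0$ giving the cone condition, the restricted eigenvalue bound yielding $\<u,\Sigma u\>\le 4\xi^2 s_0/\kappa(s_0,1)$, and the KKT conditions pinning $|[\Sigma u]_i|=\xi$ on $T\setminus S$ so that $\xi^2|T\setminus S|\le\|\Sigma\|_2\<u,\Sigma u\>$. The only cosmetic differences are that the paper first rescales via $\theta=\theta_0+\xi u$ (so $\xi$ drops out of the intermediate bounds) and reaches the quadratic-form estimate by bounding $\|u_S\|_1$ rather than by solving a quadratic inequality in $\sqrt{\<u,\Sigma u\>}$.
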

\begin{lemma}\label{lem:LimitXi0}
Let $\htheta^{\infty} =\htheta^{\infty}(\xi)$ be defined as per
Eq.~(\ref{eq:NinftyProblem}), with $\xi>0$. Then there exist $\xi_0 = \xi_0(\Sigma, S, \theta_0) > 0$,
$\T* \subseteq [p]$, $v_0 \in \{-1,0,+1\}^p$, such that 
the following happens. For all $\xi\in (0,\xistar)$,
$\sign(\htheta^{\infty}(\xi)) = \v*$ and $\supp(\htheta^{\infty}(\xi))
=\supp(\v*) = \T*$.
Further $\T*\supseteq S$, $v_{0,S} = \sign(\theta_{0,S})$ and
$\xi_0 = \min_{i\in S} |\theta_{0,i} / [\Sigma_{\T*,\T*}^{-1}v_{0,\T*}]_i |$.
\end{lemma}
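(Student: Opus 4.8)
The plan is to mirror the proof of the deterministic statement, Lemma~\ref{lem:LimitXi0-DET}, replacing the empirical covariance $\hSigma$ by the population covariance $\Sigma$; since $\Sigma\succ 0$ the argument is in fact cleaner. Because $\Sigma\succ 0$, the cost function in~\eqref{eq:NinftyProblem} is strongly convex, so $\htheta^\infty(\xi)$ is the unique minimizer for every $\xi\ge 0$, with $\htheta^\infty(0)=\theta_0$. The overall strategy is to track the solution path $\xi\mapsto\htheta^\infty(\xi)$ as $\xi\downarrow 0$, show that its signed support is eventually constant, take $(\T*,\v*)$ to be this limiting configuration, and read off the threshold $\xi_0$ from the explicit form of the solution on $\T*$.

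First I would establish continuity of the path at $\xi=0$. Standard stability of minimizers of strongly convex functions (the modulus of strong convexity is $\sigma_{\min}(\Sigma)>0$, uniformly in $\xi$) gives $\htheta^\infty(\xi)\to\theta_0$ as $\xi\downarrow0$. Since $\theta_{0,i}\neq 0$ for $i\in S$, continuity forces $\sign(\htheta^\infty_i(\xi))=\sign(\theta_{0,i})$ for all sufficiently small $\xi$ and every $i\in S$; this already yields $\T*\supseteq S$ and $v_{0,S}=\sign(\theta_{0,S})$. Next I would invoke the piecewise-linear (homotopy) structure of the Lasso path: writing the KKT stationarity condition $\Sigma(\htheta^\infty-\theta_0)+\xi g=0$ with $g\in\partial\|\htheta^\infty\|_1$, one checks that the active set and the sign pattern can change only at finitely many breakpoints of $\xi$, between which $\htheta^\infty(\xi)$ is affine in $\xi$. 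Hence below the smallest positive breakpoint the signed support is constant; I define $\v*$ to be this constant signed support and $\T*\equiv\supp(\v*)$.

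On the interval where the signed support equals $(\T*,\v*)$, the population analogue of Lemma~\ref{lem:ZN-supp} (obtained verbatim with $\hSigma$ replaced by $\Sigma$) applies. Stationarity restricted to $\T*$, together with $\supp(\theta_0)=S\subseteq\T*$, gives $\htheta^\infty_{\T*^c}=0$ and
\begin{align*}
\htheta^\infty_{\T*}=\theta_{0,\T*}-\xi\,\Sigma_{\T*,\T*}^{-1}v_{0,\T*}\,,
\end{align*}
while the dual-feasibility condition on $\T*^c$ reduces to $\|\Sigma_{\T*^c,\T*}\Sigma_{\T*,\T*}^{-1}v_{0,\T*}\|_\infty\le 1$, a constraint that does not depend on $\xi$; consequently no coordinate of $\T*^c$ can enter the active set as $\xi$ grows from $0$. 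For $i\in\T*\setminus S$ one has $\htheta^\infty_i=-\xi[\Sigma_{\T*,\T*}^{-1}v_{0,\T*}]_i$, whose sign is fixed for all $\xi>0$, so these coordinates never leave the support. Thus the signed support can first change only when some coordinate $i\in S$ crosses zero, i.e.\ when $\theta_{0,i}-\xi[\Sigma_{\T*,\T*}^{-1}v_{0,\T*}]_i=0$; the earliest such value of $\xi$ is exactly
\begin{align*}
\xi_0=\min_{i\in S}\Big|\theta_{0,i}\big/[\Sigma_{\T*,\T*}^{-1}v_{0,\T*}]_i\Big|\,,
\end{align*}
which is the claimed threshold (a coordinate with vanishing denominator never crosses zero and contributes $+\infty$ to the minimum).

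The main obstacle is the step asserting that the signed support stabilizes as $\xi\downarrow0$, i.e.\ ruling out sign changes accumulating at $0$. Strong convexity yields uniqueness of $\htheta^\infty(\xi)$ but not, by itself, finiteness of the number of breakpoints; the cleanest remedy is the standard homotopy argument showing that on each maximal interval of constant active-set-and-sign the solution is affine in $\xi$, and that there are only finitely many such configurations. A minor secondary subtlety, already flagged above, is the degenerate case $[\Sigma_{\T*,\T*}^{-1}v_{0,\T*}]_i=0$ for some $i\in S$, which is harmless because such a coordinate simply does not constrain $\xi_0$.
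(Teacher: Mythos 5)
Your proof is correct, but it takes a genuinely different route from the paper's. The paper proves this lemma exactly as it proves Lemma~\ref{lem:LimitXi0-DET}: change variables to $u=(\theta-\theta_0)/\xi$, so the objective becomes $F(u;\xi) = \frac{1}{2}\<u,\Sigma u\> + \|u_{S^c}\|_1 + \big(\|\xi^{-1}\theta_{0,S}+u_S\|_1-\|\xi^{-1}\theta_{0,S}\|_1\big)$; observe that $F(\,\cdot\,;\xi)$ dominates the $\xi$-independent functional $\F*(u) = \frac{1}{2}\<u,\Sigma u\> + \|u_{S^c}\|_1 + \<\sign(\theta_{0,S}),u_S\>$, with equality at any fixed $u$ as soon as $\xi\le\min_{i\in S}|\theta_{0,i}/u_i|$; and conclude that the (unique, since $\Sigma\succ 0$) minimizer $\u*$ of $\F*$ is literally the minimizer of $F(\,\cdot\,;\xi)$ for every $\xi\in(0,\xistar)$. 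Thus $\htheta^{\infty}(\xi)=\theta_0+\xi\u*$ identically on $(0,\xistar)$, and the stationarity condition for $\u*$ yields $u_{0,\T*}=-\Sigma^{-1}_{\T*,\T*}v_{0,\T*}$ and the formula for $\xistar$ in one stroke --- no continuity argument, no homotopy, no breakpoint counting. Your route (continuity of the path at $\xi=0$, stabilization of the signed support, then extension to all of $(0,\xistar)$ via the KKT characterization) also works, and your final step is exactly right: by Lemma~\ref{lem:infinity-supp}, dual feasibility is $\xi$-independent and the sign conditions are affine in $\xi$, so once the configuration $(\T*,\v*)$ holds on some small interval it holds on all of $(0,\xistar)$. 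The one step you leave soft is the stabilization itself: ``finitely many configurations'' does not by itself bound the number of maximal intervals, since nothing a priori prevents a configuration from recurring, with sign changes accumulating at $0$. The fix is available from the same characterization you already invoke: for any $(T,v)$ with $T\supseteq S$, the set of $\xi$ on which $\sign(\htheta^{\infty}(\xi))=v$ is the intersection of one $\xi$-free condition with finitely many linear-in-$\xi$ sign inequalities, hence an interval; combined with your continuity step (which forces $T\supseteq S$ for all small $\xi$), each configuration occupies at most one maximal interval near $0$, ruling out accumulation. In short, both proofs are valid; the paper's comparison argument is shorter and bypasses the path structure entirely, while yours exposes the local geometry of the solution path but needs this extra interval argument to be airtight.
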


Finally we have the following standard characterization of the
solution of the $n=\infty$ problem (\ref{eq:NinftyProblem}).
\begin{lemma}\label{lem:infinity-supp}
Let $\htheta^{\infty} =\htheta^{\infty}(\xi)$ be defined as per
Eq.~(\ref{eq:NinftyProblem}), with $\xi>0$.
Let $T\supseteq S$ and $v\in \{+1,0,-1\}^p$ be such that $\supp(v) =
T$. Then  $\sign(\htheta^{\infty})=v$ if and only if
\begin{align*}
&\Big\|\Sigma_{T^c,T}\Sigma_{T,T}^{-1}v_T\Big\|_{\infty}\le
1\, ,\\
v_T &= \sign
\Big(\theta_{0,T}-\xi\Sigma_{T,T}^{-1}v_T\Big)\, .
\end{align*}
Further, if the above holds,
$\htheta^{\infty}$ is given by $\htheta^{\infty}_{T^c}=0$ and
\begin{align*}
\htheta^{\infty}_T & =
\theta_{0,T}-\xi\Sigma_{T,T}^{-1}v_T\, .
\end{align*}
\end{lemma}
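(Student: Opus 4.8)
The plan is to characterize $\htheta^{\infty}$ through its first-order (KKT) optimality conditions, exactly paralleling the proof of Lemma~\ref{lem:ZN-supp} with $\hSigma$ replaced by $\Sigma$. Since $\Sigma\succ 0$, the cost function in~\eqref{eq:NinftyProblem} is strongly convex, so $\htheta^{\infty}$ is its \emph{unique} minimizer and is characterized by stationarity of the subdifferential: there exists $g\in\partial\|\htheta^{\infty}\|_1$ with $\Sigma(\htheta^{\infty}-\theta_0)+\xi\, g=0$, where $g_i=\sign(\htheta^{\infty}_i)$ whenever $\htheta^{\infty}_i\neq 0$ and $g_i\in[-1,1]$ otherwise. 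I will also use that $\Sigma\succ 0$ forces every principal submatrix, in particular $\Sigma_{T,T}$, to be positive definite and hence invertible.

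For the necessity direction I would assume $\sign(\htheta^{\infty})=v$, so that $\supp(\htheta^{\infty})=T$ and $\htheta^{\infty}_{T^c}=0$. Because $T\supseteq S$, we also have $\theta_{0,T^c}=0$, so the vector $\htheta^{\infty}-\theta_0$ is supported on $T$ and $\Sigma(\htheta^{\infty}-\theta_0)=\Sigma_{\cdot,T}(\htheta^{\infty}_T-\theta_{0,T})$. Restricting stationarity to the rows in $T$, where $g_T=v_T$ since those coordinates are nonzero, gives $\Sigma_{T,T}(\htheta^{\infty}_T-\theta_{0,T})+\xi\, v_T=0$; inverting $\Sigma_{T,T}$ yields the closed form $\htheta^{\infty}_T=\theta_{0,T}-\xi\,\Sigma_{T,T}^{-1}v_T$, and the sign-consistency $\sign(\htheta^{\infty}_T)=v_T$ is precisely the stated condition $v_T=\sign(\theta_{0,T}-\xi\Sigma_{T,T}^{-1}v_T)$. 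Restricting stationarity to the rows in $T^c$ and substituting the closed form gives $g_{T^c}=\Sigma_{T^c,T}\Sigma_{T,T}^{-1}v_T$, and the requirement $\|g_{T^c}\|_{\infty}\le 1$ for $g$ to be an admissible subgradient is exactly the stated inequality $\|\Sigma_{T^c,T}\Sigma_{T,T}^{-1}v_T\|_{\infty}\le 1$.

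For the sufficiency direction I would run this computation in reverse: given the two conditions, define the candidate $\theta^{\star}$ by $\theta^{\star}_{T^c}=0$ and $\theta^{\star}_T=\theta_{0,T}-\xi\,\Sigma_{T,T}^{-1}v_T$, together with the candidate dual vector $g_T=v_T$ and $g_{T^c}=\Sigma_{T^c,T}\Sigma_{T,T}^{-1}v_T$. The sign condition guarantees $\sign(\theta^{\star}_T)=v_T$, so $g$ agrees with $\sign(\theta^{\star})$ on $T$; the displayed inequality guarantees $\|g_{T^c}\|_{\infty}\le 1$, so $g$ is admissible on $T^c$ where $\theta^{\star}$ vanishes. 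A direct block-wise check then shows $\Sigma(\theta^{\star}-\theta_0)+\xi\, g=0$ on both $T$ and $T^c$, so $\theta^{\star}$ satisfies the stationarity condition and is therefore a minimizer. By uniqueness, $\htheta^{\infty}=\theta^{\star}$, which simultaneously delivers $\sign(\htheta^{\infty})=v$ and the asserted formula for $\htheta^{\infty}_T$ and $\htheta^{\infty}_{T^c}$.

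The computation is routine linear algebra, and I do not expect a genuine obstacle; the only point that requires care is verifying that the constructed $g$ is \emph{simultaneously} a valid subgradient on $T^c$ and sign-consistent on $T$, which is exactly where the two stated conditions are used. The hypothesis $T\supseteq S$ is what makes this clean: it forces $\theta_{0,T^c}=0$, thereby decoupling the $T$ and $T^c$ blocks and reducing the problem to the invertible system on $T$ above. Without $T\supseteq S$ the identity $\Sigma(\htheta^{\infty}-\theta_0)=\Sigma_{\cdot,T}(\htheta^{\infty}_T-\theta_{0,T})$ would fail and the closed form would pick up spurious contributions from $\theta_{0,T^c}$.
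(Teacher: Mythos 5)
Your proof is correct and follows essentially the same route as the paper: the paper proves Lemma~\ref{lem:infinity-supp} by the identical argument given for Lemma~\ref{lem:ZN-supp} (block-wise zero-subgradient conditions split over $T$ and $T^c$, using $T\supseteq S$ to kill $\theta_{0,T^c}$, with the converse established by constructing the candidate primal-dual pair and verifying stationarity). The one refinement you add—explicitly invoking strong convexity of the population objective ($\Sigma\succ 0$) to justify uniqueness of the minimizer in the sufficiency step—is a point the paper leaves implicit, and it is exactly the right justification here.
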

Motivated by this result, we introduce the following assumption.
\begin{itemize}
\item[] {\bf Generalized irrepresentability (random designs).} The pair
$(\Sigma,\theta_0)$, $\Sigma\in\reals^{p\times p}$, $\theta_0\in
\reals^p$ satisfy the generalized irrepresentability condition with
parameter $\eta>0$ if the following happens.  Let 
$\v* $, $\T*$ be defined as per Lemma \ref{lem:LimitXi0}. Then 
\begin{align}
\Big\|\Sigma_{\T*^c,\T*}\Sigma_{\T*,\T*}^{-1} v_{0,\T*}\Big\|_{\infty}
&\le 1-\eta\, ,\label{eq:GIC-RAN}
\end{align}
\end{itemize}
%

\subsection{The high-dimensional problem}

We now consider the Lasso estimator (\ref{eq:LassoEstimator}). 
Recall the notations 
\begin{align*}
\hSigma\equiv\frac{1}{n}\bX^{\sT}\bX\,,\quad \quad
\hr  \equiv \frac{1}{n}\bX^{\sT}W\, .
\end{align*}
%
Note that $\hSigma \in\reals^{p\times p}$,
$\hr\in\reals^p$ are both random quantities in the case of random designs.

\begin{thm}\label{thm:LassoSuppRecovery}
Consider the Gaussian random design model with covariance matrix $\Sigma \succ 0$,
and assume that $\Sigma_{i,i} \le 1$ for $i\in[p]$.
Let $\T*\subseteq [p]$, $\v*\in\{+1,0,-1\}^p$ be the deterministic
 set and vector defined in Lemma~\ref{lem:LimitXi0}, and $t_0 \equiv |\T*|$.
 Assume that 
\begin{itemize}
\labitem{(i)}{Condition:Minsigma} We have 
 $\sigma_{\min}(\Sigma_{\T*,\T*}) \ge C_{\min} > 0$. 
\item[(ii)] The pair $(\Sigma,\theta_0)$ satisfies the generalized
  irrepresentability condition with parameter $\eta$.
\end{itemize}
Consider the Lasso estimator $\htheta^n = \htheta^n(y,\bX;\lambda)$ defined as per
Eq.~(\ref{eq:LassoEstimator}), with regularization parameter
\begin{align}
\lambda = \frac{4\sigma}{\eta} \sqrt{ \frac{c_1 \log p}{n}}\,,\label{eq:lambda_val}
\end{align}
for some constant $c_1 > 1$, and suppose that
\begin{enumerate}
\labitem{(iii)}{Condition:ThetaMin} For some $c_2>0$:
\begin{align}
|\theta_{0,i}|\ge  c_2\lambda+\frac{3}{2}\lambda\big|[\Sigma_{\T*,\T*}^{-1}v_{0,\T*}]_i\big|\;\;\;\;\;\;\; &\mbox{ for
  all }i\in S,\label{eq:ConditionTheta1}\\
\big|[\Sigma_{\T*,\T*}^{-1}v_{0,\T*}]_i \big|\ge 2c_2 \;\;\;\;\;\;\; &\mbox{ for
  all }i\in \T*\setminus S. \label{eq:ConditionTheta2}
\end{align}
\end{enumerate}
We further assume, without loss of generality, $\eta\le
c_2\sqrt{C_{\min}}$.

If $n \ge \max(M_1,M_3) t_0 \log p$ with 
\begin{eqnarray*}
M_1  \equiv  \frac{74c_1}{\eta^2C_{\min}}\, ,\quad \quad 
M_3 \equiv \frac{32^2 c_1}{c_2^2 C_{\min}^2}  \, ,
\end{eqnarray*}
 then the following holds true:
\begin{align}
\prob\Big\{\sign(\htheta^n(\lambda)) = \v*\Big\}\ge 1 -  p e^{-\frac{n}{10}} - 6e^{-\frac{t_0}{2}} - 8 p^{1-c_1}\,.
\label{MainThm:ProbabilityEstimate}
\end{align}
\end{thm}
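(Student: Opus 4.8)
The strategy is to apply the exact characterization of the Lasso signed support in Lemma~\ref{lem:supp-lasso-DET} with the deterministic choices $T=\T*$ and $z=\v*$, and to verify the two resulting conditions~\eqref{eq:z1-DET}--\eqref{eq:z2-DET} with high probability. The crucial structural feature is that $\T*$ and $\v*$ are \emph{deterministic}, being defined through the population problem~\eqref{eq:NinftyProblem} in Lemma~\ref{lem:LimitXi0}; hence $\bX_{\T*}$ is a genuine Gaussian matrix whose rows are i.i.d. $\normal(0,\Sigma_{\T*,\T*})$. As a preliminary step I would record the concentration of the active block: once $n\gtrsim t_0$, standard bounds on Wishart matrices give $\smin(\hSigma_{\T*,\T*})\ge C_{\min}/2$ and $\|\hSigma_{\T*,\T*}-\Sigma_{\T*,\T*}\|_2\lesssim\sqrt{t_0/n}$ off an event of probability $pe^{-n/10}+6e^{-t_0/2}$, and I would also condition on the event $\|W\|_2^2\le 2n\sigma^2$. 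These supply the first two terms of the probability bound~\eqref{MainThm:ProbabilityEstimate}.

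The heart of the argument is a conditional Gaussian decomposition of the inactive columns. For each $j\in\T*^c$ I would write $x_j=\bX_{\T*}\beta_j+\widetilde{x}_j$ with $\beta_j=\Sigma_{\T*,\T*}^{-1}\Sigma_{\T*,j}$, where $\widetilde{x}_j$ is independent of $\bX_{\T*}$ with i.i.d. $\normal(0,\sigma_j^2)$ entries and $\sigma_j^2=\Sigma_{jj}-\Sigma_{j,\T*}\Sigma_{\T*,\T*}^{-1}\Sigma_{\T*,j}\le 1$. Substituting this into $\hSigma_{j,\T*}=\beta_j^{\sT}\hSigma_{\T*,\T*}+\widetilde{x}_j^{\sT}\bX_{\T*}/n$ and simplifying gives the clean identity $[\hSigma_{\T*^c,\T*}\hSigma_{\T*,\T*}^{-1}v_{0,\T*}]_j=[\Sigma_{\T*^c,\T*}\Sigma_{\T*,\T*}^{-1}v_{0,\T*}]_j+\widetilde{x}_j^{\sT}\bX_{\T*}\hSigma_{\T*,\T*}^{-1}v_{0,\T*}/n$; that is, the empirical quantity equals the \emph{exact} population quantity plus a fresh fluctuation, with no residual $\hSigma$-versus-$\Sigma$ error. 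Conditioned on $\bX_{\T*}$, this fluctuation is centered Gaussian with variance $\sigma_j^2\,\langle v_{0,\T*},\hSigma_{\T*,\T*}^{-1}v_{0,\T*}\rangle/n\lesssim t_0/(nC_{\min})$. An analogous computation shows that the $j$-th entry of the noise term in~\eqref{eq:z1-DET} equals $\lambda^{-1}n^{-1}\widetilde{x}_j^{\sT}\projp_{\T*}W$, where $\projp_{\T*}$ projects onto the orthocomplement of the span of $\bX_{\T*}$; conditioned on $(\bX_{\T*},W)$ this is Gaussian with standard deviation at most $\sigma_j\|W\|_2/(\lambda n)\lesssim\sigma/(\lambda\sqrt{n})$.

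With these two decompositions, verifying condition~\eqref{eq:z1-DET} becomes a union bound over the (at most $p$) indices in $\T*^c$. The population part is bounded by $1-\eta$ by the generalized irrepresentability condition~\eqref{eq:GIC-RAN}. The fresh fluctuation is kept below $\eta/2$ with probability $1-2p^{1-c_1}$ provided $n\ge M_1 t_0\log p$ with $M_1\propto c_1/(\eta^2C_{\min})$, and the noise term is kept below $\eta/2$ with probability $1-2p^{1-c_1}$ precisely because the prefactor $4$ in $\lambda=(4\sigma/\eta)\sqrt{c_1\log p/n}$ is chosen so that $\sqrt{2c_1\log p}$ times its standard deviation equals $\eta/2$. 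Summing, $\|\cdot\|_\infty\le(1-\eta)+\eta/2+\eta/2\le 1$ off an event of probability $O(p^{1-c_1})$, giving the $8p^{1-c_1}$ term.

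It remains to verify the sign condition~\eqref{eq:z2-DET}, namely $v_{0,\T*}=\sign(\theta_{0,\T*}-\hSigma_{\T*,\T*}^{-1}(\lambda v_{0,\T*}-\hr_{\T*}))$. For $i\in S$ the entry $\theta_{0,i}$ dominates: by~\eqref{eq:ConditionTheta1} its magnitude exceeds $\lambda|[\Sigma_{\T*,\T*}^{-1}v_{0,\T*}]_i|$ with margin $c_2\lambda$, which absorbs both the $\hSigma$-versus-$\Sigma$ error and the noise, so the sign is preserved. For $i\in\T*\setminus S$ we have $\theta_{0,i}=0$, so the sign is that of $-[\hSigma_{\T*,\T*}^{-1}v_{0,\T*}]_i$; by the population characterization of Lemma~\ref{lem:infinity-supp} (which gives $\htheta^\infty_i=-\xi[\Sigma_{\T*,\T*}^{-1}v_{0,\T*}]_i$ for such $i$) this sign equals $v_{0,i}$ at the population level, and condition~\eqref{eq:ConditionTheta2}, $|[\Sigma_{\T*,\T*}^{-1}v_{0,\T*}]_i|\ge 2c_2$, guarantees it is not flipped by the perturbations. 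This last point is the main obstacle: unlike condition~\eqref{eq:z1-DET}, the sign condition involves the \emph{diagonal} block $\hSigma_{\T*,\T*}^{-1}$ and admits no fresh-column trick, so one must show $\hSigma_{\T*,\T*}^{-1}v_{0,\T*}$ is entrywise within $c_2$ of $\Sigma_{\T*,\T*}^{-1}v_{0,\T*}$. Writing $\hSigma_{\T*,\T*}^{-1}-\Sigma_{\T*,\T*}^{-1}=\hSigma_{\T*,\T*}^{-1}(\Sigma_{\T*,\T*}-\hSigma_{\T*,\T*})\Sigma_{\T*,\T*}^{-1}$ and using $\|\Sigma_{\T*,\T*}-\hSigma_{\T*,\T*}\|_2\lesssim\sqrt{t_0/n}$ together with the two inverse bounds yields an error of order $\sqrt{t_0/n}/C_{\min}^2$; requiring this below $c_2$ is exactly the condition $n\ge M_3 t_0\log p$ with $M_3\propto c_1/(c_2^2C_{\min}^2)$, while the entrywise noise $[\hSigma_{\T*,\T*}^{-1}\hr_{\T*}]_i$ is a conditional Gaussian of standard deviation $\lesssim\sigma/\sqrt{nC_{\min}}$ controlled by the same union bound.
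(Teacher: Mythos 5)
Your reduction to Lemma~\ref{lem:supp-lasso-DET} with the deterministic pair $(T,z)=(\T*,\v*)$, and your verification of the dual feasibility condition~\eqref{eq:z1-DET}, coincide with the paper's proof: the decomposition $x_j^{\sT}=\Sigma_{j,\T*}\Sigma_{\T*,\T*}^{-1}\bX_{\T*}^{\sT}+\epsilon_j^{\sT}$, the bound $1-\eta$ on the population part via GIC, the conditional variance bound of order $t_0/(nC_{\min})$ for the fresh fluctuation (using Wishart concentration for $\hSigma_{\T*,\T*}$), and the calibration of $\lambda$ against the projected noise are exactly the paper's steps; your conditioning on $\|W\|_2^2\le 2n\sigma^2$ instead of on $\max_{j}\|x_j\|_2^2<2n$ is an immaterial variant.

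The gap is in the sign condition~\eqref{eq:z2-DET}, at the very step you flag as the main obstacle. The resolvent identity with operator-norm concentration gives $\|\hSigma_{\T*,\T*}^{-1}-\Sigma_{\T*,\T*}^{-1}\|_2\lesssim \sqrt{t_0/n}\,C_{\min}^{-1}$, but to control a single entry of the \emph{vector} $(\hSigma_{\T*,\T*}^{-1}-\Sigma_{\T*,\T*}^{-1})v_{0,\T*}$ you must pay the factor $\|v_{0,\T*}\|_2=\sqrt{t_0}$, so this route only yields $\big|[(\hSigma_{\T*,\T*}^{-1}-\Sigma_{\T*,\T*}^{-1})v_{0,\T*}]_i\big|\lesssim t_0/(\sqrt{n}\,C_{\min})$, not $\sqrt{t_0/n}$ as you state. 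Requiring this below $c_2$ forces $n\gtrsim t_0^2/(c_2^2C_{\min}^2)$, quadratic in $t_0$ and with no $\log p$, so your argument does not prove the theorem at the claimed sample size $n\ge M_3t_0\log p$. Moreover, your absolute target (``$\hSigma_{\T*,\T*}^{-1}v_{0,\T*}$ entrywise within $c_2$ of $\Sigma_{\T*,\T*}^{-1}v_{0,\T*}$'') is generally unattainable, since $|[\Sigma_{\T*,\T*}^{-1}v_{0,\T*}]_i|$ can be as large as $\sqrt{t_0}/C_{\min}$ and the fluctuation of that entry is proportionally large. The paper closes this step with a genuinely finer tool, Lemma~\ref{claim:svdI} (a strengthening of Wainwright's Lemma~5): after whitening, $\hSigma^{-1}-\id=UDU^{\sT}$ with $U$ Haar-distributed, and rotational invariance plus concentration of measure on the sphere give, for each fixed $i\in\T*$,
\begin{align*}
\big|[(\hSigma_{\T*,\T*}^{-1}-\Sigma_{\T*,\T*}^{-1})v_{0,\T*}]_i\big|
\le 8\sqrt{\tfrac{t_0}{n}}\,\big|[\Sigma_{\T*,\T*}^{-1}v_{0,\T*}]_i\big|
+\Delta\,\|\Sigma_{\T*,\T*}^{-1/2}e_i\|_2\,\|\Sigma_{\T*,\T*}^{-1/2}v_{0,\T*}\|_2
\end{align*}
except with probability $2e^{-t_0/2}+2t_0e^{-n\Delta^2/256}$. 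Choosing $\Delta\asymp\sqrt{(\log p)/n}$ makes the second term of order $\sqrt{t_0\log p/n}\,C_{\min}^{-1}$ (rather than $t_0/\sqrt{n}$), and condition~\eqref{eq:ConditionTheta2} converts the whole right-hand side into the relative bound $\tfrac12\big|[\Sigma_{\T*,\T*}^{-1}v_{0,\T*}]_i\big|$ once $n\ge M_3t_0\log p$; this relative form is precisely what the $\tfrac32$-factor margin in condition~\eqref{eq:ConditionTheta1} is designed to absorb. Without this lemma, or an equivalent entrywise concentration bound, your proof only establishes the result under $n\gtrsim t_0^2$.
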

Under standard irrepresentability, this result improves over \cite[Theorem
3.(ii)]{Wainwright2009LASSO}, in that the required lower bound for $|\theta_{0,i}|$, $i \in S$,
does not depend on $\|\Sigma_{S,S}^{-1/2}\|_{\infty}$.
More precisely, Theorem~\ref{thm:LassoSuppRecovery-DET} assumes $|\theta_{0,i}| \ge \lambda (c_2 + 1.5 |[\Sigma^{-1}_{S,S} v_{0,S}]_i|)$,
for $i \in S$, while Theorem 3.(ii)\cite{Wainwright2009LASSO} requires
$|\theta_{0,i}| \ge  c \lambda \|\Sigma^{-1/2}_{S,S} \|^2_{\infty}$, for $i \in S$. Note that
 $|[\Sigma^{-1}_{S,S} v_{0,S}]_i| \le \|\Sigma^{-1}_{S,S}\|_\infty \le \|\Sigma^{-1/2}_{S,S}\|^2_\infty$.

While being closely analogous to Theorem
\ref{thm:LassoSuppRecovery-DET}, the last theorem  has somewhat worse
constants. Indeed in the present case we need to control the
randomness of the design matrix $\bX$ in addition to the one of the
noise.
\begin{remark}
Condition~\ref{Condition:Minsigma-DET} follows readily from the restricted eigenvalue constraint as in Eq.~\eqref{eq:RE}, i.e., $\kappa_\infty(t_0,0) > 0$. This is a reasonable assumption since $\T*$ is not much larger than $S_0$, as stated in Lemma~\ref{lem:Tsize}.
\end{remark}

\begin{coro}\label{cor:t0->s0}
Under the assumptions of Theorem~\ref{thm:LassoSuppRecovery}, if $n \ge \max(\widetilde{M}_1, \widetilde{M}_3) s_0 \log p$, with
\[
\widetilde{M}_1 =  \Big(1+ \frac{4\|\Sigma\|_2}{\kappa_\infty(s_0,1)} \Big) M_1 \,, \quad \quad
\widetilde{M}_3 =  \Big(1+ \frac{4\|\Sigma\|_2}{\kappa_\infty(s_0,1)} \Big) M_3 \,,
\] 
then the following holds:
\[
\prob\Big\{\sign(\htheta^n(\lambda)) = \v*\Big\}\ge 1 -  
pe^{-\frac{n}{10}} - 6e^{-\frac{s_0}{2}} - 8 p^{1-c_1}\,.
\]
\end{coro}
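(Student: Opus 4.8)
The plan is to deduce the corollary directly from Theorem~\ref{thm:LassoSuppRecovery} by controlling the quantity $t_0 = |\T*|$ through the sparsity bound of Lemma~\ref{lem:Tsize}. Set $\rho \equiv 1 + 4\|\Sigma\|_2/\kappa_\infty(s_0,1)$, so that by definition $\widetilde{M}_1 = \rho\, M_1$ and $\widetilde{M}_3 = \rho\, M_3$. The first step is to record a two-sided control on $t_0$. By Lemma~\ref{lem:LimitXi0}, for every $\xi \in (0,\xi_0)$ we have $\supp(\htheta^\infty(\xi)) = \T*$, so $t_0 = \|\htheta^\infty(\xi)\|_0$; applying Eq.~\eqref{eqn:TsizeB} of Lemma~\ref{lem:Tsize} (with $\kappa$ and $\kappa_\infty$ denoting the same restricted eigenvalue constant of $\Sigma$) and using $\T* \supseteq S$, again from Lemma~\ref{lem:LimitXi0}, yields
\[
s_0 \le t_0 \le \rho\, s_0\,.
\]

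Next I would verify that the hypothesis $n \ge \max(\widetilde{M}_1,\widetilde{M}_3)\, s_0 \log p$ implies the sample-size requirement of Theorem~\ref{thm:LassoSuppRecovery}. Using $\widetilde{M}_i = \rho\, M_i$ together with the upper bound $t_0 \le \rho\, s_0$,
\[
\max(\widetilde{M}_1,\widetilde{M}_3)\, s_0 \log p = \rho\, \max(M_1,M_3)\, s_0 \log p \ge \max(M_1,M_3)\, t_0 \log p\,,
\]
so that $n \ge \max(M_1,M_3)\, t_0 \log p$, exactly as demanded by the theorem. All the remaining assumptions of Theorem~\ref{thm:LassoSuppRecovery} (the minimum-singular-value condition, GIC, the minimum-coefficient condition, the prescribed value of $\lambda$, and $\eta \le c_2\sqrt{C_{\min}}$) are inherited verbatim from the corollary's hypothesis, since they do not involve the sample size.

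Theorem~\ref{thm:LassoSuppRecovery} therefore applies and gives
\[
\prob\big\{\sign(\htheta^n(\lambda)) = \v*\big\} \ge 1 - p\, e^{-n/10} - 6\, e^{-t_0/2} - 8\, p^{1-c_1}\,.
\]
The final step is to replace $t_0$ by $s_0$ in the error term. Since $t_0 \ge s_0$, we have $e^{-t_0/2} \le e^{-s_0/2}$, hence $-6\, e^{-t_0/2} \ge -6\, e^{-s_0/2}$, so the right-hand side above is at least $1 - p\, e^{-n/10} - 6\, e^{-s_0/2} - 8\, p^{1-c_1}$, which is precisely the claimed bound.

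I do not anticipate a genuine obstacle here: the statement is a bookkeeping consequence of the already-established theorem and lemma. The only points demanding care are keeping the direction of each inequality straight --- in particular that the \emph{larger} quantity $t_0$ produces the \emph{smaller}, and hence still valid, tail term $e^{-t_0/2}$ --- and identifying the constants $\kappa$ of Lemma~\ref{lem:Tsize} with the $\kappa_\infty$ appearing in the definitions of $\widetilde{M}_1,\widetilde{M}_3$, both of which denote the restricted eigenvalue constant of the population covariance $\Sigma$.
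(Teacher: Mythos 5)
Your proposal is correct and follows essentially the same route as the paper: the paper's own proof simply invokes Theorem~\ref{thm:LassoSuppRecovery} together with $s_0 \le t_0$ (since $S \subseteq \T*$) and $t_0 \le (1+4\|\Sigma\|_2/\kappa_\infty(s_0,1))\,s_0$ from Lemma~\ref{lem:Tsize}, exactly the two-sided control you establish. Your write-up is just a more detailed version, including the worthwhile observation (which the paper leaves implicit) that Lemma~\ref{lem:LimitXi0} is what identifies $t_0$ with $\|\htheta^\infty(\xi)\|_0$ so that Lemma~\ref{lem:Tsize} applies.
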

\begin{proof}[Proof (Corollary~\ref{cor:t0->s0})]
The result follows readily from Theorem~\ref{thm:LassoSuppRecovery}, noting that $s_0 \le t_0$ since $S_0 \subseteq \T*$,
and $t_0 \le (1 + 4\|\Sigma\|_2/\kappa_\infty(s_0,1)) s_0$ as per Lemma~\ref{lem:Tsize}.
\end{proof}

Below, we show that the Gauss-Lasso selector correctly recovers the signed support of $\theta_0$.
\begin{thm}\label{thm:GLSuppRecovery}
Consider the random Gaussian design model with covariance matrix $\Sigma \succ 0$,
and assume that $\Sigma_{i,i} \le 1$ for $i\in[p]$. Under the assumptions of Theorem~\ref{thm:LassoSuppRecovery}, and
for $n \ge \max(\widetilde{M}_1,\widetilde{M}_3) s_0 \log p$, we have
\[
\prob\Big(\|\thetaGL- \theta_0\|_\infty \ge \mu \Big) \le 
pe^{-\frac{n}{10}} + 6e^{-\frac{s_0}{2}} + 8 p^{1-c_1} + 2p e^{-nC_{\min}\mu^2/ 2\sigma^2}\,.
\]
Moreover, letting $\hat{S}$ be the model returned by the Gauss-Lasso selector,  we have
\[
\prob(\hS = S) \ge 1 - p\,e^{-\frac{n}{10}} - 6\,e^{-\frac{s_0}{2}} - 10\, p^{1-c_1} \,.
\]
\end{thm}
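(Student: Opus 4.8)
The plan is to reduce the Gauss-Lasso to an ordinary least squares problem on the \emph{deterministic} set $\T*$ and then control the least-squares error by a Gaussian tail bound. Let $\event \equiv \{\sign(\htheta^n)=\v*\}$. By Corollary~\ref{cor:t0->s0}, under the stated sample-size condition $\event$ holds with probability at least $1 - pe^{-n/10} - 6e^{-s_0/2} - 8p^{1-c_1}$, and on $\event$ we have $T \equiv \supp(\htheta^n)=\supp(\v*)=\T*$. Since $S\subseteq\T*$ (Lemma~\ref{lem:LimitXi0}), we may write $y=\bX_{\T*}\theta_{0,\T*}+W$, so on $\event$ the second step of the selector yields $\thetaGL_{\T*}-\theta_{0,\T*}=(\bX_{\T*}^\sT\bX_{\T*})^{-1}\bX_{\T*}^\sT W\equiv \Delta$ and $\thetaGL_{\T*^c}=\theta_{0,\T*^c}=0$. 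Thus on $\event$ the error $\thetaGL-\theta_0$ is exactly the least-squares noise term supported on $\T*$, and $\|\thetaGL-\theta_0\|_\infty=\|\Delta\|_\infty$ there.

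First I would bound $\prob(\|\thetaGL-\theta_0\|_\infty\ge\mu)$. The key point is that $\T*$ is non-random (it depends only on $(\Sigma,\theta_0)$ through Lemma~\ref{lem:LimitXi0}), so $\Delta$ is a well-defined statistic and the inclusion $\{\|\thetaGL-\theta_0\|_\infty\ge\mu\}\cap\event\subseteq\{\|\Delta\|_\infty\ge\mu\}$ holds pointwise. Conditioning on $\bX$ keeps $W\sim\normal(0,\sigma^2\id)$, since $W$ is independent of $\bX$; hence each coordinate $\Delta_i=e_i^\sT(\bX_{\T*}^\sT\bX_{\T*})^{-1}\bX_{\T*}^\sT W$ is, given $\bX$, centered Gaussian with variance $\sigma^2[(\bX_{\T*}^\sT\bX_{\T*})^{-1}]_{ii}=(\sigma^2/n)\,[\hSigma_{\T*,\T*}^{-1}]_{ii}$. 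On the design event where $\sigma_{\min}(\hSigma_{\T*,\T*})\ge C_{\min}$ this variance is at most $\sigma^2/(nC_{\min})$, so a Gaussian tail bound together with a union bound over the $t_0\le p$ coordinates of $\T*$ gives $2p\,e^{-nC_{\min}\mu^2/2\sigma^2}$. Combining with $\prob(\event^c)$ produces the first displayed inequality. The required lower bound on $\sigma_{\min}(\hSigma_{\T*,\T*})$ follows from concentration of the empirical covariance around $\Sigma_{\T*,\T*}$ in the regime $n\gg t_0$; this is the same design event already controlled inside the proof of Theorem~\ref{thm:LassoSuppRecovery}, so its failure probability is absorbed into the term $pe^{-n/10}$.

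To pass from the $\ell_\infty$ estimate to exact recovery I would take $\mu=\sigma\sqrt{2c_1(\log p)/(nC_{\min})}$, for which $2p\,e^{-nC_{\min}\mu^2/2\sigma^2}=2p^{1-c_1}$. On $\event\cap\{\|\thetaGL-\theta_0\|_\infty<\mu\}$ the entries separate cleanly: for $i\in S$ one has $|\thetaGL_i|\ge\min_{j\in S}|\theta_{0,j}|-\mu$, for $i\in\T*\setminus S$ we have $\theta_{0,i}=0$ so $|\thetaGL_i|<\mu$, and $\thetaGL_i=0$ off $\T*$. Condition~\ref{Condition:ThetaMin} gives $\min_{j\in S}|\theta_{0,j}|\ge c_2\lambda$, and comparing $\mu$ with $c_2\lambda$ (recall $\lambda=(4\sigma/\eta)\sqrt{c_1(\log p)/n}$) under the standing assumption $\eta\le c_2\sqrt{C_{\min}}$ yields $2\mu<c_2\lambda\le\min_{j\in S}|\theta_{0,j}|$. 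Hence every coordinate in $S$ has modulus exceeding $\mu$ while every coordinate outside $S$ has modulus below $\mu$, so the $s_0$ largest entries of $\thetaGL$ are exactly those in $S$ and the thresholding step returns $\hS=S$. Therefore $\{\hS\neq S\}\cap\event\subseteq\{\|\Delta\|_\infty\ge\mu\}$, and the second claim follows with total failure probability $pe^{-n/10}+6e^{-s_0/2}+10p^{1-c_1}$.

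The main obstacle is the middle step: controlling the least-squares error while simultaneously handling the randomness of the design and of the noise. The clean resolution is to exploit that $\T*$ is deterministic, which lets one take the event inclusion pointwise and only then invoke the Gaussianity of $W$ conditional on $\bX$, thereby avoiding any spurious correlation that conditioning on the Lasso-recovery event $\event$ would otherwise introduce. The remaining care is in bounding the random diagonal entries $[\hSigma_{\T*,\T*}^{-1}]_{ii}$ uniformly; this is where the minimum-singular-value control of $\hSigma_{\T*,\T*}$ (equivalently, the inverse-Wishart structure of $(\bX_{\T*}^\sT\bX_{\T*})^{-1}$) enters, and where the sample-size requirement $n\gtrsim t_0\log p$ is used.
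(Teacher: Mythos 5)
Your proposal is correct and follows essentially the same route as the paper's proof: bound $\prob(T\neq \T*)$ via Theorem~\ref{thm:LassoSuppRecovery} and Corollary~\ref{cor:t0->s0}, observe that on this event the Gauss-Lasso reduces to ordinary least squares on the \emph{deterministic} set $\T*$ (so the error is a Gaussian vector conditional on $\bX_{\T*}$), and conclude with a union-bounded Gaussian tail plus thresholding at a level below $\theta_{\min}/2$, using $\theta_{\min}\ge c_2\lambda$ and $\eta\le c_2\sqrt{C_{\min}}$. The one caveat is your claim that concentration gives $\sigma_{\min}(\hSigma_{\T*,\T*})\ge C_{\min}$ with high probability: concentration only yields $\|\hSigma_{\T*,\T*}^{-1}\|_2\le (1+8\sqrt{t_0/n})\,C_{\min}^{-1}\le 9C_{\min}^{-1}$ (the paper's event $\event_1$, whose failure probability $2e^{-t_0/2}$ sits inside the $6e^{-s_0/2}$ term, not inside $pe^{-n/10}$), which degrades the constant in the exponent $e^{-nC_{\min}\mu^2/2\sigma^2}$ by a bounded factor --- a discrepancy between proof and stated constant that the paper itself shares, so it is cosmetic rather than structural.
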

The proof of Theorem~\ref{thm:GLSuppRecovery} is deferred to Section~\ref{proof:GLSuppRecovery}.
\begin{remark}{\bf [Detection level]} 
Let $\theta_{\min} \equiv \min_{i\in S} |\theta_{0,i}|$ be the minimum magnitude of the non-zero entries of vector
$\theta_0$. 
By Theorem~\ref{thm:GLSuppRecovery}, Gauss-Lasso selector correctly recovers $\supp(\theta_0)$, with
probability greater than $1 - p\,e^{-\frac{n}{10}} - 6\,e^{-\frac{s_0}{2}} - 10\, p^{1-c_1}$, if 
$n \ge \max(\tilde{M}_1,\tilde{M}_3) s_0 \log p$, and
\begin{eqnarray}\label{eq:thetamin}
\theta_{\min} \ge C \sigma  \sqrt{\frac{\log p}{n}}\, \big(1 + \|\Sigma^{-1}_{\T*,\T*}\|_\infty\big)\,,
\end{eqnarray}
where $C = C(c_1,c_2,\eta)$ is a constant depending on $c_1,c_2$, and $\eta$. Eq.~\eqref{eq:thetamin}
stems from the condition~\ref{Condition:ThetaMin} in Theorem~\ref{thm:LassoSuppRecovery}.

We can further generalize this result. Define
\[
S_1 = \bigg\{i\in S: |\theta_{0,i}| \ge C \sigma  \sqrt{\frac{\log p}{n}}\, \big(1 + \|\Sigma^{-1}_{\T*,\T*}\|_\infty\big) \bigg\}\,, 
\]
and $S_2 = S\backslash S_1$. 
By a very similar argument to the proof of Theorem~\ref{thm:LassoSuppRecovery},
the Gauss-Lasso selector can recover $S_1$, if $\|\theta_{0,S_2}\| =O(\sigma \sqrt{\log p /n})$.
More precisely, letting $\widetilde{W} = \bX \theta_{0,S_2} + W$, the response vector $Y$ can be recast as
$Y = \bX\theta_{0,S_1} + \widetilde{W}$ and the Gauss-Lasso selector treats the small entries $\theta_{0,S_2}$
as noise. 
\end{remark}

%
%
\section{UCI communities and crimes data example}
\label{sec:crime}
We consider a problem about predicting the rate of violent crimes in different communities within US, based on 
other demographic attributes of the communities. 
We evaluate the performance of the Gauss-Lasso selector on the UCI communities and crimes dataset~\cite{FrankAsuncion2010}.
The dataset consists of a univariate response variable and $122$ predictive attributes for $1994$ communities.
The response variable is the total number of violent crimes per $100K$ population. Covariates are quantitative,
including e.g., the average family income, the fraction of unemployed population, and the police operating budget.
We consider a linear model as in~\eqref{eq:NoisyModel} and perform model selection using Gauss-Lasso selector and Lasso estimator.

\begin{figure}[!t]
\centering
\includegraphics*[viewport = -10 40 550 510, width =
3.5in]{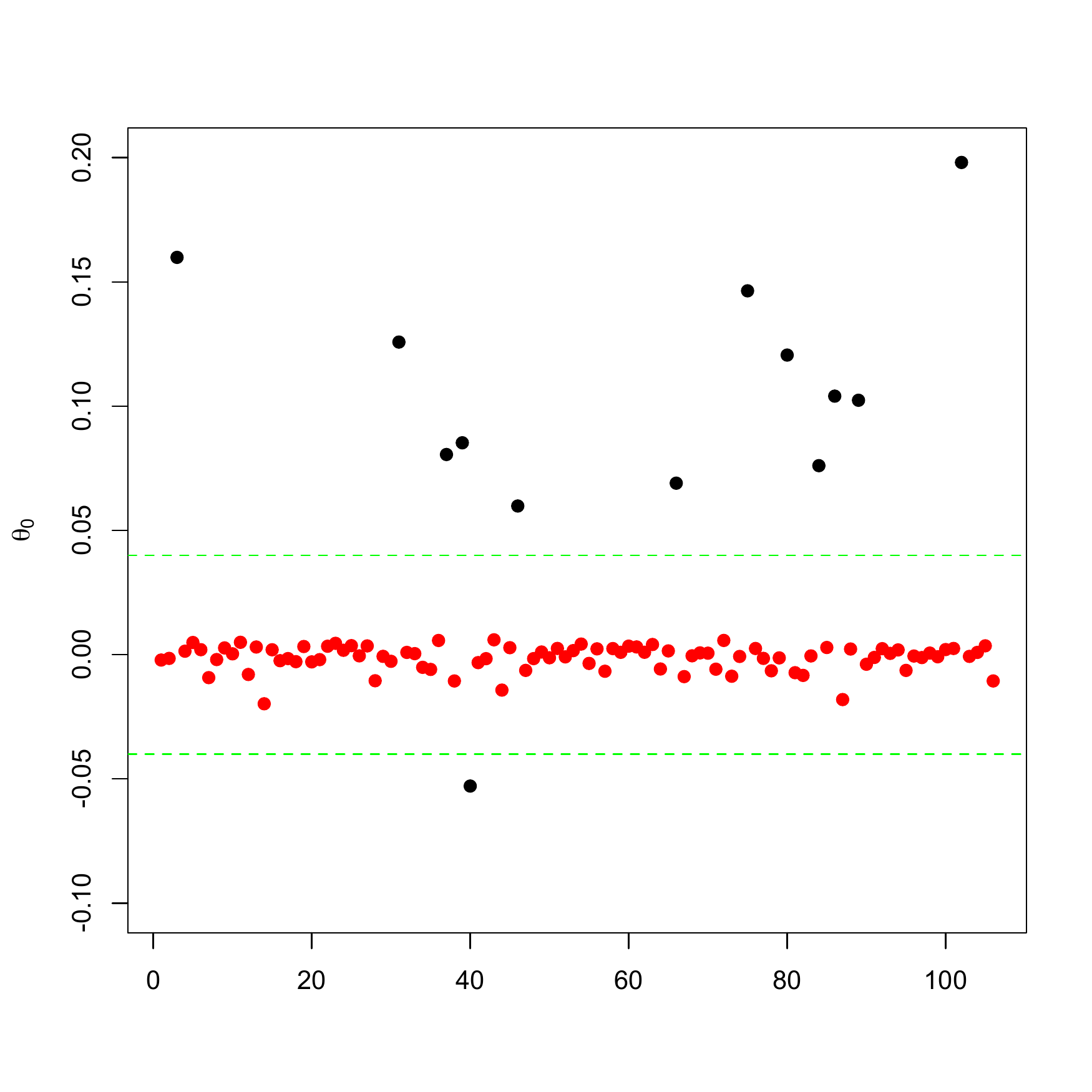}
\caption{Parameter vector $\theta_0$ for the communities dataset. The entries with magnitude larger than $0.04$ (shown in black) are treated as significant ones.}
\label{fig:true_theta}
\end{figure}

We do the following preprocessing steps: $(i)$ Each missing value is replaced by the mean of
the non-missing values of that attribute for other communities; $(ii)$ We eliminate $16$ attributes to
make the ensemble of the attribute vectors linearly independent; $(iii)$ We normalize the columns to 
have mean zero and $\ell_2$ norm $\sqrt{n}$. Thus we obtain a design matrix
$\bX_{{\rm tot}} \in \reals^{n_{{\rm tot}} \times p}$ with $n_{{\rm tot}} = 1994$ and $p = 106$.

For the sake of performance evaluation, we need to know the true model, i.e., the true significant covariates.
We let $\theta_0 = (\bX_{{\rm tot}}^\sT \bX_{{\rm tot}})^{-1} \bX_{{\rm tot}}^\sT y$ be the least square solution
obtained from the whole dataset $\bX_{{\rm tot}}$.
The entries of $\theta_0$ are shown in Fig.~\ref{fig:true_theta}. Clearly only a few of them are non negligible, corresponding
to the true model. We treat the entries with magnitude larger than
$0.04$ as truly active and the others as truly inactive.
The number of active covariates according to this criterion is $s_0 =13$.

We choose random subsamples of size $n = 85$ from the communities and normalize each column of the resulting design
matrix to have mean zero and $\ell_2$ norm $\sqrt{n}$. We use Gauss-Lasso selector and Lasso for 
model selection based on this design. Figures~\ref{fig:Gauss-Lasso} and~\ref{fig:Lasso} respectively show the solution path for Gauss-Lasso and Lasso as the parameter
$\lambda$ changes form $\lambda = 0.001$ to $\lambda = 1$. The paths corresponding to the truly active set 
are in black and the paths corresponding to the truly inactive variables are in red. At $\lambda = 1$, the solutions $\thetaGL(\lambda)$
and $\htheta^n(\lambda)$ have no active variables; for decreasing $\lambda$, each knot $\lambda_k$ marks
the entry or removal of some variables from the current active set of the Lasso solution. Therefore, the support of the Lasso solution
$T$ remains constant in between knots. Since Gauss-Lasso selector performs ordinary least squares restricted to $T$, its
coordinate paths are constant in between knots. However, the Lasso paths are linear with respect to 
$\lambda$, with changes in slope at the knots (see
e.g.,~\cite{Efron04leastangle} for a discussion).

It is clear from Figure \ref{fig:Lasso} that the Lasso support
either misses a large fraction of the truly active covariates, or
includes many false positives. 
For instance at $\lambda= 0.08$, we get
$4$ true positives out of $13$ and $4$ false positives. On the other
hand, for a smaller value of the regularization parameter,  $\lambda= 0.01$, we get $10$
true positives out of $13$ and $8$ false positives.\footnote{We treat the entries of the Lasso solution with magnitude less than $0.005$ as zero.}

If we consider on the other hand the Gauss-Lasso, any $\lambda\le 0.02$
produces a set of coefficients with a
gap between large ones, that are mostly true positives, and small
ones, that are mostly true negatives.


%
\begin{figure}[!t]
\centering
\includegraphics*[viewport = 0 10 480 480, width =
3in]{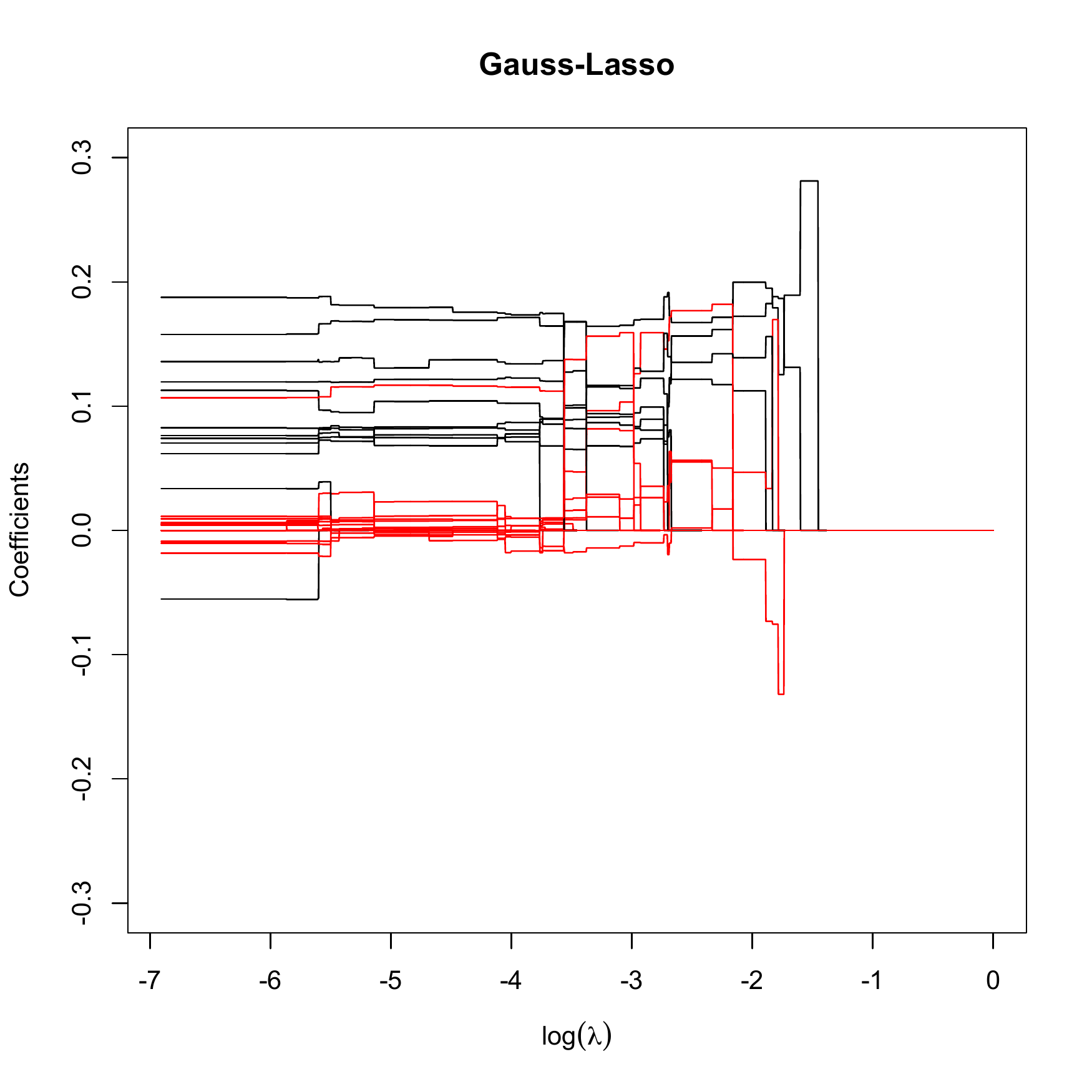}
\caption{Coordinate paths for Gauss-Lasso selector and a random subset of $n=85$ communities. The paths corresponding to the significant variables of $\theta_0$ are shown in black. The coordinate paths for Gauss-Lasso are piecewise constant.}
\label{fig:Gauss-Lasso}
\end{figure}
\begin{figure}[!t]
\centering
\includegraphics*[viewport = 0 10 480 480, width =
3in]{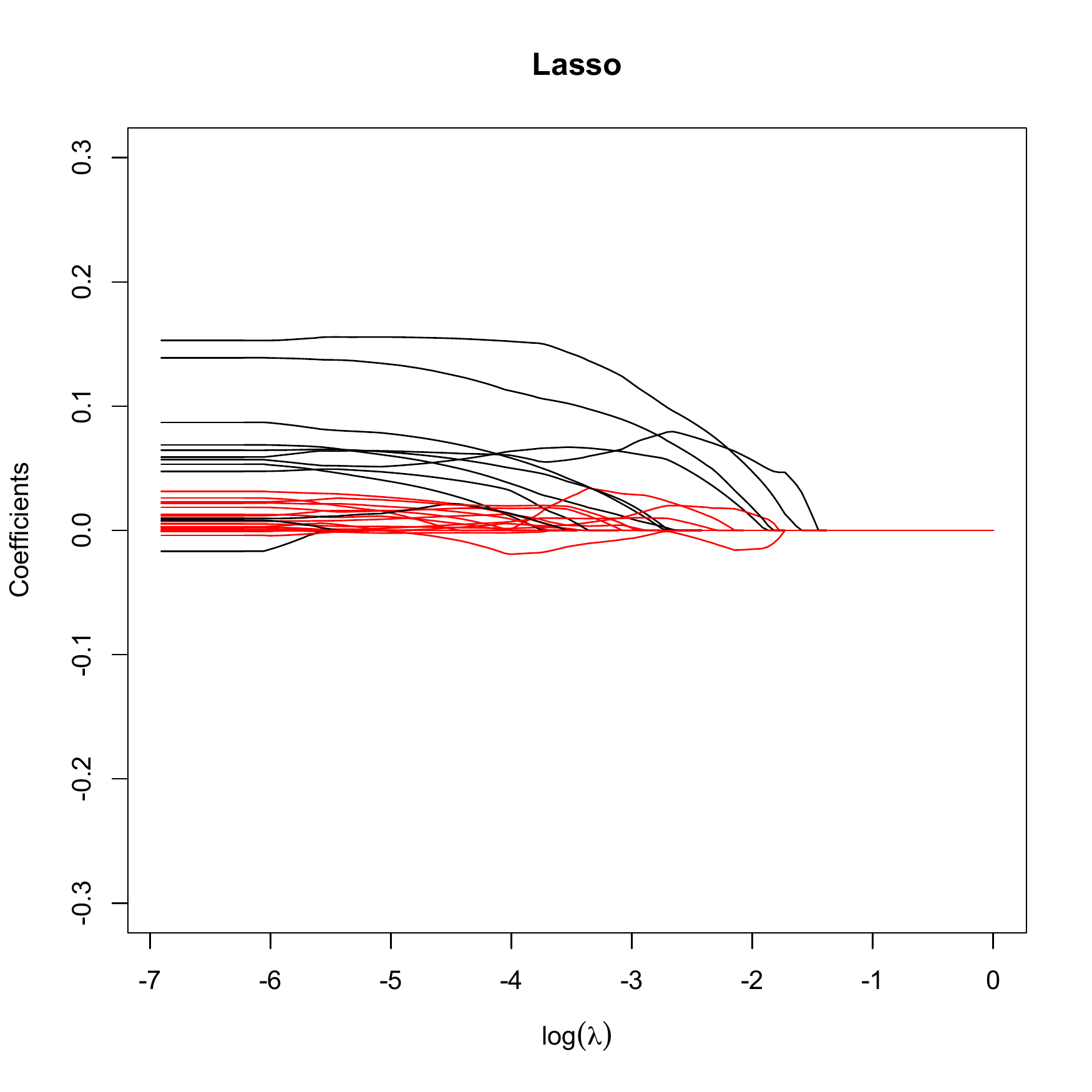}
\caption{Coordinate paths for Lasso selector and a random subset of $n=85$ communities. The paths corresponding to the significant variables of $\theta_0$ are shown in black. The coordinate paths for Lasso are piecewise linear.}
\label{fig:Lasso}
\end{figure}
%

%
\section{Proof of Theorems~\ref{thm:LassoSuppRecovery-DET} and \ref{thm:GLSuppRecovery-DET}}
\label{proof:two-thm}

In this section we prove Theorems~\ref{thm:LassoSuppRecovery-DET} and
\ref{thm:GLSuppRecovery-DET} using Lemmas \ref{lem:Tsize-DET} to
\ref{lem:supp-lasso-DET}.
The latter are proved in the appendices.

\subsection{Proof of Theorem~\ref{thm:LassoSuppRecovery-DET}}
\label{proof:LassoSuppRecovery-DET}

By the condition \ref{Condition:ThetaMin-DET} in the statement of the theorem, we have
\[
\lambda < \min_{i\in
  S}\left|\frac{\theta_{0,i}}{[\hSigma_{\T*,\T*}^{-1}v_{0,\T*}]_i}\right|
= \xistar\,,
\]
where the equality holds because of Lemma~\ref{lem:LimitXi0-DET}.
By Lemma~\ref{lem:LimitXi0-DET}, we know that
$\sign(\thetaZN(\lambda)) = \v*$ and that $\supp(\v*) = \T*$ contains
the true support $S$. Applying Lemma~\ref{lem:ZN-supp}, Eq.~(\ref{eq:thetaZNvT2}) and using the generalized irrepresentability assumption~\eqref{eq:GIC-DET}, we obtain
\begin{align}
&\Big\|\hSigma_{{\T*}^c,\T*}\hSigma_{\T*,\T*}^{-1}v_{0,\T*}\Big\|_{\infty}\le
1-\eta\, ,\label{eq:v*1-DET}\\
&v_{0,\T*} = \sign
\Big(\theta_{0,\T*}-\lambda \hSigma_{\T*,\T*}^{-1} v_{0,\T*}\Big)\, .\label{eq:v*2-DET}
\end{align}
Also, by Lemma~\ref{lem:supp-lasso-DET}, $\sign(\htheta^n) = \v*$ if
Eqs.~\eqref{eq:z1-DET} and~\eqref{eq:z2-DET} hold with $z = \v*$ and
$T = \T*$, namely, if
\begin{align}
\Big\|
\hSigma_{{\T*}^c,\T*}&\hSigma_{\T*,\T*}^{-1} v_{0,\T*} + \frac{1}{\lambda} (\hr_{{\T*}^c} - \hSigma_{{\T*}^c,\T*}\hSigma_{\T*,\T*}^{-1}\hr_{\T*})
\Big\|_{\infty}\le
1\, , \label{eq:z1->v1-DET}\\
&v_{0,\T*} = \sign\Big(\theta_{0,T} - \hSigma^{-1}_{\T*,\T*} (\lambda v_{0,\T*} - \hr_{\T*}) \Big)\,.\label{eq:z2->v2-DET}
\end{align}
In the sequel, we show that these equations are satisfied, with probability lower bounded as per Eq.~(\ref{MainThm1:ProbabilityEstimate}).

We begin with proving Eq.~\eqref{eq:z1->v1-DET}. Let $\term = (1/\lambda) (\hr_{{\T*}^c} - \hSigma_{{\T*}^c,\T*}\hSigma_{\T*,\T*}^{-1}\hr_{\T*})$. We need to show that $\|\term\|_{\infty} \le \eta $. Plugging for $\hr$, we get
$\term \equiv \bX_{\T*^c} \Pi_{\bX^\perp_{\T*}} W/(n\lambda)$, where $\Pi_{\bX^\perp_{\T*}} = \id - \bX_{\T*} (\bX_{\T*}^\sT \bX_{\T*})^{-1} \bX_{\T*}^\sT$ is the orthogonal projection  onto the orthogonal complement of the column space of $\bX_{\T*}$.
Since $W \sim \normal(0,\sigma^2 \id_{n\times n})$, the variable $\term_j = x_j^\sT \Pi_{\bX^\perp_{\T*}} W /(n \lambda)$ is normal with variance at most
\[
\Big(\frac{\sigma}{n\lambda}\Big)^2\|\Pi_{\bX^\perp_{\T*}} x_j\|_2^2 \le
\Big(\frac{\sigma}{n\lambda}\Big)^2\|x_j\|_2^2\le \frac{\sigma^2}{n\lambda^2}\,,
\]
where we used  the fact that 
$\|x_j\|^2 \le n$, as $\hSigma_{i,i} \le 1$.
By the  Gaussian tail bound with union bound over $j\in \T*^c$, we obtain
\begin{eqnarray}\label{eq:term1-DET}
\prob(\|\term\|_{\infty} \le \eta ) \ge 1 - 2pe^{-\frac{n\lambda^2\eta^2}{2\sigma^2}} = 1- 2p^{1-c_1}\,.
\end{eqnarray}

We next prove Eq.~\eqref{eq:z2->v2-DET}.
Given Eq.~\eqref{eq:v*2-DET}, we need to show 
\begin{align*}
\sign\Big(\theta_{0,\T*} - \lambda \hSigma^{-1}_{\T*,\T*} v_{0,\T*}\Big) = \sign\Big(\theta_{0,\T*} - \hSigma^{-1}_{\T*,\T*} (\lambda v_{0,\T*} - \hr_{\T*})\Big)\,.
\end{align*}
Let $ u \equiv \theta_{0,\T*} - \lambda \hSigma^{-1}_{\T*,\T*}
v_{0,\T*}$, and $\hu \equiv \theta_{0,\T*} -
\hSigma^{-1}_{\T*,\T*} (\lambda v_{0,\T*} - \hr_{\T*})$.

By condition \emph{\ref{Condition:ThetaMin}}, we have, for all
$i\in S$, $|u_i|\ge |\theta_{0,i}| - \lambda |[\hSigma^{-1}_{\T*,\T*}
v_{0,\T*}]_i|\ge c_2\lambda$.
Further, for all $i\in\T*\setminus S$, we have $|u_i| = \lambda |[\hSigma^{-1}_{\T*,\T*}
v_{0,\T*}]_i|\ge c_2\lambda$.
Summarizing, for all $i \in \T*$, we have
$|u_i| \ge c_2\lambda$.
We will show that  $\|u - \hu\|_\infty = \|\hSigma^{-1}_{\T*,\T*} \hr_{\T*}\|_\infty < c_2\lambda$, with high probability, thus implying
$\sign(u_{\T*})=\sign(\hu_{\T*})$ as desired.
\begin{lemma}\label{lem:term2-DET}
The following holds true.
\begin{align}\label{eq:term2-DET}
\prob\Big( \|\hSigma_{\T*,\T*}^{-1} \hr_{\T*}\|_\infty \ge \sigma \sqrt{\frac{2c_1\log p}{n}}\, \|\hSigma^{-1}_{\T*,\T*}\|_2^{1/2} \Big) \le 2p^{1-c_1}\,.
\end{align}
\end{lemma}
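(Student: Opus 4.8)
The plan is to bound $\|\hSigma_{\T*,\T*}^{-1}\hr_{\T*}\|_\infty$ by a Gaussian tail argument applied coordinatewise, exactly in the spirit of the bound on $\term$ in the preceding paragraph. Recall that $\hr = \bX^\sT W/n$, so $\hr_{\T*} = \bX_{\T*}^\sT W/n$, and therefore the vector of interest is
\[
\hSigma_{\T*,\T*}^{-1}\hr_{\T*} = \frac{1}{n}\,\hSigma_{\T*,\T*}^{-1}\bX_{\T*}^\sT W\,.
\]
Since $W\sim\normal(0,\sigma^2\id_{n\times n})$ and $\bX$ is deterministic, this is a centered Gaussian vector in $\reals^{t_0}$ with covariance $(\sigma^2/n^2)\,\hSigma_{\T*,\T*}^{-1}\bX_{\T*}^\sT\bX_{\T*}\hSigma_{\T*,\T*}^{-1} = (\sigma^2/n)\,\hSigma_{\T*,\T*}^{-1}$, using $\bX_{\T*}^\sT\bX_{\T*}/n = \hSigma_{\T*,\T*}$.

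First I would compute, for each fixed $i\in\T*$, the variance of the $i$-th coordinate $[\hSigma_{\T*,\T*}^{-1}\hr_{\T*}]_i$. From the covariance computation above this variance equals $(\sigma^2/n)\,[\hSigma_{\T*,\T*}^{-1}]_{ii}$, which is at most $(\sigma^2/n)\,\|\hSigma_{\T*,\T*}^{-1}\|_2$ since a diagonal entry of a positive semidefinite matrix is bounded by its operator norm. Equivalently, writing $g_i \equiv e_i^\sT\hSigma_{\T*,\T*}^{-1}\bX_{\T*}^\sT W/n$, its variance is $(\sigma/n)^2\|\bX_{\T*}\hSigma_{\T*,\T*}^{-1}e_i\|_2^2 = (\sigma^2/n)[\hSigma_{\T*,\T*}^{-1}]_{ii}\le (\sigma^2/n)\|\hSigma_{\T*,\T*}^{-1}\|_2$.

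Next I would apply the standard Gaussian tail bound $\prob(|g_i|\ge t)\le 2e^{-t^2/(2\Var(g_i))}$ with the threshold $t = \sigma\sqrt{2c_1(\log p)/n}\,\|\hSigma_{\T*,\T*}^{-1}\|_2^{1/2}$. With the variance bound above, the exponent becomes $t^2/(2\Var(g_i)) \ge c_1\log p$, giving $\prob(|g_i|\ge t)\le 2p^{-c_1}$ for each fixed $i$. A union bound over the $t_0\le p$ coordinates in $\T*$ then yields $\prob(\|\hSigma_{\T*,\T*}^{-1}\hr_{\T*}\|_\infty\ge t)\le 2t_0\,p^{-c_1}\le 2p^{1-c_1}$, which is exactly the claimed bound~\eqref{eq:term2-DET}.

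There is essentially no serious obstacle here; the argument is a direct coordinatewise Gaussian concentration. The only point requiring minor care is the variance identity: one must correctly track that the covariance of $\hSigma_{\T*,\T*}^{-1}\hr_{\T*}$ is $(\sigma^2/n)\hSigma_{\T*,\T*}^{-1}$ rather than something involving $\hSigma_{\T*,\T*}^{-2}$, and then bound the diagonal entry by the operator norm. One should also confirm the union bound constant: using $t_0\le p$ turns the factor $2t_0$ into $2p$, and combined with $p^{-c_1}$ this gives $2p^{1-c_1}$ as stated.
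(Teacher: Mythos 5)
Your proof is correct and follows essentially the same route as the paper's: identify $\hSigma_{\T*,\T*}^{-1}\hr_{\T*} = (\bX_{\T*}^\sT\bX_{\T*})^{-1}\bX_{\T*}^\sT W$ as a centered Gaussian vector, bound each coordinate's variance by $(\sigma^2/n)\|\hSigma_{\T*,\T*}^{-1}\|_2$ via the diagonal-entry-versus-operator-norm inequality, and finish with a coordinatewise Gaussian tail bound plus a union bound over at most $p$ indices. The only cosmetic difference is that the paper phrases the variance computation as conditional on $\bX_{\T*}$ (so that the same lemma can be reused verbatim in the random-design argument of Lemma~\ref{lem:term4}, where the threshold is $\bX$-measurable and the conditional bound integrates to an unconditional one), whereas you treat $\bX$ as deterministic, which is all that is needed here.
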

Lemma~\ref{lem:term2-DET} is proved by noting that conditioned on $\bX_{\T*}$, $\hSigma_{\T*,\T*}^{-1} \hr_{\T*}$ is a Gaussian vector and then applying standard tail bound inequality.
The details are deferred to Section~\ref{proof:term2-DET}.

Using Lemma~\ref{lem:term2-DET} and the assumption $\eta \le c_2 \sqrt{C_{\min}}$, we get $\|u-\hu\|_{\infty} < c_2 \lambda$,
with probability at least $1 - 2p^{1-c_1}$. 

Putting all this together, Eqs.~\eqref{eq:z1->v1-DET} and~\eqref{eq:z2->v2-DET} hold 
simultaneously, with probability at least $1 - 4p^{1-c_1}$. This implies the thesis.

%
\subsection{Proof of Theorem~\ref{thm:GLSuppRecovery-DET}}
\label{proof:GLSuppRecovery-DET}

Recall that $T = \supp(\htheta^{n})$. On the event $\event \equiv \{T = \T*\}$, we have
\[
\thetaGL_{T} = (\bX_T^\sT \bX_T)^{-1} \bX_T^\sT (\bX_T \theta_{0,T} + W)
= \theta_{0,T} +  (\bX_T^\sT \bX_T)^{-1} \bX_T^\sT W\,,
\]
where the first equality holds since $T = \T* \supseteq S$ and thus $\theta_{0,T^c} = 0$.
Further note that $\thetaGL_i - \theta_{0,i}$, for $i\in T$, is a zero mean Gaussian vector
with variance 
\[
\sigma^2 \|e_i^\sT (\bX_{T}^\sT \bX_{T})^{-1} \bX_{T}^\sT\|^2
\le \sigma^2 \|\hSigma^{-1}_{T,T}\|_2/n \le \sigma^2/(n C_{\min})\,.
\]
Using tail bound inequality along with union bounding over $i \in [p]$, we get
\[
\prob\Big(\|\thetaGL_{T} - \theta_{0,T}\|_\infty \ge \mu; \event\Big) \le 2e^{-nC_{\min} \mu^2/2\sigma^2}\,.
\]
Also, under the assumptions of Theorem~\ref{thm:LassoSuppRecovery-DET}, $\prob(\event) \ge 1- 4 p^{1-c_1}$. Hence
\[
\prob\Big(\|\thetaGL_{T} - \theta_{0,T}\|_\infty \ge \mu\Big)
\le \prob\Big(\|\thetaGL_{T} - \theta_{0,T}\|_\infty \ge \mu; \event\Big) + \prob(\event^c)
\le 2e^{-nC_{\min} \mu^2/2\sigma^2} + 4p^{1-c_1}\,.
\]
Since $\thetaGL_{T^c} = \theta_{0,T^c} = 0$, we get $\|\thetaGL - \theta_0\|_\infty <\mu$, with probability at least
$1- 4p^{1-c_1} - 2e^{-nC_{\min} \mu^2/2\sigma^2}$.

Moreover, if $\|\thetaGL - \theta_0\| < \theta_{\min}/2$, then $|\thetaGL_i| > \theta_{\min}/2$ for $i \in S$
and $|\thetaGL_i| < \theta_{\min}/2$, for $i \in S^c$. Hence, the $s_0$ top entries of $\thetaGL$ (in modulus), 
returned by the Gauss-Lasso selector,
correspond to the true support $S$. Therefore,
\begin{align*}
\prob(\hat{S} = S) &\ge \prob(\|\thetaGL - \theta_0\|_\infty < \theta_{\min}/2)\\
& \ge 1- 4p^{1-c_1} - 2pe^{-n C_{\min} \theta_{\min}^2/8\sigma^2}
\ge 1 - 6p^{1-c_1/4}\,,
\end{align*}
where the last inequality follows from the facts $\theta_{\min} \ge c_2 \lambda$, and $ \eta \le c_2 \sqrt{C_{\min}}$. 
%
\section{Proof of Theorems~\ref{thm:LassoSuppRecovery} and \ref{thm:GLSuppRecovery}}
\label{proof:LassoSuppRecovery}
\addtocontents{toc}{\protect\setcounter{tocdepth}{1}}

By the condition \ref{Condition:ThetaMin} in the statement of the theorem, we have
\[
\lambda \le \frac{2}{3}\min_{i\in
  S}\left|\frac{\theta_{0,i}}{[\Sigma_{\T*,\T*}^{-1}v_{0,\T*}]_i}\right|
< \xistar\,,
\]
where the second inequality holds because of Lemma~\ref{lem:LimitXi0}.
Therefore, as a result of Lemma~\ref{lem:LimitXi0}, we have $\sign(\htheta^\infty(\lambda)) = \v*$ and that $\supp(\v*) = \T*$ contains the true support $S$. Applying Lemma~\ref{lem:infinity-supp} and using the generalized irrepresentability assumption, we have
\begin{align}
&\Big\|\Sigma_{{\T*}^c,\T*}\Sigma_{\T*,\T*}^{-1}v_{0,\T*}\Big\|_{\infty}\le
1-\eta\, ,\label{eq:v*1}\\
&v_{0,\T*} = \sign
\Big(\theta_{0,\T*}-\lambda \Sigma_{\T*,\T*}^{-1} v_{0,\T*}\Big)\, .\label{eq:v*2}
\end{align}
Moreover, by Lemma~\ref{lem:supp-lasso-DET}, $\sign(\htheta^n) = \v*$ if Eqs.~\eqref{eq:z1-DET} and~\eqref{eq:z2-DET} hold with $z = \v*$ and $T = \T*$, namely,
\begin{align}
\Big\|
\hSigma_{{\T*}^c,\T*}&\hSigma_{\T*,\T*}^{-1} v_{0,\T*} + \frac{1}{\lambda} (\hr_{{\T*}^c} - \hSigma_{{\T*}^c,\T*}\hSigma_{\T*,\T*}^{-1}\hr_{\T*})
\Big\|_{\infty}\le
1\, , \label{eq:z1->v1}\\
&v_{0,\T*} = \sign\Big(\theta_{0,T} - \hSigma^{-1}_{\T*,\T*} (\lambda v_{0,\T*} - \hr_{\T*}) \Big)\,.\label{eq:z2->v2}
\end{align}
The rest of the proof is devoted to show the validity of these
equations, with probability lower bounded as per Eq.~(\ref{MainThm:ProbabilityEstimate}).


\subsection{Proof of Eq.~(\ref{eq:z1->v1})}

It is immediate to see that Eq.~\eqref{eq:z1->v1} holds if the followings hold true:
\begin{align}
&\term_1 \equiv \big\|\hSigma_{{\T*}^c,\T*} \hSigma^{-1}_{\T*,\T*} v_{0,\T*} \big\|_{\infty} \le 1-\frac{\eta}{2}\,,\label{eq:mother-1}\\
&\term_2 \equiv \frac{1}{\lambda} \big\|\hr_{{\T*}^c} - \hSigma_{{\T*}^c,\T*} \hSigma^{-1}_{\T*,\T*} \hr_{\T*} \big\|_{\infty} \le \frac{\eta}{2}\,.\label{eq:mother-2}
\end{align}

In order to prove inequalities~\eqref{eq:mother-1} and \eqref{eq:mother-2}, it is useful to recall the following proposition from random matrix theory.
\begin{propo}[\cite{Szarek:survey,Wainwright2009LASSO,Vershynin-CS}]\label{pro:sigmadiff-opnorm}
For $k\le n$, let $\bX \in \reals^{n \times k}$ be a random matrix with i.i.d rows drawn from $\normal(0,\Sigma)$.
Then the following hold true for all $t\ge 1$ and $\tau \equiv 2(\sqrt{\frac{k}{n}} + t) + (\sqrt{\frac{k}{n}} + t)^2$\,.
\begin{itemize}
\item[(a)] If $\Sigma$ has maximum eigenvalue $\smax < \infty$, then
\begin{align*}
\prob\bigg(\|\frac{1}{n} \bX^\sT \bX- \Sigma\|_2 \ge \smax\, \tau \bigg)\le 2 e^{-nt^2/2}\,.
\end{align*}
\item[(b)] If $\Sigma$ has minimum eigenvalue $\smin > 0$, then
\begin{align*}
\prob\bigg(\|(\frac{1}{n} \bX^\sT \bX)^{-1}- \Sigma^{-1}\|_2 \ge \smin^{-1}\, \tau \bigg)\le 2 e^{-nt^2/2}\,.
\end{align*}
\end{itemize}
\end{propo}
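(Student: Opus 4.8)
The plan is to reduce both parts to the concentration of the extreme singular values of a standardized Gaussian matrix, via a whitening step. First I would write $\bX = \bZ\,\Sigma^{1/2}$, where $\bZ \in \reals^{n\times k}$ has i.i.d.\ $\normal(0,1)$ entries; this is legitimate because the rows of $\bX$ are i.i.d.\ $\normal(0,\Sigma)$. Then $\frac1n\bX^\sT\bX = \Sigma^{1/2}(\frac1n\bZ^\sT\bZ)\Sigma^{1/2}$, so that $\frac1n\bX^\sT\bX - \Sigma = \Sigma^{1/2}(\frac1n\bZ^\sT\bZ - \id_{k\times k})\Sigma^{1/2}$ and, inverting, $(\frac1n\bX^\sT\bX)^{-1} - \Sigma^{-1} = \Sigma^{-1/2}[(\frac1n\bZ^\sT\bZ)^{-1} - \id_{k\times k}]\Sigma^{-1/2}$. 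Submultiplicativity of the operator norm, together with $\|\Sigma^{1/2}\|_2^2 = \smax$ and $\|\Sigma^{-1/2}\|_2^2 = \smin^{-1}$, then gives $\|\frac1n\bX^\sT\bX - \Sigma\|_2 \le \smax\,\|\frac1n\bZ^\sT\bZ - \id_{k\times k}\|_2$ for part (a), and $\|(\frac1n\bX^\sT\bX)^{-1} - \Sigma^{-1}\|_2 \le \smin^{-1}\,\|(\frac1n\bZ^\sT\bZ)^{-1} - \id_{k\times k}\|_2$ for part (b). Both parts are thereby reduced to the isotropic case $\Sigma = \id_{k\times k}$.

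Next I would express the two white quantities through the singular values of $\bZ$. Since the eigenvalues of $\frac1n\bZ^\sT\bZ$ are exactly $\sigma_i(\bZ)^2/n$, we have $\|\frac1n\bZ^\sT\bZ - \id_{k\times k}\|_2 = \max_i |\sigma_i(\bZ)^2/n - 1|$, with the eigenvalues of the inverse equal to $n/\sigma_i(\bZ)^2$. The only substantive input is the Gaussian singular-value concentration of Davidson--Szarek: for $\bZ$ as above and any $s\ge 0$, $\prob(\sigma_{\max}(\bZ) \ge \sqrt{n}+\sqrt{k}+s) \le e^{-s^2/2}$ and $\prob(\sigma_{\min}(\bZ) \le \sqrt{n}-\sqrt{k}-s) \le e^{-s^2/2}$. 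I would apply these with $s = t\sqrt{n}$; by a union bound, the complement of the intersection of the two events has probability at most $2e^{-nt^2/2}$, which matches the claimed estimate exactly. On the intersection, writing $\delta \equiv \sqrt{k/n}+t$, one has $\sigma_{\max}(\bZ)/\sqrt{n} \le 1+\delta$ and $\sigma_{\min}(\bZ)/\sqrt{n} \ge 1-\delta$.

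The translation to $\tau$ is then elementary, and here the hypothesis $t \ge 1$ (hence $\delta \ge 1$ and $\tau = 2\delta+\delta^2 \ge 3$) does real work for part (a): for every $i$ we have $\sigma_i(\bZ)^2/n - 1 \le (1+\delta)^2 - 1 = \tau$, while $1 - \sigma_i(\bZ)^2/n \le 1 \le \tau$ trivially since $\sigma_i(\bZ)^2/n \ge 0$; thus $\max_i|\sigma_i(\bZ)^2/n - 1| \le \tau$, proving (a). Part (b) is handled by the identical scheme applied to the eigenvalues $n/\sigma_i(\bZ)^2$ of the inverse on the same good event; here one additionally uses the lower Davidson--Szarek bound to keep $\sigma_{\min}(\bZ)$ bounded away from zero (the regime in which the inverse estimate is informative), after which the same elementary manipulations give the bound $\smin^{-1}\tau$.

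I expect the entire difficulty to be concentrated in the one external ingredient, namely the sharp, dimension-dependent tail bounds for $\sigma_{\max}(\bZ)$ and $\sigma_{\min}(\bZ)$. These follow by combining Gordon's Gaussian comparison inequality (to control the means, $\E\,\sigma_{\max}(\bZ) \le \sqrt{n}+\sqrt{k}$ and $\E\,\sigma_{\min}(\bZ) \ge \sqrt{n}-\sqrt{k}$) with the Gaussian concentration of measure applied to the $1$-Lipschitz maps $\bZ \mapsto \sigma_{\max}(\bZ)$ and $\bZ \mapsto \sigma_{\min}(\bZ)$. Everything surrounding this input---the whitening reduction, the spectral identities, and the conversion of singular-value bounds into the operator-norm estimate $\tau$---is routine linear algebra. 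Since the proposition is quoted from \cite{Szarek:survey,Wainwright2009LASSO,Vershynin-CS}, I would cite this concentration fact rather than reprove it, and spend the writeup only on the whitening reduction and the elementary $\tau$-bookkeeping above.
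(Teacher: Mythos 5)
The paper itself gives no proof of this proposition; it is quoted from \cite{Szarek:survey,Wainwright2009LASSO,Vershynin-CS}, so there is no internal argument to compare against and your proposal must stand on its own. Your part (a) does: the whitening $\bX=\bZ\Sigma^{1/2}$, the Davidson--Szarek tails at deviation $s=t\sqrt{n}$ with a union bound giving $2e^{-nt^2/2}$, and the bookkeeping $\max_i|\sigma_i(\bZ)^2/n-1|\le(1+\delta)^2-1=\tau$ are all correct. (A small aside: $t\ge 1$ is not actually needed there, since for $\delta<1$ the lower deviation is $2\delta-\delta^2\le\tau$ anyway.)

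Part (b), however, has a genuine gap, located exactly at the step you wave at. On your good event the only lower control is $\sigma_{\min}(\bZ)\ge\sqrt{n}\,(1-\delta)$ with $\delta=\sqrt{k/n}+t$; since $t\ge1$ forces $\delta\ge1$, this bound is vacuous (its right-hand side is nonpositive), so nothing keeps $\sigma_{\min}(\bZ)$ away from zero on that event, and $\|(\tfrac{1}{n}\bZ^\sT\bZ)^{-1}-\id\|_2=\max_i|n/\sigma_i(\bZ)^2-1|$ can be arbitrarily large there. The failure is not only in the degenerate regime: even when $\delta<1$, ``the same elementary manipulations'' give for the inverse the upper deviation $\frac{1}{(1-\delta)^2}-1=\frac{2\delta-\delta^2}{(1-\delta)^2}$, which \emph{exceeds} $\tau=2\delta+\delta^2$ (at $\delta=0.1$ it is $\approx 0.235$ versus $\tau=0.21$). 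What the claimed bound actually requires is the event $\sigma_{\min}(\bZ)\ge\sqrt{n}/(1+\delta)$, because $1+\tau=(1+\delta)^2$; this is a strictly stronger requirement than the Davidson--Szarek lower-tail event at deviation $t$ (the gap between the two thresholds is $\sqrt{n}\,\delta^2/(1+\delta)>0$), so it cannot be produced by your union bound at probability $2e^{-nt^2/2}$. The standard repair is to write $\hSigma_{\bZ}^{-1}-\id=\hSigma_{\bZ}^{-1}(\id-\hSigma_{\bZ})$ with $\hSigma_{\bZ}=\bZ^\sT\bZ/n$ and pay the factor $\|\hSigma_{\bZ}^{-1}\|_2\le(1-\delta)^{-2}$ (valid only for $\delta<1$), which yields $\tau/(1-\delta)^2$ rather than $\tau$; this suffices for how the paper uses the result (it invokes it with $t=\sqrt{k/n}$ and absorbs all slack into the constant $8$ of Eq.~\eqref{eq:diff-opt-spec2}), but it is not the literal statement you set out to prove. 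As written, your sketch of (b) would fail, and it needs either this modified inequality with the attendant restriction on $\delta$, or a restatement of the proposition's constants.
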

We consider the particular choice of $t= \sqrt{k/n}$ which is useful for future reference. Since $k/n \le 1$, we get $\tau \le 8\sqrt{k/n}$ and therefore the specialized version
of Proposition~\ref{pro:sigmadiff-opnorm} reads:
\begin{eqnarray}
\prob\bigg(\|\frac{1}{n} \bX^\sT \bX- \Sigma\|_2 \ge 8\sqrt{k/n}\, \smax \bigg)&\le &2 e^{-k/2}\,, \label{eq:diff-opt-spec1}\\
\prob\bigg(\|(\frac{1}{n} \bX^\sT \bX)^{-1}- \Sigma^{-1}\|_2 \ge 8 \sqrt{k/n}\, \smin^{-1} \bigg)&\le& 2 e^{-k/2}\,. \label{eq:diff-opt-spec2}
\end{eqnarray}
We define the event $\event_1$ as
\begin{align*}
\event_1 &\equiv \bigg\{ \|(\hSigma_{\T*,\T*})^{-1} - \Sigma_{\T*,\T*}^{-1}\|_2 \le 8 \sqrt{t_0/n}\,C_{\min}^{-1} \bigg\}\, .
\end{align*}
Applying Eqs.~\eqref{eq:diff-opt-spec1},~\eqref{eq:diff-opt-spec2} to
$\bX_{\T*}$, we conclude that
\begin{align}
\prob(\event_1^c) \le 2e^{-t_0/2}\, .\label{eq:Event1Bond}
\end{align}
We now have in place all we need to bound the terms $\term_1$ and $\term_2$.


\subsubsection{Bounding $\term_1$}

To bound $\term_1$, we employ similar techniques to those used in~\cite[Theorem 3]{Wainwright2009LASSO} to verify strict dual feasibility.
The argument in~\cite{Wainwright2009LASSO} works under the irrepresentability condition (see Eq.~(26) therein) and we modify it to apply to the current setting, i.e.,
 the generalized irrepresentability condition.

We begin by conditioning on $\bX_{\T*}$. For $j\in {\T*}^c$, $x_j$ is a zero mean Gaussian vector and we can decompose
it into a linear correlated part plus an uncorrelated part as
\[
x_j^\sT = \Sigma_{j,\T*} \Sigma_{\T*,\T*}^{-1} \bX_{\T*}^\sT + \epsilon_j^\sT\,,
\]  
where $\epsilon_j \in \reals^n$ has i.i.d. entries distributed as $\epsilon_{ji} \sim \normal(0, \Sigma_{j,j} - \Sigma_{j,\T*} \Sigma_{\T*,\T*}^{-1} \Sigma_{\T*,j})$.

Letting $u = \hSigma_{{\T*}^c,\T*} \hSigma^{-1}_{\T*,\T*} v_{0,\T*} $, we write
\begin{align}
u_j &= x_j^\sT \bX_{\T*}(\bX_{\T*}^\sT \bX_{\T*})^{-1} v_{0,\T*}  \nonumber \\
&=  \Sigma_{j,\T*} (\Sigma_{\T*,\T*})^{-1} v_{0,\T*} 
+ \epsilon_j^\sT \bX_{\T*} (\bX_{\T*}^\sT \bX_{\T*})^{-1} v_{0,\T*}\,. \label{eq:uj}
\end{align}
The first term is bounded as $|\Sigma_{j,\T*} (\Sigma_{\T*,\T*})^{-1} v_{0,\T*}| \le 1-\eta$ as per Eq.~\eqref{eq:v*1}.
Let $m_j =  \epsilon_j^\sT \bX_{\T*}(\bX_{\T*}^\sT \bX_{\T*})^{-1}v_{0,\T*}$. Since $\Var(\epsilon_{ji}) \le \Sigma_{j,j} \le 1$, conditioned on $\bX_{\T*}$, $m_j$ is zero mean Gaussian with variance at most
\begin{align}
\Var(m_j) &\le  \|\bX_{\T*}(\bX_{\T*}^\sT \bX_{\T*})^{-1} v_{0,\T*} \|_2^2 \nonumber\\
&\le \frac{1}{n} v_{0,\T*}^\sT \Big(\frac{\bX_{\T*}^\sT \bX_{\T*}}{n}\Big)^{-1} v_{0,\T*} \nonumber \\
&\le \frac{1}{n} \|\hSigma_{\T*,\T*}^{-1}\|_2\, \|v_{0,\T*}\|^2 \,.\label{eq:var-mj}
\end{align}
Under the event $\event_1$, we have
\begin{align}\label{eq:event2-cons}
\|\hSigma^{-1}_{\T*,\T*}\|_2 \le \|\Sigma_{\T*,\T*}^{-1}\|_2 + \|\hSigma_{\T*,\T*}^{-1} - \Sigma_{\T*,\T*}^{-1}\|_2 \le (1+8\sqrt{t_0/n})\, C_{\min}^{-1} \le 9C_{\min}^{-1}\,,
\end{align}
and hence, $\Var(m_j) \le 9 t_0 /(n C_{\min})$. 
We now define the event $\event$ as 
\[
\event \equiv \bigg\{\max_{j\in T^c} |m_j| \ge \sqrt{\frac{18c_1\, t_0 \log p}{n\, C_{\min}}} \bigg\}\,.
\]
By the total probability rule, we have
\[
\prob(\event) \le \prob(\event; \event_1 ) + \prob(\event_1^c)\,.
\]
Using Gaussian tail bound and union bounding over $j\in {\T*}^c$, we obtain $\prob(\event; \event_1) \le 2p^{1-c_1}$.
Using the bound $\prob(\event_1^c) \le 2e^{-t_0/2}$, we arrive at:

%
\begin{align}\label{eq:T2-3}
\prob \left( \max_{j \in T^c} |m_j| > \sqrt{\frac{18 c_1\, t_0 \log p}{n\,C_{\min}}} \right) 
\le 2p^{1-c_1} +  2e^{-\frac{t_0}{2}}\,.
\end{align} 
Using this, together with Eq.~\eqref{eq:v*1},  in Eq.~\eqref{eq:uj}, we obtain that
the following holds true with probability at least $1-2p^{1-c_1} - 2e^{-t_0/2}$:
\begin{align}
\term_1 \le
1-\eta + \sqrt{ \frac{18 c_1\, t_0 \log p}{n\,C_{\min}}}\,. \label{eq:mother-1-proof}
\end{align}
It is easy to check that the this implies $\term_1<1-\eta/2$, for
$\lambda$ as claimed in Eq.~(\ref{eq:lambda_val}) provided $n\ge M_1t_0\log p$.


\subsubsection{Bounding $\term_2$}
We bound $\term_2$ by the same technique used in proving Eq.~\eqref{eq:z1->v1-DET}.
Let $m= (1/\lambda) (\hr_{{\T*}^c} - \hSigma_{{\T*}^c,\T*}\hSigma_{\T*,\T*}^{-1}\hr_{\T*})$. 
Plugging for $\hr$, we get
$m \equiv \bX_{\T*^c} \Pi_{\bX^\perp_{\T*}} W/(n\lambda)$.
Since $W \sim \normal(0,\sigma^2 \id_{n\times n})$, conditioned on
$\bX$,  the variable $m_j = x_j^\sT \Pi_{\bX^\perp_{\T*}} W /(n \lambda)$ is normal with variance at most
\[
(\frac{\sigma}{n\lambda})^2 \|\Pi_{\bX^\perp_{\T*}} x_j\|_2^2 \le (\frac{\sigma}{n\lambda})^2 \|x_j\|^2\,,
\]
where we used the contraction property of orthogonal projections.
Now, define the event $\event$ as follows.
\[
\event \equiv \bigg\{\|x_j\|^2 < 2n, \forall j\in [p] \bigg\}\,.
\]
Note that $\|x_j\|^2 \deq \Sigma_{j,j} Z$, where $Z$ is a chi-squared random variable with $n$ degrees of 
freedom. Using the standard chi-squared
tail bounds~\cite{Johnstone-chi2}, for a fixed $j$, we have $\|x_j\|^2 < 2\Sigma_{j,j}\, n \le 2n$, 
with probability at least $1 - e^{-n/10}$. Union bounding over $j \in [p]$, we obtain
$\prob(\event^c) \le p e^{-n/10}$. 

Under the event $\event$, we have $\Var(m_j) \le 2\sigma^2/(n\lambda^2)$. Employing the standard Gaussian tail bound along with union bounding over $j\in \T*^c$, we obtain
\begin{eqnarray}
\prob(\term_2 \ge \eta/2;\, \event ) \le 2pe^{-\frac{n\lambda^2\eta^2}{16\sigma^2}} = 2p^{1-c_1}\,.
\end{eqnarray}
Hence, 
\begin{eqnarray}
\prob(\term_2 \ge \eta/2) \le \prob(\term_2 \ge \eta/2;\, \event ) + \prob(\event^c) \le
2p^{1-c_1} + p e^{-\frac{n}{10}}  \,. \label{eq:mother-2-proof}
\end{eqnarray}
%

\subsection{Proof of Eq.~(\ref{eq:z2->v2})}

We next prove Eq.~\eqref{eq:z2->v2}. Given Eq.~\eqref{eq:v*2}, we need to show 
\begin{align*}
\sign\Big(\theta_{0,\T*} - \lambda \Sigma^{-1}_{\T*,\T*} v_{0,\T*}\Big) = \sign\Big(\theta_{0,\T*} - \hSigma^{-1}_{\T*,\T*} (\lambda v_{0,\T*} - \hr_{\T*})\Big)\,.
\end{align*}
Let $ u \equiv \theta_{0,\T*} - \lambda \Sigma^{-1}_{\T*,\T*}
v_{0,\T*}$, and $\hu \equiv \theta_{0,\T*} -
\hSigma^{-1}_{\T*,\T*} (\lambda v_{0,\T*} - \hr_{\T*})$.

By condition \ref{Condition:ThetaMin}, we have, for all
$i\in S$, $|u_i|\ge |\theta_{0,i}| - \lambda |[\Sigma^{-1}_{\T*,\T*}
v_{0,\T*}]_i|\ge c_2\lambda+(1/2) \lambda |[\Sigma^{-1}_{\T*,\T*}
v_{0,\T*}]_i|$. Further, for all $i\in\T*\setminus S$, we have $|u_i| = \lambda |[\Sigma^{-1}_{\T*,\T*}
v_{0,\T*}]_i|\ge c_2\lambda+(1/2) \lambda |[\Sigma^{-1}_{\T*,\T*}
v_{0,\T*}]_i|$. Summarizing, for all $i\in \T*$, we have
\begin{align*}
|u_i| \ge c_2\lambda +\frac{1}{2}\lambda |[\Sigma^{-1}_{\T*,\T*}
v_{0,\T*}]_i|\, .
\end{align*}
We will show that  $|u_i - \hu_i| <  c_2\lambda +(1/2)\lambda |[\Sigma^{-1}_{\T*,\T*}
v_{0,\T*}]_i|$ for all $i\in\T*$, with high probability, thus implying
$\sign(u_{\T*})=\sign(\hu_{\T*})$ as desired.
Since $|u_i - \hu_i| \le \lambda |[(\hSigma_{\T*,\T*}^{-1} - \Sigma_{\T*,\T*}^{-1}) v_{0,\T*} ]_i|+|[\hSigma^{-1}_{\T*,\T*} \hr_{\T*}]_i|$, it suffices
to show that
\begin{align}
&\term_3(i) \equiv \lambda |[(\hSigma_{\T*,\T*}^{-1} - \Sigma_{\T*,\T*}^{-1}) v_{0,\T*} ]_i\big| <
\frac{1}{2}\lambda |[\Sigma^{-1}_{\T*,\T*}
v_{0,\T*}]_i| \,\;\;\;\;\;\;\mbox{for all } i\in\T*\label{eq:mother-3},\\
&\term_4 \equiv \|\hSigma_{\T*,\T*}^{-1} \hr_{\T*}\|_{\infty} <
c_2\lambda\,.\label{eq:mother-4}
\end{align}
In the sequel, we provide probabilistic bounds on $\term_3(i)$ and $\term_4$.

\subsubsection{Bounding $\term_3(i)$}
\begin{lemma}\label{lem:mother-3}
Under the assumptions of Theorem~\ref{thm:LassoSuppRecovery}, for any
$c' > 1$, $t_0\ge 4$, we have
\begin{align*}
\prob\left\{ \exists i\in\T* \;\;\mbox{{\rm s.t. }}\big|[(\hSigma_{\T*,\T*}^{-1} - \Sigma_{\T*,\T*}^{-1}) v_{0,\T*}]_i\big| \ge 16\sqrt{\frac{c'c_*\, t_0\log p}{n}} \big|[\Sigma_{\T*,\T*}^{-1} v_{0,\T*}]_i\big| \right\}\le
 2e^{-\frac{t_0}{2}}+2p^{1-c'}\,,
\end{align*}
where $c_*\equiv (c_2C_{\min})^{-2}$.
\end{lemma}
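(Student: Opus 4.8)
The plan is to control the $i$-th coordinate of $(\hSigma_{\T*,\T*}^{-1} - \Sigma_{\T*,\T*}^{-1}) v_{0,\T*}$ by writing it as $e_i^\sT(\hSigma_{\T*,\T*}^{-1} - \Sigma_{\T*,\T*}^{-1}) v_{0,\T*}$ and relating its size to $|[\Sigma_{\T*,\T*}^{-1} v_{0,\T*}]_i| = |e_i^\sT \Sigma_{\T*,\T*}^{-1} v_{0,\T*}|$. The target inequality is a \emph{relative} bound, so the subtle point is that I cannot simply use $\|\hSigma^{-1}_{\T*,\T*} - \Sigma^{-1}_{\T*,\T*}\|_2 \cdot \|v_{0,\T*}\|_2$ in the numerator and divide; that would lose the entrywise structure on the right-hand side. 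Instead I would like to show that the ratio $|e_i^\sT(\hSigma^{-1} - \Sigma^{-1})v_{0,\T*}| / |e_i^\sT \Sigma^{-1} v_{0,\T*}|$ is small with high probability. The denominator is bounded below using the hypothesis of the theorem: condition~\ref{Condition:ThetaMin}, Eq.~\eqref{eq:ConditionTheta2}, forces $|[\Sigma_{\T*,\T*}^{-1} v_{0,\T*}]_i| \ge 2c_2$ for $i\in\T*\setminus S$, and on $S$ one can likewise produce a lower bound of order $c_2$; together with $C_{\min}$ this is where the factor $c_* = (c_2 C_{\min})^{-2}$ enters.

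\medskip

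The key steps, in order, are as follows. First, I would fix $i$ and write $\big|[(\hSigma^{-1}_{\T*,\T*} - \Sigma^{-1}_{\T*,\T*})v_{0,\T*}]_i\big| \le \|\hSigma^{-1}_{\T*,\T*} - \Sigma^{-1}_{\T*,\T*}\|_2 \,\|v_{0,\T*}\|_2 \le \|\hSigma^{-1}_{\T*,\T*} - \Sigma^{-1}_{\T*,\T*}\|_2 \sqrt{t_0}$, since $v_{0,\T*}\in\{\pm1\}^{t_0}$ gives $\|v_{0,\T*}\|_2 = \sqrt{t_0}$. Second, I would invoke Proposition~\ref{pro:sigmadiff-opnorm}(b) applied to $\bX_{\T*}$ with the specialized choice $t=\sqrt{t_0/n}$, so that on an event of probability at least $1-2e^{-t_0/2}$ (taking $c'$ in place of the exponent constant via the general $t$ in Proposition~\ref{pro:sigmadiff-opnorm}(b) to obtain the $p^{1-c'}$ term) one has $\|\hSigma^{-1}_{\T*,\T*} - \Sigma^{-1}_{\T*,\T*}\|_2 \le 8\sqrt{t_0/n}\,C_{\min}^{-1}$, or a variant scaled by $\sqrt{c'\log p}$ to pick up the $p^{1-c'}$ failure probability. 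Third, I would combine the numerator bound $8\sqrt{c' t_0 \log p/n}\,C_{\min}^{-1}\sqrt{t_0}$ (schematically) with the denominator lower bound $|[\Sigma^{-1}_{\T*,\T*}v_{0,\T*}]_i|\ge c_2$-order, and check that the resulting ratio is at most $16\sqrt{c' c_* t_0\log p/n}$ with $c_*=(c_2 C_{\min})^{-2}$, absorbing numerical constants.

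\medskip

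Probabilistically, I would split the event over failure of the operator-norm bound (handled by Proposition~\ref{pro:sigmadiff-opnorm}(b), contributing $2e^{-t_0/2}$) and failure of the tail estimate producing the $\log p$ factor (contributing $2p^{1-c'}$), and union-bound these two. The $\exists i\in\T*$ quantifier is handled by applying the deterministic operator-norm bound uniformly, so no additional union bound over the $t_0$ coordinates is needed once the single spectral event is controlled — the coordinatewise relative bound follows deterministically on that event from the per-$i$ denominator lower bound.

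\medskip

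\textbf{The main obstacle} I anticipate is getting the \emph{relative} (coordinatewise) bound rather than an absolute one: the crude step $\|\hSigma^{-1}-\Sigma^{-1}\|_2\|v_{0,\T*}\|_2$ in the numerator must be paired with a genuine per-coordinate lower bound $|[\Sigma^{-1}_{\T*,\T*}v_{0,\T*}]_i|\gtrsim c_2$ on the denominator, and it is exactly condition~\ref{Condition:ThetaMin} (Eq.~\eqref{eq:ConditionTheta2}) that supplies this uniformly over $i\in\T*\setminus S$, while on $S$ one needs a separate argument — I would verify that $|[\Sigma^{-1}_{\T*,\T*}v_{0,\T*}]_i|$ stays bounded below there too, or else restructure so that the claim is only needed where the denominator is large and the small-denominator coordinates are absorbed into $c_2\lambda$. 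Tracking how the constant $8$ from Proposition~\ref{pro:sigmadiff-opnorm} and the $\sqrt{c'\log p}$ inflation combine into the stated $16$ and $c_*=(c_2C_{\min})^{-2}$ is the bookkeeping I would expect to require the most care.
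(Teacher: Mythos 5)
Your core step fails quantitatively, and the failure is not a matter of bookkeeping. Bounding the numerator by $\|\hSigma^{-1}_{\T*,\T*}-\Sigma^{-1}_{\T*,\T*}\|_2\,\|v_{0,\T*}\|_2$ costs a factor $\|v_{0,\T*}\|_2=\sqrt{t_0}$ on top of the operator norm, and the operator norm itself is of order $\sqrt{t_0/n}\,C_{\min}^{-1}$ and cannot be made smaller: in Proposition~\ref{pro:sigmadiff-opnorm} one has $\tau\ge 2\sqrt{t_0/n}$ for \emph{every} choice of $t$, so your proposed ``variant scaled by $\sqrt{c'\log p}$'' only changes the failure probability, not the size of the bound (and indeed $\|\hSigma^{-1}_{\T*,\T*}-\Sigma^{-1}_{\T*,\T*}\|_2$ genuinely is of order $\sqrt{t_0/n}$ for Wishart matrices). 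Hence your numerator bound is of order $C_{\min}^{-1}t_0/\sqrt{n}$, and dividing by the constant denominator lower bound $2c_2$ gives a relative bound of order $\sqrt{c_*\,t_0^2/n}$ --- larger than the claimed $16\sqrt{c'c_*\,t_0\log p/n}$ by a factor of order $\sqrt{t_0/\log p}$. Your step 3 silently drops exactly this $\sqrt{t_0}$: the quantity $8\sqrt{c't_0\log p/n}\,C_{\min}^{-1}\sqrt{t_0}$, divided by $2c_2$, is $4\sqrt{c'c_*\,t_0^2\log p/n}$, not $16\sqrt{c'c_*\,t_0\log p/n}$. This is fatal in the only regime that matters: the lemma is invoked with $n\asymp M_3\,t_0\log p$ so that the right-hand side is at most $\tfrac{1}{2}\big|[\Sigma^{-1}_{\T*,\T*}v_{0,\T*}]_i\big|$; at that sample size your bound is of order $\sqrt{t_0/\log p}$, which diverges when $t_0\gg\log p$, and making it small would require $n\gtrsim t_0^2\log p$, destroying the near-optimal sample complexity the theorem is designed to achieve.

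The missing idea is an \emph{entrywise} (bilinear) concentration estimate, which the paper proves as Lemma~\ref{claim:svdI}. There one takes $a_i=e_i$, $b_i=v_{0,\T*}$, reduces to $\Sigma=\id$ by rotational invariance, and splits $\<a_i,(\hSigma^{-1}-\id)b_i\>$ into the component of $b_i$ along $a_i$, which contributes at most $8\sqrt{t_0/n}\,|\<a_i,\Sigma^{-1}b_i\>|$ on the spectral event (already a \emph{relative} term, hence harmless), plus a cross term of the form $\beta\<u_1,Du_2\>$ with $U$ Haar-distributed and $\|Du_2\|_2\le 8\sqrt{t_0/n}$. Concentration of measure on the sphere gives $|\<u_1,Du_2\>|\le\Delta$ up to probability $2\exp\{-n\Delta^2/256\}$; choosing $\Delta=16\sqrt{(c'\log p)/n}$ and union bounding over the $t_0$ coordinates (this union bound \emph{is} needed, contrary to your claim --- it is what produces the term $2t_0p^{-c'}\le 2p^{1-c'}$) yields a cross-term contribution $\Delta\,C_{\min}^{-1}\sqrt{t_0}=16C_{\min}^{-1}\sqrt{c'\,t_0\log p/n}$. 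The point is that the inner product of the two nearly orthogonal random unit vectors through $D$ is a factor $\sim 1/\sqrt{t_0}$ smaller than $\|D\|_2$, which is precisely the factor your Cauchy--Schwarz step loses. One secondary observation in your favor: your worry that the denominator bound $|[\Sigma^{-1}_{\T*,\T*}v_{0,\T*}]_i|\ge 2c_2$ is only guaranteed on $\T*\setminus S$ by Eq.~\eqref{eq:ConditionTheta2} is legitimate --- the paper's own proof applies it to all $i\in\T*$ --- but resolving it would not repair the quantitative gap above.
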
 
The proof of Lemma~\ref{lem:mother-3} is presented in Section~\ref{proof:mother-3}.

Applying this lemma, with probability at least $1-2e^{-t_0/2} -
2p^{1-c_1}$, we have $\term_3(i) <(1/2)\lambda |[\Sigma^{-1}_{\T*,\T*}
v_{0,\T*}]_i|$ provided 
\begin{align*}
16\sqrt{\frac{c_1c_*\, t_0\log p}{n}} \le \frac{1}{2}\, .
\end{align*}
i.e., for $n\ge M_3t_0\log p$.
%

\subsubsection{Bounding $\term_4$}

\begin{lemma}\label{lem:term4}
The following holds true.
\begin{align}\label{eq:mother-4-proof}
\prob\bigg(\term_4 \le 3\sigma\sqrt{\frac{2c_1\log p}{n\, C_{\min}}}\bigg) \ge 1 - 2e^{-\frac{t_0}{2}} - 2p^{1-c_1}\,.
\end{align}
\end{lemma}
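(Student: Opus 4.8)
The plan is to reduce this bound to the deterministic estimate of Lemma~\ref{lem:term2-DET} by conditioning on the design restricted to $\T*$, and then to control the random prefactor $\|\hSigma_{\T*,\T*}^{-1}\|_2$ using the event $\event_1$ already introduced above. The key starting observation is that, since $\hr_{\T*} = \bX_{\T*}^\sT W/n$ and $\hSigma_{\T*,\T*} = \bX_{\T*}^\sT\bX_{\T*}/n$, we have
\[
\hSigma_{\T*,\T*}^{-1}\hr_{\T*} = (\bX_{\T*}^\sT\bX_{\T*})^{-1}\bX_{\T*}^\sT W\, ,
\]
which, conditionally on $\bX_{\T*}$, is a zero-mean Gaussian vector with covariance $(\sigma^2/n)\,\hSigma_{\T*,\T*}^{-1}$. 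In particular each coordinate has variance at most $(\sigma^2/n)\,\|\hSigma_{\T*,\T*}^{-1}\|_2$, and this is exactly the quantity already controlled in the deterministic case. Applying the Gaussian tail bound coordinatewise together with a union bound over the $t_0\le p$ indices in $\T*$ gives, conditionally on every realization of $\bX_{\T*}$ (and hence unconditionally after integrating),
\[
\prob\Big(\term_4 \ge \sigma\sqrt{\tfrac{2c_1\log p}{n}}\,\|\hSigma_{\T*,\T*}^{-1}\|_2^{1/2}\Big) \le 2p^{1-c_1}\, ,
\]
which is precisely the statement of Lemma~\ref{lem:term2-DET}.

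Next I would replace the random prefactor $\|\hSigma_{\T*,\T*}^{-1}\|_2^{1/2}$ by the deterministic bound $3\,C_{\min}^{-1/2}$ available on $\event_1$. Indeed, Eq.~\eqref{eq:event2-cons} yields $\|\hSigma_{\T*,\T*}^{-1}\|_2 \le 9\,C_{\min}^{-1}$ on $\event_1$, so on this event
\[
\sigma\sqrt{\tfrac{2c_1\log p}{n}}\,\|\hSigma_{\T*,\T*}^{-1}\|_2^{1/2}
\le 3\sigma\sqrt{\tfrac{2c_1\log p}{n\,C_{\min}}}\, ,
\]
and therefore the inclusion $\{\term_4 \ge 3\sigma\sqrt{2c_1\log p/(n C_{\min})}\}\cap\event_1 \subseteq \{\term_4 \ge \sigma\sqrt{2c_1\log p/n}\,\|\hSigma_{\T*,\T*}^{-1}\|_2^{1/2}\}$ holds.

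Finally I would combine the two ingredients through a total-probability decomposition,
\[
\prob\Big(\term_4 \ge 3\sigma\sqrt{\tfrac{2c_1\log p}{n\,C_{\min}}}\Big)
\le \prob\Big(\term_4 \ge \sigma\sqrt{\tfrac{2c_1\log p}{n}}\,\|\hSigma_{\T*,\T*}^{-1}\|_2^{1/2}\Big) + \prob(\event_1^c)
\le 2p^{1-c_1} + 2e^{-t_0/2}\, ,
\]
where the last inequality uses $\prob(\event_1^c)\le 2e^{-t_0/2}$ from Eq.~\eqref{eq:Event1Bond}. Taking complements gives the claimed bound. The only genuinely new work relative to the deterministic setting is the control of $\|\hSigma_{\T*,\T*}^{-1}\|_2$, and this is already supplied by $\event_1$ through Proposition~\ref{pro:sigmadiff-opnorm}; I therefore expect no real obstacle here, the main point being simply to keep the conditioning on $\bX_{\T*}$ clean, so that the Gaussian tail bound is applied to the noise $W$ alone while the fluctuations of the design matrix are handled separately on the event $\event_1$.
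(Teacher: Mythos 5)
Your proof is correct and takes essentially the same route as the paper's: the paper likewise invokes Lemma~\ref{lem:term2-DET} (whose proof is exactly your conditional-Gaussian argument for $\hSigma_{\T*,\T*}^{-1}\hr_{\T*} = (\bX_{\T*}^\sT\bX_{\T*})^{-1}\bX_{\T*}^\sT W$), then replaces $\|\hSigma_{\T*,\T*}^{-1}\|_2^{1/2}$ by $3C_{\min}^{-1/2}$ on the event $\event_1$ via Eq.~\eqref{eq:event2-cons}, and concludes with the same union/total-probability bound using $\prob(\event_1^c)\le 2e^{-t_0/2}$.
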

Lemma~\ref{lem:term4} is proved in Section~\ref{proof:term4}. 

From the last lemma, it follows that  Eq.~\eqref{eq:mother-4}
holds with probability at least  $1 - 2e^{-\frac{t_0}{2}} - 2p^{1-c_1}$,
provided
\begin{align*}
3\sigma\sqrt{\frac{2c_1\log p}{n\, C_{\min}}}\le
c_2\lambda\, .
\end{align*}
Choosing $\lambda$ as per Eq.~\eqref{eq:lambda_val},
the latter is easily shown to follow from $\eta\le
c_2\sqrt{C_{\min}}$.

\subsection{Summary: Proof of Theorem \ref{thm:LassoSuppRecovery}}
 
 Now combining the bounds  on $\term_1$,\dots $\term_4$, we get that for $n \ge \max(M_1, M_3)\, t_0 \log p$, 
 Eqs.~\eqref{eq:z1->v1} and~\eqref{eq:z2->v2} hold simultaneously, 
 with probability at least $1 - p e^{-n/10} - 6e^{-t_0/2} - 8p^{1-c_1}$. This implies $\sign(\htheta^n(\lambda)) = v_0$.
\subsection{Proof of Theorem \ref{thm:GLSuppRecovery}}
\label{proof:GLSuppRecovery}
Note that the matrix $\bX_{\T*}$ is a random Gaussian matrix 
with rows drawn independently form $\normal(0,\Sigma_{\T*,\T*})$
(recall that $\T*$ is a deterministic set determined
by the population-level problem). Therefore,
$\|\hSigma_{\T*,\T*}^{-1}\|_2 \le 9 \|\Sigma_{\T*,\T*}^{-1}\|_2 \le 9
C_{\min}^{-1}$.
Using Theorem \ref{thm:LassoSuppRecovery} to bound the probability
that $T\neq \T*$,  the proof proceeds along the same lines as the proof of Theorem~\ref{thm:GLSuppRecovery-DET}. 

\addtocontents{toc}{\protect\setcounter{tocdepth}{1}}

\section*{Acknowledgements}
A.J. is supported by a Caroline and Fabian
Pease Stanford Graduate Fellowship.
This work was partially supported by the NSF CAREER award CCF-0743978, the NSF
grant DMS-0806211, and the grants AFOSR/DARPA FA9550-12-1-0411 and FA9550-13-1-0036.
%
\appendix

\section{Proof of technical lemmas}
\addtocontents{toc}{\protect\setcounter{tocdepth}{1}}
\subsection{Proof of Lemma~\ref{lem:Tsize-DET}}
\label{proof:Tsize-DET}

By a change of variables, it is easy to see that
$\thetaZN(\xi) =\theta_0+\xi\,\hu(\xi)$, where $\hu(\xi) =
\arg\min_{u\in\reals^p} F(u;\xi)$ and
\begin{align*}
F(u;\xi) \equiv \frac{1}{2}\<u,\hSigma u\> + \|u_{S^c}\|_1 + 
\Big(\|\xi^{-1}\theta_{0,S}+u_S\|_1-\|\xi^{-1}\theta_{0,S}\|_1\Big)\, .
\end{align*}

The rest of the proof is analogous to an argument in \cite{BickelEtAl}.
Since, by definition, $F(\hu;\xi)\le F(0;\xi)$, we have
\begin{align}
\frac{1}{2}\<\hu,\hSigma \hu\>+ \|\hu_{S^c}\|_1-\|\hu_{S}\|_1\le 0\label{eq:BoundFstar}
\end{align}
and hence $\|\hu_{S^c}\|_1\le\|\hu_{S}\|_1$. Using the definition of
$\hkappa$, with $J =S$, $c_0=1$, we have
\begin{align*}
0 &\ge\frac{1}{2}\hkappa(s_0,1)\|\hu\|_2^2+
\|\hu_{S^c}\|_1-\|\hu_{S}\|_1\\
&\ge \frac{1}{2}\hkappa(s_0,1) \|\hu_{S}\|_2^2-\|\hu_{S}\|_1\, ,
\end{align*}
and since $\|\hu_{S}\|_2^2\ge \|\hu_{S}\|_1^2/s_0$, we deduce that
\begin{align*}
\|\hu_{S}\|_1\le \frac{2s_0}{\hkappa(s_0,1)}\, .
\end{align*}
By Eq.~(\ref{eq:BoundFstar}), this implies in turn 
\begin{align}\label{eq:BoundSigma}
\<\hu,\hSigma \hu\>\le  \frac{4s_0}{\hkappa(s_0,1)} \, .
\end{align}

Now, consider the stationarity conditions of $F$. These imply
\begin{align*}
(\hSigma \hu)_i &= -\sign(\hu_{i})\,,\;\;\;\;\;\;\;\mbox{for
  $i\in T\setminus S$}.
\end{align*}
We therefore have
\begin{align*}
|T\setminus  S|\le \sum_{i\in T\setminus S}(\hSigma \hu)_i^2\le \|\hSigma \hu\|_2^2\le
\|\hSigma\|_2\<\hu,\hSigma \hu\>\, ,
\end{align*}
and our claim follows by substituting Eq.~(\ref{eq:BoundSigma}) in the
latter equation.

\subsection{Proof of Lemma~\ref{lem:LimitXi0-DET}}
\label{proof:LimitXi0-DET}
By a change of variables, it is easy to see that
$\thetaZN(\xi) =\theta_0+\xi\,\hu(\xi)$, where $\hu(\xi) =
\arg\min_{u\in\reals^p} F(u;\xi)$ and
\begin{align*}
F(u;\xi) \equiv \frac{1}{2}\<u,\hSigma u\> + \|u_{S^c}\|_1 + 
\Big(\|\xi^{-1}\theta_{0,S}+u_S\|_1-\|\xi^{-1}\theta_{0,S}\|_1\Big)\, .
\end{align*}
Notice that, for any $u\in \reals^p$, $\lim_{\xi\to 0}F(u;\xi) = \F*(u)$, where
\begin{align*}
\F*(u) \equiv \frac{1}{2}\<u,\hSigma u\> + \|u_{S^c}\|_1 + \<\sign(\theta_{0,S}),u_S\>\, .
\end{align*}
Indeed $F(u;\xi) = \F*(u)$ provided $\xi\le\min_{i\in S}|\theta_{0,i}/u_i|$. Further, $F(u;\xi)\ge \F*(u)$ for all $u$.

Let $\u*\equiv \arg\min_{u\in\reals^p} \F*(u)$, and set $\xistar \equiv \min_{i \in S} |\theta_{0,i}/u_{0,i}|$.
Then, for any $u\neq \u*$, and all
$\xi\in (0,\xistar)$, we have 
\begin{align*}
F(u;\xi)\ge \F*(u)> \F*(\u*) = F(\u*;\xi)\, .
\end{align*}
Hence $\u*$ is the unique minimizer of $F(u;\xi)$, i.e., $\hu(\xi) = \u*$ for all $\xi\in (0,\xistar)$.

It follows that $\thetaZN(\xi) = \theta_0+\xi\, \u*$ for all
$\xi\in (0,\xistar)$ and hence $\sign(\thetaZN(\xi)) = \v*$ and $\supp(\thetaZN(\xi))
=\T*$ where we set
\begin{eqnarray*}
v_{0,S} &\equiv &\sign(\theta_{0,S})\, ,\\
v_{0,S^c} &\equiv &\sign(u_{0,S^c})\, ,\\
\T* & \equiv & S\cup \supp(\u*)\, .
\end{eqnarray*}
Finally, the zero subgradient condition for $\u*$ reads $\hSigma \u* +
z = 0$, with $z_S = \sign(\theta_{0,S})$ and $z_{S^c}\in\partial\|
u_{0,S^c}\|_1$. In particular, $z_{\T*}=v_{0,\T*}$ and therefore $u_{0,\T*} =
-\hSigma_{\T*,\T*}^{-1}v_{\T*}$. This implies
\begin{align*}
\xistar \equiv \min_{i\in S}\left|\frac{\theta_{0,i}}{u_{0,i}}\right| =
\min_{i\in S} \left|\frac{\theta_{0,i}}{[\hSigma_{\T*,\T*}^{-1}v_{0,\T*}]_i}\right|\, .
\end{align*}

\subsection{Proof of Lemma~\ref{lem:ZN-supp}}
\label{app:ZN-supp}
Writing the zero-subgradient conditions for problem~\eqref{eq:ZNLassoEstimator}, we have
\[
\hSigma (\thetaZN - \theta_0) = -\xi u, \quad \quad u \in \partial \|\thetaZN\|_1.
\]
Given that $T \supseteq S$, we have $\theta_{0,T^c} = 0$, and thus
 \begin{align*}
 \hSigma_{T,T} (\thetaZN_T - \theta_{0,T}) &= -\xi u_T\,, \\
  \hSigma_{T^c,T} (\thetaZN_T - \theta_{0,T}) &= -\xi u_{T^c}\,. 
 \end{align*}
Solving for $\thetaZN_T - \theta_{0,T}$
 in terms of $u_{T}$, we obtain
 \begin{align*}
 &\hSigma_{T^c,T} \hSigma_{T,T}^{-1} u_T = u_{T^c}\,,\\
 &\thetaZN_T = \theta_{0,T} -\xi \hSigma_{T,T}^{-1}u_T\,.
 \end{align*}
 This proves the `only if' part noting that $u_T = \sign(\thetaZN_T) = v_T$, and $\|u_{T^c}\|_\infty \le 1$ since $u \in \partial \|\thetaZN\|_1$.
 
 Now suppose that Eqs.~\eqref{eq:thetaZNvT1} and~\eqref{eq:thetaZNvT2} hold true.

Let $\tilde{\theta}_T = \theta_{0,T} - \xi \hSigma_{T,T}^{-1} v_T$, and $\tilde{\theta}_{T^c} = 0$.
We prove that $\tilde{\theta} = \thetaZN$, by showing that
it satisfies the zero-subgradient condition. By Eq.~\eqref{eq:thetaZNvT2}, $v_T = \sign(\tilde{\theta}_T)$.
Define $u \in \reals^p$  by letting $u_T = v_T$ and $u_{T^c} =  \hSigma_{T^c,T} \hSigma_{T,T}^{-1} v_T$. Note that $\|u_{T^c}\|_\infty \le 1$ by Eq.~\eqref{eq:thetaZNvT1},
and so $u \in \partial \|\tilde{\theta}\|_1$. Moreover,
\begin{align*}
\hSigma_{T,T} (\tilde{\theta}_T - \theta_{0,T}) &= -\xi u_T\,\\
\hSigma_{T^c,T} (\tilde{\theta}_T - \theta_{0,T}) &= -\xi u_{T^c}\,,
\end{align*}
Combining the above two equations, we get the zero-subgradient condition for $(\tilde{\theta}, u)$. Therefore, $\tilde{\theta} = \thetaZN$, and $v = \sign(\thetaZN)$.

\subsection{Proof of Lemma~\ref{lem:supp-lasso-DET}}
\label{app:supp-lasso-DET}
The proof proceeds along the same lines as the proof of Lemma~\ref{lem:ZN-supp}.
We begin with proving the `only if' part. The zero-subgradient condition for Problem~\ref{eq:LassoEstimator}
reads:
\[
-\frac{1}{n} \bX^\sT(Y - \bX \htheta^{n}) +\lambda u = 0\,, \quad \quad u \in \partial\|\htheta^{n}\|_1\,.
\]
Plugging for $Y = \bX \theta_0 + W$ and $\hr = (\bX^\sT W/n)$ in the above equation, we arrive at:
\[
\hSigma(\htheta^{n} - \theta_0) = \hr - \lambda u \,.
\]
Since $T \supseteq S$, $\theta_{0,T^c} = 0$, and writing the above equation for indices in $T$ and $T^c$ separately,
we obtain
\begin{align*}
\hSigma_{T^c,T}(\htheta^{n}_T - \theta_{0,T}) &= \hr_{T^c} - \lambda u_{T^c}\,,\\
\hSigma_{T,T} (\htheta^{n}_T - \theta_{0,T}) &= \hr_{T} - \lambda u_{T}\,.
\end{align*}
Solving for $\htheta^{n}_T - \theta_{0,T}$
from the second equation, we get
 \begin{align*}
\hSigma_{T^c,T} \hSigma_{T,T}^{-1} u_T &+ \frac{1}{\lambda} (\hr_{T^c} - \hSigma_{T^c,T} \hSigma_{T,T}^{-1} \hr_T) 
 = u_{T^c}\,,\\
 \htheta^{n}_T &= \theta_{0,T} - \hSigma_{T,T}^{-1}(\lambda u_T - \hr_T)\,.
 \end{align*}
 This proves Eqs.~\eqref{eq:z1-DET} and~\eqref{eq:z2-DET}, since $u_T = \sign(\htheta^{n}_T) = z_T$
 and $\|u_{T^c}\|_\infty \le 1$.
 
 We next prove the other direction. Suppose that Eqs.~\eqref{eq:z1-DET} and~\eqref{eq:z2-DET}
 hold true. Let $\tilde{\theta}_T = \theta_{0,T} - \hSigma_{T,T}^{-1} (\lambda z_T - \hr_T)$, and $\tilde{\theta}_{T^c} = 0$.
We prove that $\tilde{\theta} = \htheta^{n}$, by showing that
it satisfies the zero-subgradient condition. By Eq.~\eqref{eq:z2-DET}, $z_T = \sign(\tilde{\theta}_T)$.
Define $u \in \reals^p$  by letting $u_T = z_T$ and $u_{T^c} =  \hSigma_{T^c,T} \hSigma_{T,T}^{-1} z_T+ 
(\hr_{T^c} - \hSigma_{T^c,T} \hSigma^{-1}_{T,T} \hr_T) /\lambda$. Note that $\|u_{T^c}\|_\infty \le 1$ by Eq.~\eqref{eq:z2-DET},
and so $u \in \partial \|\tilde{\theta}\|_1$. Moreover,
\begin{align*}
\hSigma_{T,T} (\tilde{\theta}_T - \theta_{0,T}) &= -(\lambda u_T - \hr_T)\,\\
\hSigma_{T^c,T} (\tilde{\theta}_T - \theta_{0,T}) &= -(\lambda u_{T^c} - \hr_{T^c})\,,
\end{align*}
Combining the above two equations, we get the zero-subgradeint condition for $(\tilde{\theta}, u)$. Therefore, $\tilde{\theta} = \htheta^{n}$, and $z = \sign(\htheta^{n})$.

\subsection{Proof of Lemma~\ref{lem:term2-DET}}
\label{proof:term2-DET}
Let $m = \hSigma_{\T*,\T*}^{-1} \hr_{\T*} = (\bX_{\T*}^\sT \bX_{\T*})^{-1} \bX_{\T*}^\sT W$. Conditioned on $\bX_{\T*}$, $m_i$ is a zero mean
Gaussian vector with variance $\sigma^2 \|e_i^\sT
(\bX_{\T*}\bX_{\T*})^{-1} \bX_{\T*}^\sT\|^2$. By a Gaussian  tail bound, we get
\[
\prob\Big( |m_i| \ge \sqrt{2c_1\log p}\, \sigma \|e_i^\sT (\bX_{\T*}^\sT \bX_{\T*})^{-1} \bX_{\T*}^\sT\| \Big) \le 2p^{-c_1}\,.
\]
Further, notice that $\|e_i^\sT (\bX_{\T*}^\sT \bX_{\T*})^{-1} \bX_{\T*}^\sT\|^2 \le \|\hSigma^{-1}_{\T*,\T*}\|_2/n$. By union bounding over $i=1,\dotsc,p$, we have
\[
\prob\Big( \|m\|_\infty \ge \sigma \sqrt{\frac{2c_1\log p}{n}}\, \|\hSigma^{-1}_{\T*,\T*}\|_2^{1/2} \Big) \le 2p^{1-c_1}\,.
\]
%

%

\subsection{Proof of Lemma~\ref{lem:mother-3}}
\label{proof:mother-3}

We begin by stating and proving  a lemma that is similar to
Lemma~5 in~\cite{Wainwright2009LASSO}, but provides a stronger control. 
\begin{lemma}\label{claim:svdI}
Let $\bZ\in\reals^{n\times k}$ be a random matrix with i.i.d. Gaussian
rows with zero mean and covariance $\Sigma$, with $k\ge 4$. Further let $a_1,\dots,
a_M\in\reals^k$ and $b_1,\dots,b_M\in\reals^k$ be non-random vectors. 
Then, letting $\hSigma_{\bZ}\equiv \bZ^{\sT}\bZ/n$, we have,  for all
$\Delta>0$:
\begin{align}
\prob\left\{\exists i\in [M]\;\; \mbox{{\rm s.t. }} \Big| \<a_i,
(\hSigma^{-1}_{\bZ} - \Sigma^{-1})b_i\>\Big|  \ge 8\sqrt{\frac{k}{n}}
|\<a_i,\Sigma^{-1}b_i\>|+\Delta\,\|\Sigma^{-1/2}a_i\|_2 
\|\Sigma^{-1/2}b_i\|_2 \right\}\nonumber\\ \le 2 e^{-\frac{k}{2}} + 2M
\,\exp\Big\{-\frac{n\Delta^2}{256}\Big\}\, .\label{eq:EntriesBound}
\end{align}
\end{lemma}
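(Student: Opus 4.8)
The plan is to reduce to the isotropic case $\Sigma = I$ and then split each bilinear form into a \emph{parallel} piece, controlled by the operator-norm deviation bound, and an \emph{orthogonal} piece, controlled by a sharp conditional Gaussian estimate. First I would write $\bZ = \tbZ\,\Sigma^{1/2}$ with $\tbZ$ having i.i.d. $\normal(0,I_k)$ rows, so that $\hSigma_{\bZ}^{-1} - \Sigma^{-1} = \Sigma^{-1/2}\big(\hSigma_{\tbZ}^{-1} - I\big)\Sigma^{-1/2}$ with $\hSigma_{\tbZ} = \tbZ^\sT\tbZ/n$. Setting $\tilde a_i = \Sigma^{-1/2}a_i$ and $\tilde b_i = \Sigma^{-1/2}b_i$, the quantity to control becomes $\<\tilde a_i, (\hSigma_{\tbZ}^{-1}-I)\tilde b_i\>$, while $\<a_i,\Sigma^{-1}b_i\> = \<\tilde a_i,\tilde b_i\>$, $\|\Sigma^{-1/2}a_i\|_2 = \|\tilde a_i\|_2$, and $\|\Sigma^{-1/2}b_i\|_2 = \|\tilde b_i\|_2$; so it suffices to prove the bound for $\Sigma = I$.

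Next I would fix the common good event $\cG \equiv \{\|\hSigma_{\tbZ}^{-1} - I\|_2 \le 8\sqrt{k/n}\}$, which by the specialized bound~\eqref{eq:diff-opt-spec2} (with $\smin = 1$) satisfies $\prob(\cG^c)\le 2e^{-k/2}$; crucially $\cG$ is rotationally invariant. For each fixed $i$, rotational invariance of the law of $\hSigma_{\tbZ}$ lets me assume $\tilde a_i = \|\tilde a_i\|\,e_1$ and $\tilde b_i = \beta_1 e_1 + \beta_2 e_2$, with $\beta_1 = \<\tilde a_i,\tilde b_i\>/\|\tilde a_i\|$ and $\beta_2 = \|\tilde b_i^\perp\|_2$ the norm of $\tilde b_i^\perp = \tilde b_i - (\<\tilde a_i,\tilde b_i\>/\|\tilde a_i\|^2)\tilde a_i$, so that $\<\tilde a_i,(\hSigma_{\tbZ}^{-1}-I)\tilde b_i\> = \<\tilde a_i,\tilde b_i\>\big((\hSigma_{\tbZ}^{-1})_{11}-1\big) + \|\tilde a_i\|\,\beta_2\,(\hSigma_{\tbZ}^{-1})_{12}$. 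The parallel term is deterministically bounded on $\cG$ by $8\sqrt{k/n}\,|\<\tilde a_i,\tilde b_i\>|$, since $|(\hSigma_{\tbZ}^{-1})_{11}-1|\le\|\hSigma_{\tbZ}^{-1}-I\|_2$; this produces the first term of the claim. As $\beta_2\le\|\tilde b_i\|$, it remains to show that the off-diagonal entry obeys $|(\hSigma_{\tbZ}^{-1})_{12}|\le\Delta$ with the stated Gaussian tail.

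For the off-diagonal entry I would invoke the partial-correlation representation: writing $\tilde z_1,\tilde z_2$ for the residuals of the first two columns of $\tbZ$ after projecting out the remaining $k-2$ columns, the $\{1,2\}$ principal block of $(\tbZ^\sT\tbZ)^{-1}$ equals the inverse of the $2\times2$ Gram matrix of $\tilde z_1,\tilde z_2$, whence the exact identity $(\hSigma_{\tbZ}^{-1})_{12} = -(r/B)\,(\hSigma_{\tbZ}^{-1})_{11}$ with $r = \<\tilde z_1,\tilde z_2\>$, $B = \|\tilde z_2\|_2^2$, and $A = \|\tilde z_1\|_2^2$. On $\cG$ the eigenvalues of that $2\times2$ block lie in $[1/2,2]$, which forces $(\hSigma_{\tbZ}^{-1})_{11}\le 2$ and, via its determinant $n^2/(AB-r^2)\le 4$, the lower bound $B\ge n/8$; hence on $\cG$ one has $|(\hSigma_{\tbZ}^{-1})_{12}|\le 2|r|/B$. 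Finally, conditioning on columns $3,\dots,k$ and on $\tilde z_2$, the variable $r = \<z_1,\tilde z_2\>$ is exactly $\normal(0,B)$ and independent of the conditioning, so on $\{B\ge n/8\}$ one gets $\prob(|r|\ge \Delta B/2)\le 2\exp(-\Delta^2 B/8)\le 2\exp(-n\Delta^2/64)$, the looser constant $256$ in the statement absorbing the rotation and the slack in $\cG$. A union bound over $i\in[M]$, together with $\prob(\cG^c)\le 2e^{-k/2}$ counted once, yields~\eqref{eq:EntriesBound}.

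The main obstacle is the orthogonal piece: a naive operator-norm estimate on $\hSigma_{\tbZ}^{-1}-I$ would only give $|(\hSigma_{\tbZ}^{-1})_{12}|\lesssim\sqrt{k/n}$, far too weak to match the $\Delta$-controlled Gaussian scale $1/\sqrt n$ needed when $\tilde a_i\perp\tilde b_i$. Extracting the correct $1/\sqrt n$ fluctuation for a \emph{fixed} pair of directions is precisely what the identity $(\hSigma_{\tbZ}^{-1})_{12} = -(r/B)(\hSigma_{\tbZ}^{-1})_{11}$ and the ensuing conditional Gaussianity of $r$ accomplish. The delicate point is to verify that the required lower bound $B\ge cn$ follows from the rotationally invariant event $\cG$ itself, rather than from an extra per-$i$ chi-square concentration event, which would introduce a factor $M$ into the $e^{-k/2}$ term and spoil the union bound.
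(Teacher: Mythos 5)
Your overall architecture matches the paper's: reduce to $\Sigma=\id$, introduce the rotationally invariant good event $\cG=\{\|\hSigma^{-1}-\id\|_2\le 8\sqrt{k/n}\}$, rotate so that $a_i\propto e_1$ and $b_i\in{\rm span}(e_1,e_2)$, bound the ``parallel'' term $(\hSigma^{-1})_{11}-1$ deterministically on $\cG$, and union bound over $i$ only in the fluctuation term. Where you genuinely diverge is the off-diagonal entry: the paper writes $\hSigma^{-1}-\id=UDU^{\sT}$ with $U$ Haar-distributed and independent of $D$, and applies spherical concentration to $\<u_1,Du_2\>$ using only $\|Du_2\|_2\le 8\sqrt{k/n}$ on $\cG$; you instead use the Schur-complement identity $(\hSigma^{-1})_{12}=-(r/B)(\hSigma^{-1})_{11}$ with $r=\<\tilde z_1,\tilde z_2\>$, $B=\|\tilde z_2\|_2^2$, and the conditional law $r\sim\normal(0,B)$. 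That identity and the conditional tail computation are correct.

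The gap is the step ``on $\cG$ the eigenvalues of that $2\times 2$ block lie in $[1/2,2]$.'' By eigenvalue interlacing, $\cG$ only places the eigenvalues of the block in $[1-\eps,1+\eps]$ with $\eps=8\sqrt{k/n}$, so your claim --- and its consequences $(\hSigma^{-1})_{11}\le 2$ and $B\ge n/8$ --- requires $8\sqrt{k/n}\le 1/2$, i.e.\ $n\ge 256\,k$, an assumption nowhere in the lemma, which is stated for all $k\ge 4$, $n\ge k$, $\Delta>0$. The lemma is not vacuous outside that regime: for instance $M=1$, $k=20$, $n=1000$, $\Delta=0.8$ gives a right-hand side of about $0.17$, yet there $\eps\approx 1.13$, so $\cG$ yields no positive lower bound on $\lambda_{\min}$ of the block or on $B$ (positivity of $\hSigma^{-1}$ alone gives $B>0$ and nothing more), and the fallback $|(\hSigma^{-1})_{12}|\le\eps$ on $\cG$ is useless since $\Delta<\eps$. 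This is precisely the ``delicate point'' you flagged at the end, but your resolution of it is the false step: the lower bound $B\ge n/8$ does \emph{not} follow from $\cG$ in general, and repairing it with a per-index chi-square event on $B\sim\chi^2_{n-k+2}$ would, as you note, inject an $M$-dependent term that cannot be absorbed into $2e^{-k/2}+2Me^{-n\Delta^2/256}$ when $n-k$ is small. The paper's Haar/sphere mechanism avoids needing any lower bound at all --- it only uses the upper bound $\|D\|_2\le\eps$ and the tail $2\exp\{-(k-2)\Delta^2 n/(128k)\}\le 2\exp\{-n\Delta^2/256\}$ for $k\ge4$ --- and is therefore valid for every ratio $k/n$. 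As written, your argument proves the lemma only under the additional hypothesis $n\ge 256k$ (which happens to hold where the paper applies the lemma, but is weaker than the stated claim).
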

\begin{proof}
First notice that $\bZ= \widetilde{\bZ}\Sigma^{1/2}$ with
$\widetilde{\bZ}\in\reals^{n\times k}$  a random matrix with
i.i.d. standard Gaussian entries $Z_{ij}\sim\normal(0,1)$. By
substituting in the statement of the theorem, it is easy to check that
we only need to prove our claim in the case $\Sigma = \id_{k\times k}$
(i.e., for $\bZ$ with i.i.d. entries), which we shall assume hereafter.

Defining the event $\cE_* = \{\|\hSigma^{-1}-\id\|_2\le
8\sqrt{k/n}\}$, we have, by Eq.~(\ref{eq:diff-opt-spec2}) and the
union bound,
\begin{align*}
\prob&\left\{\exists i\in [M] \mbox{ s.t. } \Big| \<a_i,
(\hSigma^{-1} - \id)b_i\>\Big|  \ge  8\sqrt{\frac{k}{n}}
|\<a_i,b_i\>|+\Delta\,\|a_i\|_2 
\|b_i\|_2 \right\} \le\\ 
&2\,e^{-k/2} + M\, \max_{i\in [M]}
\prob\left\{ \big| \<a_i,
(\hSigma^{-1} - \id)b_i\>\big|  \ge 8\sqrt{\frac{k}{n}}
|\<a_i,b_i\>|+\Delta\,\|a_i\|_2 
\|b_i\|_2 ;\;\cE_*\right\} 
\end{align*}
We can now concentrate on the last probability. Let $\alpha \equiv
|\<a_i,b_i\>|$ and 
$\beta \equiv (\|a_i\|_2^2\|b_i\|_2^2-\<a_i,b_i\>^2)^{1/2}$. Since
$\hSigma$ is distributed as $R\hSigma R^{\sT}$ for any orthogonal
matrix $R$, we have
\begin{align*}
\<a_i,(\hSigma^{-1} - \id)b_i\> \ed \alpha\<e_1,(\hSigma^{-1} -
\id)e_1\> +\beta\<e_1,(\hSigma^{-1} -\id)e_2\>\, ,
\end{align*}
where $\ed$ denotes equality in distribution. Under the event $\cE_*$, we
have $|\alpha\<e_1,(\hSigma^{-1} -\id)e_1\>|\le 8\alpha\sqrt{k/n}$.
 Further $(\hSigma^{-1} -\id) = UDU^{\sT}$ with $U$
a uniformly random orthogonal matrix (with respect to Haar measure on the manifold of orthogonal matrices). Letting $u_1$, $u_2$ denote the
first two rows of $U$ we then have
\begin{align*}
\prob\left\{ \big| \<a_i,
(\hSigma^{-1} - \id)b_i\>\big|  \ge 8\sqrt{\frac{k}{n}}
|\<a_i,b_i\>|+\Delta\,\|a_i\|_2 
\|b_i\|_2 ;\;\cE_*\right\} \le \prob\{|\<u_1,Du_2\>|\ge\Delta;\;
\cE_*\}\, .
\end{align*}
Notice that conditioned on $u_2$ and $D$, $u_1$ is uniformly random on
a $(k-1)$-dimensional sphere. Further, letting $v_2=Du_2$, we have
$\|v_2\|_2\le 8\sqrt{k/n}$. Hence, by isoperimetric inequalities on
the sphere \cite{Ledoux}, we obtain
\begin{align*}
\prob\{|\<u_1,Du_2\>|\ge\Delta;\;
\cE_*\}&\le \sup_{\|v_2\|\le 8\sqrt{k/n}}\prob\{|\<u_1,v_2\>|\ge\Delta
|\, v_2\}\\
&\le 2\exp\Big\{-\frac{(k-2)\Delta^2}{128 k/n}\Big\}\le
2\exp\Big\{-\frac{n\Delta^2}{256}\Big\}\, ,
\end{align*}
where the last inequality holds for all $k\ge 4$. The proof is
completed by substituting this inequality in the expressions above.
\end{proof}

We are now in position to prove Lemma~\ref{lem:mother-3}.
\begin{proof}[Proof (Lemma \ref{lem:mother-3}).]
We apply Lemma \ref{claim:svdI} to $\hSigma=\hSigma_{\T*,\T*}$,
$M=t_0$, $a_i=e_i$ and $b_i=v_{0,\T*}$ for $i\in\{1,\dots,t_0\}$. We get
\begin{align*}
\prob\left\{ \exists i\in\T*\;\; \mbox{ s.t. }\big|[(\hSigma_{\T*,\T*}^{-1}
  - \Sigma_{\T*,\T*}^{-1}) v_{0,\T*}]_i\big| \ge
8\sqrt{\frac{t_0}{n}}\big|[\Sigma_{\T*,\T*}^{-1} v_{0,\T*}]_i\big| +
\Delta\|\Sigma_{\T*,\T*}^{-1/2}e_i\|_2\|\Sigma_{\T*,\T*}^{-1/2} v_{0,\T*}\|_2\right\}\le\\
2\, e^{-t_0/2}+2t_0\,\exp\Big\{-\frac{n\Delta^2}{256}\Big\}\, .
\end{align*}
Note that
$\|\Sigma_{\T*,\T*}^{-1/2}e_i\|_2\|\Sigma_{\T*,\T*}^{-1/2}v_{0,\T*}\|_2\le
C_{\min}^{-1} \|e_i\|_2\|v_{0,\T*}\|_2= C_{\min}^{-1}
\sqrt{t_0}$.
Further $|[\Sigma_{\T*,\T*}^{-1}v_{0,\T*}]_i\big| \ge 2c_2$, and hence
$\|\Sigma_{\T*,\T*}^{-1/2}e_i\|_2\|\Sigma_{\T*,\T*}^{-1/2}v
_{0,\T*}\|_2\le (1/2)\sqrt{c_*t_0}\,
|[\Sigma_{\T*,\T*}^{-1}v_{0,\T*}]_i\big|$.
We therefore get
\begin{align*}
\prob\left\{ \exists i\in\T*\mbox{ s.t. }\big|[(\hSigma_{\T*,\T*}^{-1}
  - \Sigma_{\T*,\T*}^{-1}) v_{0,\T*}]_i\big| \ge \Big(
8\sqrt{\frac{t_0}{n}} +\frac{\Delta}{2}\sqrt{c_*t_0}\Big)\big|[\Sigma_{\T*,\T*}^{-1} v_{0,\T*}]_i\big| \right\}\le\\
2\, e^{-t_0/2}+2t_0\,\exp\Big\{-\frac{n\Delta^2}{256}\Big\}\, .
\end{align*}
The proof is completed by taking $\Delta=16\sqrt{(c'\log p)/n}$.
\end{proof}

\subsection{Proof of Lemma~\ref{lem:term4}}
\label{proof:term4}
By Lemma~\ref{lem:term2-DET}, we have
\[
\prob\Big( \|\hSigma_{\T*,\T*}^{-1} \hr_{\T*}\|_\infty \ge \sigma \sqrt{\frac{2c_1\log p}{n}}\, \|\hSigma^{-1}_{\T*,\T*}\|_2^{1/2} \Big) \le 2p^{1-c_1}\,.
\]
Recalling Eq.~\eqref{eq:event2-cons}, under the event $\event_1$ we have $\|\hSigma^{-1}_{\T*,\T*}\|_2 \le 9C_{\min}^{-1}$.
Since $\prob(\event_1^c) \le 2e^{-t_0/2}$, we arrive at:
\[
\prob\bigg( \|\hSigma_{\T*,\T*}^{-1} \hr_{\T*}\|_\infty \ge 3\sigma \sqrt{\frac{2c_1\log p}{n\, C_{\min}}} \bigg) \le  2p^{1-c_1} + 2e^{-\frac{t_0}{2}}\,. 
\]
%

%
%

\section{Generalized irrepresentability vs. irrepresentability}
\label{app:Example}

In this appendix we discuss the example provided in Section~\ref{subsec:example}
in more details. The objective is to develop some intuition on the domain of  validity
of generalized irrepresentability, and compare it with the standard
irrepresentability condition.
%
%

As explained in Section~\ref{subsec:example}, let $S =\supp(\theta_0) = \{1,\dots,s_0\}$ and consider
the following covariance matrix:
\begin{align*}
\Sigma_{ij} = \begin{cases}
1 & \mbox{if $i=j$},\\
a & \mbox{if $i=p, j\in S$ or $i\in S, j=p$,}\\
0 & \mbox{otherwise.}
\end{cases}
\end{align*}
Equivalently,
\begin{align*}
\Sigma = \id_{p\times p} + a\big(e_p u_S^{\sT} +u_Se^{\sT}_p\big)\, ,
\end{align*}
where $u_S$ is
the vector with entries $(u_S)_i=1$ for $i\in S$ and $(u_S)_i=0$ for
$i\not\in S$. It is easy to check that $\Sigma$ is strictly
positive definite for $a\in (-1/\sqrt{s_0},+1/\sqrt{s_0})$. By 
redefining the $p$-th covariate, we can assume, without loss of
generality, $a\in [0,+1/\sqrt{s_0})$.  
We will further assume $\sign(\theta_{0,i})=+1$ for all $i\in S$. 

This example captures the case of a single confounding variable,
i.e., of an irrelevant covariate that correlates strongly with the
relevant covariates, and with the response variable.

We will show that the Gauss-Lasso has a significantly broader domain
of validity with respect to the simple Lasso.
\begin{claim}
Consider the Gaussian design defined above, and suppose that
$a > 1/s_0$. Then for any regularization parameter $\lambda$ and 
for any sample size $n$, the probability of correct signed support 
recovery with Lasso is at most $1/2$. 
(and is not guaranteed with high probability
unless $a\in [0,(1-\eta)/s_0]$, for some constant $\eta> 0$.
 
On the other hand, Theorem~\ref{thm:GLSuppRecovery} implies correct support
recovery with the Gauss-Lasso from $n=\Omega(s_0\log p)$ samples, for any
\begin{align}  
a\in
\left[0,\frac{1-\eta}{s_0}\right]\cup \left(\frac{1}{s_0},\frac{1-\eta}{\sqrt{s_0}}\right]\, . \label{eq:Example1Claim}
\end{align}
\end{claim}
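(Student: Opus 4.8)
The plan is to prove the two halves separately: the Lasso lower bound ($\prob(\text{success})\le 1/2$ once $a>1/s_0$) by a conditional symmetrization argument built on the exact KKT characterization of Lemma~\ref{lem:supp-lasso-DET}, and the Gauss-Lasso upper bound by verifying the hypotheses of Theorem~\ref{thm:GLSuppRecovery} separately in the two regimes making up the interval in~\eqref{eq:Example1Claim}. Throughout I write $\mathbf 1$ for the all-ones vector on $S$, so that $\sign(\theta_{0,S})=\mathbf 1$, and I use the structural decomposition $x_p = a\,\bX_S\mathbf 1 + b\,g$ with $g=(\tX_{1,p},\dots,\tX_{n,p})^\sT\sim\normal(0,\id_n)$ independent of $\bX_S$ and $W$, and $b^2=1-a^2s_0$.

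For the Lasso bound, Lemma~\ref{lem:supp-lasso-DET} applied with $T=S$ and $z=\sign(\theta_0)$ shows that $\sign(\htheta^n)=\sign(\theta_0)$ forces the $p$-th coordinate of the dual-feasibility vector,
\[
\gamma_p \equiv \hSigma_{p,S}\hSigma_{S,S}^{-1}\mathbf 1 + \tfrac{1}{\lambda}\big(\hr_p-\hSigma_{p,S}\hSigma_{S,S}^{-1}\hr_S\big),
\]
to satisfy $|\gamma_p|\le 1$, in particular $\gamma_p\le 1$. The key step is to substitute the decomposition of $x_p$: a direct computation gives $\hSigma_{p,S}\hSigma_{S,S}^{-1}\mathbf 1 = as_0 + \tfrac{b}{n}g^\sT\bX_S\hSigma_{S,S}^{-1}\mathbf 1$ and $\hr_p-\hSigma_{p,S}\hSigma_{S,S}^{-1}\hr_S = \tfrac{b}{n}g^\sT(W-\bX_S\hSigma_{S,S}^{-1}\hr_S)$, so that conditionally on $(\bX_S,W)$ one has $\gamma_p = as_0 + g^\sT c$ for a fixed vector $c=c(\bX_S,W)$. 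Hence $\gamma_p$ is Gaussian with mean $as_0$ and symmetric about $as_0$; since $as_0>1$, symmetry yields $\prob(\gamma_p\le 1\mid \bX_S,W)\le\prob(\gamma_p< as_0\mid \bX_S,W)\le1/2$. Taking expectations, $\prob(\sign(\htheta^n)=\sign(\theta_0))\le\prob(\gamma_p\le1)\le1/2$ for every $\lambda$ and every $n$.

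For the Gauss-Lasso bound I would first identify the population set $\T*$ and signs $v_{0,\cdot}$ of Lemma~\ref{lem:LimitXi0}. When $a\le(1-\eta)/s_0$ one has $\Sigma_{S,S}=\id$, the only nonzero entry of $\Sigma_{S^c,S}\Sigma_{S,S}^{-1}\mathbf 1$ is $as_0$ at coordinate $p$, and the second KKT equation of Lemma~\ref{lem:infinity-supp} holds at small $\xi$; thus $\T*=S$, $v_{0,S}=\mathbf 1$, and GIC coincides with irrepresentability with slack $\eta$. Conditions~\ref{Condition:Minsigma} ($\sigma_{\min}(\id)=1$) and~\ref{Condition:ThetaMin} hold for suitable $c_2$ and $\theta_{\min}$, so Theorem~\ref{thm:GLSuppRecovery} gives $\hS=S$. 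When $1/s_0<a\le(1-\eta)/\sqrt{s_0}$, solving the zero-noise KKT system for small $\xi$ yields $\T*=S\cup\{p\}$ with $v_{0,S}=\mathbf 1$, $v_{0,p}=+1$, and $[\Sigma_{\T*,\T*}^{-1}v_{0,\T*}]_p=(1-as_0)/b^2$. The decisive point is that $\T*^c$ is uncorrelated with $\T*$, so $\Sigma_{\T*^c,\T*}=0$ and GIC~\eqref{eq:GIC-RAN} holds with left-hand side $0$; the binding constraint becomes Condition~\ref{Condition:Minsigma}, and a two-dimensional diagonalization of $\Sigma_{\T*,\T*}$ on $\mathrm{span}\{\mathbf 1/\sqrt{s_0},e_p\}$ gives eigenvalues $1\pm a\sqrt{s_0}$, hence $\sigma_{\min}(\Sigma_{\T*,\T*})=1-a\sqrt{s_0}\ge\eta$ exactly when $a\le(1-\eta)/\sqrt{s_0}$. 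Choosing $c_2$ so that $(as_0-1)/b^2\ge 2c_2$ and $\theta_{\min}$ above the detection level of~\eqref{eq:thetamin} verifies Condition~\ref{Condition:ThetaMin}, and Theorem~\ref{thm:GLSuppRecovery} again yields $\hS=S$ from $n=\Omega(s_0\log p)$ samples.

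The main obstacle is the exact determination of $\T*$ and $v_{0,\cdot}$ in the second regime: one must solve the population KKT system of Lemma~\ref{lem:infinity-supp}, confirm the fixed-point consistency $v_{0,p}=\sign\big(-[\Sigma_{\T*,\T*}^{-1}v_{0,\T*}]_p\big)=+1$ (which uses $as_0>1$), and check strict dual feasibility of the idle coordinates $\{s_0+1,\dots,p-1\}$, so that Lemma~\ref{lem:LimitXi0} genuinely returns $\T*=S\cup\{p\}$ rather than a larger or smaller set. Once $\T*$ and $v_{0,\cdot}$ are pinned down, the spectral computation and the symmetrization step are short. The omitted window $((1-\eta)/s_0,1/s_0]$ is not reached by this argument, since there $\T*=S$ while irrepresentability holds only with slack below $\eta$, so neither regime's verification applies; as noted, this appears to be an artifact of the proof technique.
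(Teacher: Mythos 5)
Your proposal is correct, and on the Gauss-Lasso half it takes essentially the same route as the paper: the paper likewise solves the population KKT system \eqref{eq:FirstExampleSol1}--\eqref{eq:FirstExampleSol2} at small $\xi$ to obtain $\htheta^{\infty}_p(\xi)=\xi(as_0-1)/(1-a^2s_0)$, hence $\T*=S\cup\{p\}$ and $v_{0,\T*}=u_{\T*}$ for $a\in[1/s_0,1/\sqrt{s_0})$, observes that $\Sigma_{\T*^c,\T*}=0$ so GIC holds with any slack, uses $\lambda_{\min}(\Sigma)=1-a\sqrt{s_0}$ for condition \ref{Condition:Minsigma}, and verifies \eqref{eq:ConditionTheta1}--\eqref{eq:ConditionTheta2} with the same quantities $(1-a)/(1-a^2s_0)$ and $(as_0-1)/(1-a^2s_0)$ that you compute. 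Where you genuinely depart from the paper is the Lasso-failure half. The paper dispatches it in a single sentence (``the irrepresentability condition does not hold unless $a\in[0,1/s_0)$, and hence the simple Lasso fails''), implicitly leaning on the known converse results of \cite{Wainwright2009LASSO}, and never actually derives the stated bound of $1/2$. You do derive it: the only-if direction of Lemma~\ref{lem:supp-lasso-DET} with $T=S$, $z=\sign(\theta_0)$ forces your dual-feasibility coordinate to satisfy $|\gamma_p|\le 1$, and the structural decomposition $x_p=a\,\bX_S u_S+b\,g$ with $g\sim\normal(0,\id_{n\times n})$ independent of $(\bX_S,W)$ gives $\gamma_p=as_0+\frac{b}{n}\,g^\sT\big[\bX_S\hSigma_{S,S}^{-1}u_S+\lambda^{-1}\Pi^{\perp}_{\bX_S}W\big]$, which conditionally on $(\bX_S,W)$ is Gaussian and symmetric about $as_0>1$; hence $\prob(\gamma_p\le 1\mid \bX_S,W)\le 1/2$ uniformly in $\lambda$ and $n$. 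This conditional-symmetrization argument is exactly the quantitative statement the claim asserts, and it makes the appendix self-contained, which the paper's citation-style argument does not; the price is a few extra lines. Two caveats, neither fatal: (i) your argument invokes Lemma~\ref{lem:supp-lasso-DET} with $T=S$, which requires $\hSigma_{S,S}$ to be invertible, true almost surely only when $n\ge s_0$, so ``any sample size $n$'' should carry that proviso (for $n<s_0$ recovery fails for other reasons, but your KKT step does not literally apply); (ii) near the endpoint $a\downarrow 1/s_0$ your requirement $2c_2\le (as_0-1)/b^2$ forces $c_2\to 0$ and the detection level degrades -- a defect shared with the paper, which asserts a uniform constant lower bound on $\big|[\Sigma^{-1}_{\T*,\T*}v_{0,\T*}]_p\big|$ in that regime without justification.
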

\begin{proof}
In order to prove that Gauss-Lasso correctly recovers the support of $\theta_0$, we
will show that  all the conditions of Theorem
\ref{thm:LassoSuppRecovery} and Theorem~\ref{thm:GLSuppRecovery}
hold with constants of order one, provided
Eq.~(\ref{eq:Example1Claim}) holds.
Vice versa, the irrepresentability condition does not hold unless $a\in
[0,1/s_0)$, and hence the simple Lasso fails outside this regime.

We now proceed to check the assumptions of Theorems
\ref{thm:LassoSuppRecovery} and \ref{thm:GLSuppRecovery}, while showing that
irrepresentability does not hold for $a\ge 1/s_0$.

\vspace{0.2cm}

\noindent{\bf Restricted eigenvalues.} We have $\lambda_{\rm min}(\Sigma) = 1-a\sqrt{s_0}$.
In particular, for any set $T \subseteq[p]$, we have $\lambda_{\rm min}(\Sigma_{T,T}) \ge 1-a\sqrt{s_0} \ge \eta$.
Also, for any constant $c_0 \ge 0$, $\kappa(s_0,c_0) \ge 1-a\sqrt{s_0} \ge \eta$. 

\vspace{0.2cm}

\noindent{\bf Irrepresentability condition.} We have $\Sigma_{SS}
=\id_{s_0\times s_0}$ and hence
$\|\Sigma_{S^cS}\Sigma_{SS}^{-1}\|_{\infty} = \|\Sigma_{p,S}\|_1 =
as_0$. Hence the irrepresentability condition holds 
only if $a\in [0,1/s_0)$. The corresponding irrepresentability
parameter is $\eta =1-as_0$. 

For large $s_0$, the condition is only satisfied for a small interval
in $a$, compared to the interval for which $\Sigma$ is positive
definite.

\vspace{0.2cm}

\noindent{\bf Generalized irrepresentability condition.} In order to
check this condition, we need to compute $\T*$ and $\v*$ defined as
per Lemma \ref{lem:LimitXi0}. We have $\htheta^{\infty}(\xi) =
\arg\min_{\theta\in\reals^p} G(\theta;\xi)$
where 
\begin{align*}
G(\theta;\xi) &\equiv
\frac{1}{2}\,\<(\theta-\theta_0),\Sigma(\theta-\theta_0)\>+\xi\|\theta\|_1\\
&=\frac{1}{2}\|\theta-\theta_0\|_2^2 +
a\<u_S,(\theta_S-\theta_{0,S})\>\theta_p+\xi\|\theta\|_1\, .
\end{align*}
From this expression, it is immediate to see that
$\htheta^{\infty}_i(\xi) = 0$ for $i\not\in S\cup\{p\}$. Further
$\htheta^{\infty}_{S\cup\{p\}}(\xi)$ satisfies
\begin{align}
\theta_S-\theta_{0,S}+a\theta_p u_S +\xi v_S &= 0\, ,\label{eq:FirstExampleSol1}\\
\theta_p + a\<u_S,(\theta_S-\theta_{0,S})\> + \xi v_p &= 0\, ,\label{eq:FirstExampleSol2}
\end{align}
with $v_S\in\partial\|\theta_S\|_1$ and $v_p\in\partial |\theta_p|$.
Since $\theta_{0,S}>0$, we have, from Eq.~(\ref{eq:FirstExampleSol1}),
\begin{align*}
\htheta^{\infty}_S=\theta_{0,S} - (a\htheta^{\infty}_p+\xi) u_S\, ,
\end{align*}
provided $(a\htheta^{\infty}_p+\xi)\le \theta_{\rm min}$. Substituting
in Eq.~(\ref{eq:FirstExampleSol2}) and solving for $\theta_p$, we get
\begin{align*}
\htheta_p^{\infty}(\xi) = \begin{cases}
0 & \mbox{if $a\in [0,1/s_0)$}\\
\Big(\frac{as_0-1}{1-a^2s_0} \Big)\xi &\mbox{if $a\in [1/s_0,1/\sqrt{s_0})$.}
\end{cases}
\end{align*}
This holds provided $(a\htheta^{\infty}_p+\xi)\le \theta_{\rm min}$,
i.e., if $\xi\le \xi_{*}\equiv \min(1,(1-a^2s_0)/(1-a))\, \theta_{\min}$.

Using the definition in Lemma \ref{lem:LimitXi0}, we have
\begin{align*}
\T* = \begin{cases}
S & \mbox{if $a\in [0,1/s_0)$}\\
S\cup \{p\} &\mbox{if $a\in [1/s_0,1/\sqrt{s_0})$,}
\end{cases}
\end{align*}
and $v_{0,\T*} = u_{\T*}$.

We can now check the generalized irrepresentability condition. For 
$a\in [0,1/s_0)$ we have
$\|\Sigma_{\T*^c,\T*}\Sigma_{\T*,\T*}^{-1} v_{0,\T*}\|_{\infty}
=\|\Sigma_{S^c,S}\Sigma_{S,S}^{-1}u_{S}\|_{\infty}  = a s_0$, and
therefore the generalized irrepresentability condition is satisfied
with parameter $\eta = 1-as_0$. For 
$a\in [1/s_0,1/\sqrt{s_0})$, we have $\|\Sigma_{\T*^c,\T*}\Sigma_{\T*,\T*}^{-1}v_{0,\T*}\|_{\infty}
=0$. 

We therefore conclude that, for any fixed $\eta\in(0,1]$, the generalized
irrepresentability condition with parameter $\eta$ is satisfied for 
\begin{align*}
a\in \Big[0,\frac{1-\eta}{s_0}\Big]\cup
\Big[\frac{1}{s_0},\frac{1}{\sqrt{s_0}}\Big)\, ,
\end{align*}
a significant larger domain than for simple irrepresentability.

\vspace{0.2cm}

\noindent\emph{Minimum entry condition.} 
For $a\in [0,1/s_0)$, we have $\T*=S$ and it is therefore only
necessary to check Eq.~(\ref{eq:ConditionTheta1}). Since
$[\Sigma_{\T*,\T*}^{-1}v_{0,\T*}]_i=1$,  this reads
\begin{align*}
|\theta_{0,i}|\ge \Big(c_2+\frac{3}{2}\Big)\lambda =
C\sigma\sqrt{\frac{\log p}{n}}\, ,
\end{align*}
with $C$ a constant.

For $a\in (1/s_0, (1-\eta)/\sqrt{s_0}]$, we have $\T*=S\cup\{p\}$. 
A straightforward calculation shows that
\begin{align*}
\big|[\Sigma_{\T*,\T*}^{-1}v_{0,\T*}]_i\big| & =
\frac{1-a}{1-a^2s_0}\,,
\;\;\;\;\;\;
\mbox{for }i\in S\, ,\\
\big|[\Sigma_{\T*,\T*}^{-1}v_{0,\T*}]_{p}\big| & =
\frac{as_0-1}{1-a^2s_0} \,.\\
\end{align*}
%

It is not hard to show for all $a$ satisfying Eq.~(\ref{eq:Example1Claim}), we have 
\begin{align*}
\big|[\Sigma_{\T*,\T*}^{-1}v_{0,\T*}]_i\big| \le \frac{1}{1-(1-\eta)^2}\,  \;\;\;\mbox{ for }
i\in S,\;\;\;\;\;
\big|[\Sigma_{\T*,\T*}^{-1}v_{0,\T*}]_p\big| \ge C\, ,
\end{align*}
for some constant $C> 0$.
It therefore follows that condition (\ref{eq:ConditionTheta1}) holds
if $|\theta_{0,i}|\ge
C'\sigma\sqrt{\log p/n}$ and condition (\ref{eq:ConditionTheta2})
holds for $c_2 = C/2$.
\end{proof}


\bibliographystyle{amsalpha}
\bibliography{all-bibliography}
\addcontentsline{toc}{section}{References}

\end{document}